\numberwithin{equation}{section}
\newcommand{\be}{\beta}
\newcommand{\1}{\mathbf{1}}
\newcommand{\cq}{\mathcal{Q}}
\newcommand{\cob}{\color{black}}
\newcommand{\cop}{\color{blue}}
\newcommand{\R}{\mathbb R}
\newcommand{\Z}{\mathbb Z}
\newcommand{\T}{\mathbb T}
\newcommand{\cac}{\mathcal C}
\newcommand{\cf}{\mathcal F}
\newcommand{\ch}{\mathcal H}
\newcommand{\ci}{\mathcal I}
\newcommand{\cj}{\mathcal J}
\newcommand{\ck}{\mathcal K}
\newcommand{\cl}{\mathcal L}
\newcommand{\cm}{\mathcal M}
\newcommand{\cp}{\mathcal P}
\newcommand{\cpti}{\widetilde{\mathcal{P}}}
\newcommand{\al}{\alpha}
\newcommand{\ga}{\gamma}
\newcommand{\la}{\lambda}
\newcommand{\vp}{\varphi}
\newtheorem{theorem}{Theorem}[section]
\newtheorem{corollary}[theorem]{Corollary}
\newtheorem{lemma}[theorem]{Lemma}
\newtheorem{notation}[theorem]{Notation}
\newtheorem{proposition}[theorem]{Proposition}
\theoremstyle{remark}
\newtheorem{remark}[theorem]{Remark}
    \pgfmathsetlength{\pgf@xb}{\pgfkeysvalueof{/pgf/outer xsep}}%
    \pgfmathsetlength{\pgf@yb}{\pgfkeysvalueof{/pgf/outer ysep}}%
\definecolor{gr}{rgb}   {0.,   0.69,   0.23 }
\definecolor{bl}{rgb}   {0.,   0.5,   1. }
\definecolor{mg}{rgb}   {0.85,  0.,    0.85}
\definecolor{yl}{rgb}   {0.8,  0.7,   0.}
\definecolor{or}{rgb}  {0.7,0.2,0.2}
\colorlet{symbols}{black!90!black}
\colorlet{symbolsb}{black!90!black}
\colorlet{testcolor}{green!60!black}
\def\drawx{\draw[-,solid] (-3pt,-3pt) -- (3pt,3pt);\draw[-,solid] (-3pt,3pt) -- (3pt,-3pt);}
\tikzset{
	root/.style={circle,fill=testcolor,inner sep=0pt, minimum size=2mm},
	dot/.style={circle,fill=black,inner sep=0pt, minimum size=1mm},
	var/.style={circle,fill=black!10,draw=black,inner sep=0pt, minimum size=2mm},
	dotred/.style={circle,fill=black!50,inner sep=0pt, minimum size=2mm},
	generic/.style={semithick,shorten >=1pt,shorten <=1pt},
	dist/.style={ultra thick,draw=testcolor,shorten >=1pt,shorten <=1pt},
	testfcn/.style={ultra thick,testcolor,shorten >=1pt,shorten <=1pt,<-},
	testfcnx/.style={ultra thick,testcolor,shorten >=1pt,shorten <=1pt,<-,
		postaction={decorate,decoration={markings,mark=at position 0.6 with {\drawx}}}},
	kprime/.style={semithick,shorten >=1pt,shorten <=1pt,densely dashed,->},
	kprimex/.style={semithick,shorten >=1pt,shorten <=1pt,densely dashed,->,
		postaction={decorate,decoration={markings,mark=at position 0.4 with {\drawx}}}},
	kernel/.style={semithick,shorten >=1pt,shorten <=1pt,->},
	multx/.style={shorten >=1pt,shorten <=1pt,
		postaction={decorate,decoration={markings,mark=at position 0.5 with {\drawx}}}},
	kernelx/.style={semithick,shorten >=1pt,shorten <=1pt,->,
		postaction={decorate,decoration={markings,mark=at position 0.4 with {\drawx}}}},
	kernel1/.style={->,semithick,shorten >=1pt,shorten <=1pt,postaction={decorate,decoration={markings,mark=at position 0.45 with {\draw[-] (0,-0.1) -- (0,0.1);}}}},
	kernel2/.style={->,semithick,shorten >=1pt,shorten <=1pt,postaction={decorate,decoration={markings,mark=at position 0.45 with {\draw[-] (0.05,-0.1) -- (0.05,0.1);\draw[-] (-0.05,-0.1) -- (-0.05,0.1);}}}},
	kernelBig/.style={semithick,shorten >=1pt,shorten <=1pt,decorate, decoration={zigzag,amplitude=1.5pt,segment length = 3pt,pre length=2pt,post length=2pt}},
	rho/.style={dotted,semithick,shorten >=1pt,shorten <=1pt},
	renorm/.style={shape=circle,fill=white,inner sep=1pt},
	labl/.style={shape=rectangle,fill=white,inner sep=1pt},
	xi/.style={circle,fill=symbols!10,draw=symbols,inner sep=0pt,minimum size=1.2mm},
	xiblack/.style={circle,fill=symbolsb,draw=symbolsb,inner sep=0pt,minimum size=1.2mm},
	xix/.style={crosscircle,fill=symbols!10,draw=symbols,inner sep=0pt,minimum size=1.2mm},
	xib/.style={circle,fill=symbols!10,draw=symbols,inner sep=0pt,minimum size=1.6mm},
	xibx/.style={crosscircle,fill=symbols!10,draw=symbols,inner sep=0pt,minimum size=1.6mm},
	not/.style={circle,fill=symbols,draw=symbols,inner sep=0pt,minimum size=0.5mm},
	notblack/.style={circle,fill=symbolsb,draw=symbolsb,inner sep=0pt,minimum size=0.5mm},
	>=stealth,
	}
\def\DeclareSymbol#1#2#3{\expandafter\gdef\csname MH@symb@#1\endcsname{\tikz[baseline=#2,scale=0.15,draw=symbols]{#3}}\expandafter\gdef\csname MH@symb@#1s\endcsname{\scalebox{0.7}{\tikz[baseline=#2,scale=0.15,draw=symbols]{#3}}}}
\def\<#1>{\csname MH@symb@#1\endcsname}
\date{\today}
\title{On the 1d stochastic Schr{\"o}dinger product}
\author{Aurélien Deya}
\thanks{\textit{Adress.} Université de Lorraine, CNRS, IECL, F-54000 Nancy, France; aurelien.deya@univ-lorraine.fr}
\keywords{Stochastic Schr{\"o}dinger equation, fractional noise, renormalization}
\begin{document}

\begin{abstract}
We exhibit various restrictions about the wellposedness of the Schr{\"o}dinger product
$$\cl:z \longmapsto -\imath \int_0^t e^{\imath  s {\cop \partial^2_x}}\big( z_s\cdot \Psi_s\big) ds  $$ 
where $\Psi$ refers to the so-called linear solution of the stochastic Schr{\"o}dinger problem. We focus more specifically on the case where $\Psi$ satisfies 
\begin{equation}\label{starting-equation-abstract}
(\imath \partial_t-\partial^2_x)\Psi=\dot{B}, \quad \Psi_0=0,\quad \quad t\in \R, \ x\in \mathbb{T},
\end{equation}
where $\dot{B}$ is a white noise in space with fractional time covariance of index $H>\frac12$.

\smallskip

{\cop As an consequence of our analysis, we obtain that if $H$ is close to $\frac12$ (that is $\dot{B}$ is close to a space-time white noise), then it is essentially impossible to treat the stochastic NLS problem 
\begin{equation*}
(\imath \partial_t-\partial^2_x)u= |u|^2+\dot{B}, \quad u_0=0,\quad \quad  t\in \R, \ x\in \mathbb{T},
\end{equation*}
using only a first-order expansion of the solution (\enquote{$u=\Psi+z$}).}

\end{abstract}

\maketitle

\section{Presentation of the problem}
\color{blue}
In this paper, we propose to point out some limitations in the analysis of (local) wellposedness for the stochastic quadratic Schr{\"o}dinger equation
\begin{equation}\label{general-deter-nls}
(\imath \partial_t-{\cop \partial^2_x})u=|u|^2+\dot{B}, \quad u(0,.)=0, \quad t\in \R, \ x\in \T,
\end{equation}
where $\dot{B}$ is a stochastic noise in a family to be specified. 

\smallskip

This model in fact belongs to the broader class of nonlinear stochastic Schr{\"o}dinger equations
\begin{equation}\label{general-deter-nls-gene}
(\imath \partial_t-{\cop \partial^2_x})u=\lambda\, u^{p} \overline{u}^q+\dot{B}, \quad u(0,.)=0, \quad t\in \R, \ x\in \T,
\end{equation}
where $p,q\geq 0$ are two fixe integers and $\lambda$ is a real parameter.\color{black}

\smallskip

The study of noise influence on NLS models is a recurring topic in the SPDE literature. The most widely covered situation - by far - is that of a white noise in time with suitably colored spatial covariance. Provided such a noise is regular enough, the solution of \eqref{general-deter-nls} is expected to take values in a space of functions (almost surely); the powerful It{\^o} integration tools then become available, which even opens the possibility to treat multiplicative perturbations (see e.g. \cite{debou-debu-2003,cheung-mosincat,oh-okamoto,oh-popovnicu-wang} for additive-noise models, and \cite{BRZ,debou-debu-1999,BM,hornung} for a multiplicative noise).

\smallskip

In contrast with this \enquote{functional} case, we are here interested in rougher \enquote{distributional} situations - for which literature turns out to be much more scarce. Namely, in the continuation of \cite{DFT}, we will focus on examples for which the equation can only be handled in a space of \textit{negative-order distributions}, and for which \textit{renormalization procedures} are also required.

\smallskip

In order to implement these ideas, let us consider, throughout the analysis, the case of a white-in-space fractional-in-time noise. Thus, for some (fixed) index $H\in (\frac12,1)$, $\dot{B}$ is here defined as the centered Gaussian noise with covariance given by the formula
\begin{equation}
\mathbb{E}\Big[\dot{B}(s,x)\overline{\dot{B}(t,y)}\Big]=|t-s|^{2H-2} \delta_{\{x=y\}}
\end{equation}
or otherwise stated: for all test-functions $\vp,\psi$ on $\R\times \T$,
\begin{equation}
\mathbb{E}\Big[\langle \dot{B},\vp\rangle \overline{\langle \dot{B},\psi\rangle}\Big]=\int_{\mathbb{T}} dx \int_{\R^2} \frac{ds dt}{|t-s|^{2-2H}} \, \vp(s,x)\overline{\psi(t,x)}.
\end{equation}
Such a noise can be equivalently described through the expansion
\begin{equation}\label{repres-noise}
\dot{B}(t,x)=\sum_{k\in \Z} \dot{\beta}^{(k)}_t  e^{-\imath k x},
\end{equation}
where the $\dot{\beta}^{(k)}$'s are independent (real-valued) fractional noises on $\R$, with common Hurst index $H>\frac12$.

\

Now, it is a classical fact that the expected regularity of the solution $u$ in \eqref{general-deter-nls} is prescribed by that of the associated linear problem
\begin{equation}\label{linear-pb}
(\imath \partial_t-{\cop \partial^2_x})\Psi=\dot{B}, \quad \Psi(0,.)=0, \quad t\in \R, \ x\in \T.
\end{equation}

We will thus rely on the following preliminary lemma to identify the distributional cases of interest.

\begin{lemma}\label{lem:regu-luxo}
In the above setting, the following assertions hold true.

\smallskip

\noindent
$(i)$ If $H>\frac34$, then for every $T>0$, one has $\Psi\in L^2([0,T]\times \T)$ almost surely.

\smallskip

\noindent
$(ii)$ If $H\leq \frac34$, then for every $T>0$, one has $\mathbb{E}\Big[\big\|\Psi\big\|_{L^2([0,T]\times \mathbb{T})}^2\Big] = \infty$.

\end{lemma}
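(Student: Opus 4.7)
The plan is to diagonalize on $\T$ via Fourier series and reduce to second-moment estimates on each Fourier mode. Writing $\Psi(t,x)=\sum_{k\in\Z}\Psi^{(k)}(t)e^{-\imath k x}$, Duhamel's formula on each mode yields
$$
\Psi^{(k)}(t)=-\imath\int_0^t e^{\imath k^2(t-s)}\,d\beta^{(k)}(s),
$$
and the standard covariance of a fractional noise of index $H$ gives
$$
\mathbb E\bigl[|\Psi^{(k)}(t)|^2\bigr]=c_H\int_0^t\!\!\int_0^t e^{\imath k^2(r-s)}|s-r|^{2H-2}\,ds\,dr.
$$
By Parseval on $\T$, $\mathbb E[\|\Psi\|_{L^2([0,T]\times\T)}^2]=2\pi\sum_{k\in\Z}\int_0^T\mathbb E[|\Psi^{(k)}(t)|^2]\,dt$, so everything reduces to a sharp large-$k$ asymptotic for the displayed double integral.

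To pin down the correct rate in $k$, I would move to the frequency side in time: recognizing the double integral as $\langle f_{k,t},|\cdot|^{2H-2}\ast f_{k,t}\rangle$ with $f_{k,t}(s)=\mathbf 1_{[0,t]}(s)e^{\imath k^2 s}$ and using that the Fourier transform of $|\cdot|^{2H-2}$ is a multiple of $|\xi|^{1-2H}$, Plancherel followed by the shift $\eta=\xi-k^2$ produces
$$
\mathbb E\bigl[|\Psi^{(k)}(t)|^2\bigr]=c'_H\int_\R\frac{\sin^2(\eta t/2)}{\eta^2}\,|\eta+k^2|^{1-2H}\,d\eta.
$$
This is the key formula: the Fej\'er kernel $\sin^2(\eta t/2)/\eta^2$ concentrates on scale $1/t$ near $\eta=0$, while on this region the weight $|\eta+k^2|^{1-2H}$ is of order $k^{2-4H}$. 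A standard two-sided bound then yields $\mathbb E[|\Psi^{(k)}(t)|^2]\asymp_t|k|^{2-4H}$ for $|k|\geq 1$ and $t$ bounded away from $0$: the upper bound splits $\eta$ into $\{|\eta|\leq k^2/2\}$ and its complement, while the lower bound restricts to $|\eta|\leq 1/t$, where $|\eta+k^2|\leq 2k^2$ and $1-2H<0$.

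Summing, $\mathbb E[\|\Psi\|_{L^2([0,T]\times\T)}^2]\asymp\sum_{|k|\geq 1}|k|^{2-4H}$ (the zero mode contributes only a finite $T^{2H+1}$ term), and this series is finite iff $2-4H<-1$, i.e.\ iff $H>\tfrac34$. Claim (ii) is then immediate from the divergence of the expectation, while claim (i) follows from its finiteness since $\mathbb E[\|\Psi\|_{L^2}^2]<\infty$ implies $\Psi\in L^2([0,T]\times\T)$ almost surely. The main obstacle is establishing the sharp two-sided control $\mathbb E[|\Psi^{(k)}(t)|^2]\asymp|k|^{2-4H}$: the naive estimate $\mathbb E[|\Psi^{(k)}(t)|^2]\leq\mathbb E[|\beta^{(k)}(t)|^2]\lesssim t^{2H}$ gives only $O(1)$ in $k$ and completely misses the threshold $\tfrac34$, while a crude oscillation argument on the kernel $e^{\imath k^2\cdot}$ would yield $k^{-2H}$, also off. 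The correct rate $k^{2-4H}$ emerges only via the shifted Plancherel identity, which transfers the oscillation of the Duhamel kernel onto the singular weight $|\eta+k^2|^{1-2H}$.
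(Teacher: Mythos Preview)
Your argument is correct and takes a genuinely different route from the paper. The paper stays entirely in the time domain: after writing
\[
\mathbb{E}\big[|\Psi_k(t)|^2\big]=\int_0^t\!\!\int_0^t \frac{e^{-\imath k^2(s-s')}}{|s-s'|^{2-2H}}\,ds\,ds'
= 2\int_0^t ds\int_0^s \frac{\cos(k^2 r)}{r^{2-2H}}\,dr,
\]
it rescales $r\mapsto r/k^2$ to extract the prefactor $|k|^{2-4H}$ and then bounds the remaining oscillatory integral $\int_1^{k^2 s}\cos(r)\,r^{2H-2}\,dr$ uniformly via an integration by parts. This handles only item~$(i)$; for the lower bound in~$(ii)$ the paper refers to \cite{DFT}. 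Your Plancherel step, by contrast, converts the time-side double integral into the single frequency integral $\int_\R \eta^{-2}\sin^2(\eta t/2)\,|\eta+k^2|^{1-2H}\,d\eta$, and the Fej\'er-kernel/shifted-weight structure then yields the sharp two-sided bound $\asymp_t |k|^{2-4H}$ in one stroke, covering both $(i)$ and $(ii)$ simultaneously. The trade-off is that your argument invokes the distributional Fourier identity $\mathcal{F}(|\cdot|^{2H-2})=c\,|\cdot|^{1-2H}$ (valid here since $H>\tfrac12$), whereas the paper's time-domain argument is more elementary but longer and incomplete for the divergence claim.
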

The statement of item $(ii)$ corresponds to a slightly extended version of \cite[Proposition 2.1, item (ii)]{DFT} (take $H_0=H$ and $H_1=\frac12$ therein), and it can be proved with the same arguments. As for item $(i)$, we have included a sketch of its proof in Section \ref{sec:proof-regu-luxo}, for the sake of clarity.

\

Based on the result of Lemma \ref{lem:regu-luxo}, we henceforth focus on the case where $H\in (\frac12,\frac34]$, that is on situations where the solution $\Psi$ of \eqref{linear-pb}, and accordingly the solution $u$ of \eqref{general-deter-nls}, cannot be defined as functions.

\smallskip

Before we describe the so-called \textit{first-order strategy} at the core of our investigations, let us introduce a few notations and spaces that will be used throughout the paper.

\

\begin{notation}

From now on and for the rest of the paper:

\smallskip

\noindent
$\bullet$ We use the classical convention $\1_{[0,a]}:=-\1_{[a,0]}$ if $a<0$.

\smallskip

\noindent
$\bullet$ We fix a smooth symmetric function $\chi:\R\to [0,1]$ such that $\chi\equiv 1$ on $[-1,1]$ and $\text{supp}(\chi)\subset [-\frac32,\frac32]$.

\smallskip

\noindent
$\bullet$ For every function $f:\R\to \R$ and every $\la\in \R$, we denote the Fourier transform (in time) as 
$$\color{blue}\cf(f)(\la):=\int_{\R}dt \, e^{-\imath \la t}f(t) .\color{black}$$

\smallskip

\noindent
$\bullet$ We denote by $\ci_\chi$ the local integration operator, that is for all $f:\R\to\R$ and $t\in \R$,
\begin{equation}\label{not-ci-chi}
\ci_\chi f(t):= - \imath \chi(t) \int_0^t ds \, \chi(s)f(s).
\end{equation}
We also denote by $\ci_\chi(.,.)$ the Fourier kernel associated with this operator, {\cop given by
\begin{equation}\label{kernel-int-simple}
\ci_\chi(\la,\la'):=\int_{\R} dt\, e^{-\imath \la t}\chi(t)\int_{\R} ds \, e^{\imath \la' s} \chi(s)\1_{[0,t]}(s)
\end{equation}
and such that}
\begin{equation}\label{i-chi-fou}
\cf\big(\ci_\chi f\big)(\la)=\int_{\R} d\la_1 \, \ci_\chi(\la,\la_1) \cf(f)(\la_1).
\end{equation}

\noindent
$\bullet$ For every function $\vp:\mathbb{T}\to \R$ and every $k\in \mathbb{Z}$, we set
\begin{equation}\label{not:fouri-space}
\vp_k:=\int_{\T}dx \, e^{-\imath k x}\vp(x)=\frac1{2\pi}\int_0^{2\pi}dx \, e^{-\imath k x}\vp(x)  .
\end{equation}

\smallskip

\noindent
$\bullet$ \color{blue} We consider the scale of Bourgain spaces 
\begin{equation}\label{bourgain-sp}
\{Z^{c,b}, \, c\in [0,1), \, b \in (\frac12,1)\}
\end{equation}
defined through the norm
\begin{equation}\label{defi:x-b-c}
\big\|z\big\|_{Z^{c,b}}^2:=\sum_{k\in \Z}\langle k\rangle^{2c}\int_{\R} d\la  \, \langle \la\rangle^{2b} \big| \cf(z_k)(\la)\big|^2,
\end{equation}
where the notation $\langle .\rangle$ refers to $\langle \la\rangle:=(1+|\la|^2)^{\frac12}$ (see Remark \ref{rk:bourgain-spaces} for further details about the Bourgain-space terminology).

\smallskip

We will also be led to consider the spaces
\begin{equation}\label{defi:zti-c-b}
\widetilde{Z}^{c,b}:=Z^{c,0} \cap Z^{0,b},
\end{equation}
naturally associated with the norm
\begin{equation}\label{defi:zti-c,b-norm}
\big\|z\big\|_{\widetilde{Z}^{c,b}}^2:=\sum_{k\in \Z}\int_{\R} d\la  \, \big\{\langle k\rangle^{2c}+\langle \la\rangle^{2b}\big\} \big| \cf(z_k)(\la)\big|^2,
\end{equation}
noting that $Z^{c,b} \subset \widetilde{Z}^{c,b}$ for all $c\in [0,1), b\in (\frac12,1)$.
\color{black}

\smallskip

\noindent
$\bullet$ We denote by $(\Omega,\mathfrak{F},\mathbb{P})$ the complete probability space which accommodates the fractional noise $\dot{B}$ under consideration (or equivalently the one-parameter fractional noises $(\dot{\beta}^{(k)})$ in \eqref{repres-noise}).

\smallskip

\noindent
$\bullet$ For any Banach space $E$, we define
$$L^{\infty,-}(\Omega;E):=\bigcap_{p\geq 1} L^p(\Omega;E).$$

\noindent
$\bullet$ Last but not least, we denote by $\dot{B}^{(n)}$ the (spatial) regularization of $\dot{B}$ derived from \eqref{repres-noise} through the formula
\begin{equation}\label{defi:dot-b-n}
\dot{B}^{(n)}(t,x)=\sum_{k: \, \langle k\rangle \leq 2^n} \dot{\beta}^{(k)}_t  e^{-\imath k x}.
\end{equation}

\end{notation}

\

\subsection{The first-order strategy and related questions}\label{subsec:first-ord-stra}

\

\smallskip

Recall that we concentrate here on situations where the solution $u$ of \eqref{general-deter-nls} is not expected to be a function, so that there is no a priori interpretation of the product term {\cop $|u|^2$}. 

\smallskip

{\cop To overcome this issue, a natural idea is to consider a first-order expansion of the solution. This strategy has become classical in the context of stochastic parabolic equations (it is often referred to as the \enquote{Da Prato–Debussche trick}, following its introduction in \cite{daprato-debussche}), as well as in the dispersive setting with random initial data (see, for instance, \cite{bourgain, McKean}). The reader may also consult \cite{DFT, FOW} for some early applications to stochastic NLS models of the form \eqref{general-deter-nls-gene}.}

\smallskip

We propose to explain the details behind this approach at the level of the approximated equation first.

\smallskip

For $\dot{B}^{(n)}$ defined as in \eqref{defi:dot-b-n}, let $u^{(n)}$ be the (well-defined) solution of the approximated equation
\begin{equation}\label{general-deter-nls-approx}
(\imath \partial_t-{\cop \partial^2_x})u^{(n)}={\cop | u^{(n)}|^2}+\dot{B}^{(n)}, \quad u^{(n)}(0,.)=0, \quad t\in \R, \ x\in \T.
\end{equation}
With the notation in \eqref{not-ci-chi}, and setting 
\begin{equation}\label{defi-luxo}
\<Psi>^{(n)}:=\ci_\chi \big(e^{\imath  .{\cop \partial^2_x}}\dot{B}^{(n)}\big) ,  \quad v^{(n)}(t):=e^{\imath  t {\cop \partial^2_x}} u^{(n)}(t),
\end{equation}
it is easy to check that we can (locally) recast \eqref{general-deter-nls-approx} under the mild formulation
\begin{equation}\label{basic-equation0-quad}
v^{(n)}(t)= \<Psi>^{(n)}(t)+{\cop \ci_\chi \cm\big(v^{(n)},v^{(n)}\big)(t)},
\end{equation}
where the product operator ${\cop \cm}$ is defined in Fourier coordinates by the formula {\cop
\begin{equation} \label{cm}
\cm(v,w)_k(t):=\sum_{k_1} e^{\imath t\Omega_{k,k_1-k}} v_{k_1}(t) \overline{w_{k_1-k}(t)}
\end{equation}
with \enquote{resonant function} $\Omega_{k,k_1-k}$ given by
$$\Omega_{k,k_1-k}:=-k^2+k_1^2-(k-k_1)^2=2k(k_1-k).$$
}

\smallskip

\color{blue}

\begin{remark}\label{rk:bourgain-spaces}
Independently of the presence of noise, the topology most commonly used to study equation \eqref{general-deter-nls-approx} is that of the Bourgain spaces $X^{c,b}$, defined via the norm
\begin{equation}\label{bourg}
\| u\|^2_{X^{c,b}}:=\sum_k \, \langle k\rangle^{2c}\int d\la \, \langle \lambda -|k|^2\rangle^{2b}  \big|\cf(u_k)(\la)\big|^2.
\end{equation}
We adopt the same convention here, expressed this time at the level of the transformed process $v$ (see \eqref{defi-luxo}). Indeed, the norm $\big\|.\big\|_{Z^{c,b}}$ introduced in \eqref{defi:x-b-c} - and used throughout what follows - can also be written as
\begin{equation*}
\|v\|_{Z^{c,b}}^2= \|e^{- \imath t \Delta}v\|_{X^{c,b}}^2.
\end{equation*}
Recall also that the fundamental embedding
$$X^{c,b} \subset \cac(\R;H^c(\T))$$
which ensures in particular continuity of the solution at time $0$, is only guaranteed if $b>\frac12$ (see \cite[Corollary 2.10]{tao-book}). This justifies our restriction to $b>\frac12$ in \eqref{bourgain-sp}.
\end{remark}

\color{black}

\

In light of \eqref{basic-equation0-quad}, a natural first-order transformation of the problem simply consists in the consideration of the difference process 
$$z^{(n)}:=v^{(n)}-\<Psi>^{(n)},$$
which now satisfies the equation
\begin{align}
z^{(n)}&= \ci_\chi {\cop \cm\big(z^{(n)}+\<Psi>^{(n)},z^{(n)}+\<Psi>^{(n)}\big)}\nonumber\\
&{\cop =\ci_\chi \cm\big(z^{(n)},z^{(n)}\big)+\ci_\chi \cm\big(\<Psi>^{(n)},\<Psi>^{(n)}\big)+\ci_\chi \cm\big(z^{(n)},\<Psi>^{(n)}\big)+\ci_\chi \cm\big(\<Psi>^{(n)},z^{(n)}\big).}\label{equa-z-n}
\end{align}

\smallskip

The whole point of this change of perspective can be roughly summed up as follows: taking the action of the operator {\cop $\ci_\chi \cm$} into account, we \textit{hope} the difference-process $z^{(n)}$ to be sufficiently regular (and at least more regular than $\<Psi>^{(n)}$ or $v^{(n)}$) so that the product terms in \eqref{equa-z-n} can now make sense as $n\to\infty$.

\

\cop

The so-called first-order strategy is now based on the implementation of a fixed-point argument for equation \eqref{equa-z-n}, by establishing control over each term within a yet-to-be-specified space in the scale $\{Z^{c,b}\}$ (see \eqref{defi:x-b-c}).

\

\begin{remark}\label{rk:bourgain-spaces}
Independently of the presence of noise, the topology most commonly used to study equation \eqref{general-deter-nls-approx} is that of the Bourgain spaces $X^{c,b}$, defined via the norm
\begin{equation}\label{bourg}
\| u\|^2_{X^{c,b}}:=\sum_k \, \langle k\rangle^{2c}\int d\la \, \langle \lambda -|k|^2\rangle^{2b}  \big|\cf(u_k)(\la)\big|^2.
\end{equation}
We adopt the same convention here, expressed this time at the level of the transformed process $v$ (see \eqref{defi-luxo}). Indeed, the norm $\big\|.\big\|_{Z^{c,b}}$ introduced in \eqref{defi:x-b-c} - and used throughout what follows - can also be written as
\begin{equation*}
\|v\|_{Z^{c,b}}^2= \|e^{- \imath t \Delta}v\|_{X^{c,b}}^2.
\end{equation*}
Recall also that the fundamental embedding
$$X^{c,b} \subset \cac(\R;H^c(\T))$$
which ensures in particular continuity of the solution at time $0$, is only guaranteed if $b>\frac12$ (see e.g. \cite[Corollary 2.10]{tao-book}). This justifies our restriction to $b>\frac12$ in \eqref{bourgain-sp}.
\end{remark}

\color{black}

\

{\cop Returning to equation \eqref{equa-z-n}, let us now observe that the continuity of the (purely deterministic) mapping
$$Z^{c,b} \rightarrow Z^{c,b}, \quad z \mapsto \ci_\chi \cm\big(z,z\big)$$
is ensured by the sole condition $b>\frac12$. This can be seen as a consequence of Bourgain's fundamental estimates in \cite{Bou1} (see the proof of \cite[Proposition 3.5]{DFT} for details). We may therefore focus our attention on the analysis of the three stochastic components in \eqref{equa-z-n}, namely the (Schr{\"o}dinger) products involving the linear solution $\<Psi>^{(n)}$.} In this context, the following important observation immediately comes to mind.

\

\noindent
\textbf{Observation.} In order to guarantee the convergence of equation \eqref{equa-z-n}, one should \textit{at least} be able to address the following two problems:

\smallskip

\noindent
$\mathbf{(P1)}$ Does the sequence of - explicit - stochastic processes
\begin{equation}\label{multil-tree}
{\cop \ci_\chi \cm\big(\<Psi>^{(n)},\<Psi>^{(n)}\big)}
\end{equation}
converge in $L^{\infty,-}(\Omega;{\cop Z^{c,b}})$ as $n\to \infty$ (for $b,c$ to be determined) ?

\smallskip

\noindent
$\mathbf{(P2)}$ Does the stochastic Schr{\" o}dinger product operation
\begin{equation}\label{produ-ope}
{\cop \cl^{(n)}:z \longmapsto \ci_\chi \cm\big(z,\<Psi>^{(n)}\big)}
\end{equation}
converge as a \textit{random operator} from ${\cop Z^{c,b}}$ to ${\cop Z^{c,b}}$ as $n\to \infty$ (for $b,c$ to be determined) ?

\

The two above problems $\mathbf{(P1)}$ and $\mathbf{(P2)}$ will be our guidelines in the subsequent study. However, for a proper examination of these questions in the rough stochastic framework, the above formulations both need to be refined, which is the purpose of the two next sections.

\subsubsection{About renormalization}\label{subsec:renorm-cond} 
Owing to the pathwise irregularity of $\dot{B}$ (especially as $H$ gets close to $\frac12$), the convergence of the process in \eqref{multil-tree} can only be achieved through a renormalization trick, as developed in Section \ref{subsec:schro-prod-tree} below. Consequently, equation \eqref{equa-z-n} can only be handled in a renormalized sense. 

\smallskip

Nevertheless, in order not to deviate from the existing SPDE literature (especially the known results in the heat or wave settings), we shall impose this renormalization procedure to be \textit{explicit}. In other words, the transformation should only give rise to explicit renormalizing constants at the level of the approximated equation.

\smallskip

In order to achieve this objective, our strategy will obey the following two rules:

\smallskip

\noindent
$\mathbf{(C1)}$ We \textit{do allow} the use of (natural) renormalization procedures for the explicit process 
$${\cop \ci_\chi \cm\big(\<Psi>^{(n)},\<Psi>^{(n)}\big)}.$$ 

\smallskip

\noindent
$\mathbf{(C2)}$ We \textit{do not allow} any renormalization (and so any deformation) for the general product operation
$$\cl^{(n)}:z \longmapsto {\cop \ci_\chi \cm\big(z,\<Psi>^{(n)}\big)}.$$

\

\begin{remark}
Observe that the condition $\mathbf{(C2)}$ immediately rules out any \enquote{a priori deformation of the product}, such as a Skorohod-type interpretation of the problem (see e.g. \cite{CDOT2}), or the renormalization trick implemented in \cite{FOW} for the {\cop stochastic} cubic model, namely:
$$
(\imath \partial_t-{\cop \partial^2_x})u=\bigg( |u|^2-\int_{\T} |u|^2\bigg)u+\dot{B}, \quad u(0,.)=\Phi, \quad t\in \R, \ x\in \T.
$$
It is clear indeed that the correction term $\int_{\T} |u|^2$ derived from the latter transformation is not explicit, i.e. it is not explicitly defined in terms of $\dot{B}$ {\cop (this observation also applies to the renormalization used in \cite{bourgain} for a deterministic dynamics with random initial data)}.

\end{remark}

\

With these considerations in mind, and following the condition $\mathbf{(C1)}$, we can refine the formulation of the problem $\mathbf{(P1)}$ as follows:

\smallskip

\noindent
$\mathbf{(P1')}$ Can the sequence of stochastic processes
\begin{equation*}
{\cop \ci_\chi \cm\big(\<Psi>^{(n)},\<Psi>^{(n)}\big)}
\end{equation*}
be suitably renormalized so as to converge in $L^{\infty,-}(\Omega;{\cop Z^{c,b}})$ (for $b,c$ to be determined) ?

\

\subsubsection{Stochastic Schr{\" o}dinger product as a random operator}\label{subsec:sto-pro-ran-op}

\

\smallskip

Let us now go back to the formulation of the problem $\mathbf{(P2)}$, related to the control of the (Schr{\" o}dinger) product operation $\cl^{(n)}$ in \eqref{produ-ope}. Following the above condition $\mathbf{(C2)}$, we intend to tackle this product operation directly, that is without any renormalization trick. 

\smallskip

Recall that by {\cop \eqref{cm}}, one has {\cop
\begin{align}
\cl^{(n)}(z)_k&=\ci_{\chi}\Big(t\mapsto \sum_{k_1} e^{\imath t\Omega_{k,k_1-k}} z_{k_1}(t) \overline{\<Psi>^{(n)}_{k_1-k}}(t)\Big), \quad \text{with} \ \ \Omega_{k,k_1-k}=2k(k_1-k).\label{ana-l-n}
\end{align}}

\smallskip

Based on this expression, one can morally expect the desired regularization effect (in space) to stem from the integration (in time) of the exponential factor $e^{\imath t{\cop \Omega_{k,k_1-k}}}$ in \eqref{ana-l-n}. {\cop Since $\Omega_{k,k_1-k}$ vanishes for $k=0$ and $k=k_1$ only, we are naturally led to the decomposition of $\cl^{(n)}$ as a sum of the \enquote{resonant} (Schr{\" o}dinger) product operator
\begin{align}\label{cl-circ-1-1}
&\cl^{\circ,(n)}(z)_k=\sum_{k_1} \1_{\{k= 0\}\cup\{k_1=k\}}\ci_{\chi}\Big(t\mapsto z_{k_1}(t)\overline{\<Psi>^{(n)}_{k_1-k}}(t)\Big),
\end{align}
and the \enquote{non-resonant} (Schr{\" o}dinger) product
\begin{align}\label{cl-sharp-1-1}
&\cl^{\sharp,(n)}(z)_k=\1_{\{k\neq 0\}}\ci_{\chi}\bigg(t\mapsto\sum_{k_1\neq k}e^{\imath t\color{blue}\Omega_{k,k_1-k}\color{black}} z_{k_1}(t)\overline{\<Psi>^{(n)}_{k_1-k}}(t)\bigg).
\end{align}}

Let us now particularize the formulation of the problem $\mathbf{(P2)}$ to each of these two components.

\smallskip

As far as $\cl^{\circ,(n)}$ is concerned, observe that the {\cop strong degeneracy condition $\1_{\{k= 0\}\cup\{k_1=k\}}$ morally reduces the operator to two (time-dependent) vectors. in this setting, one can legitimately hope for a direct analysis of the operator and a direct treatment of the central question: }

\

\noindent
$\mathbf{(P2')}$ Does the operator norm $\big\|\cl^{\circ,(n)}\big\|_{{\cop Z^{c,b}} \to {\cop Z^{c,b}}}$ converge (in $L^{\infty,-}(\Omega)$) as $n\to\infty$ (for suitable $b,c$) ?

\

Unfortunately, due to the much higher sophistication of the non-resonant component $\cl^{\sharp,(n)}$, the evaluation of the random - implicitly defined - norm $\big\|\cl^{\sharp,(n)}\big\|_{{\cop Z^{c,b}} \to {\cop Z^{c,b}}}$ turns out to be a much more difficult task. In fact, capturing the value of an operator norm in $L({\cop Z^{c,b}},{\cop Z^{c,b}})$ (or in any other distributions scale) is known to be a rarely attainable objective in general, and this observation is all the more true in our random setting. For this reason, we shall instead focus on the analysis of a more tractable estimate of $\big\|\cl^{\sharp,(n)}\big\|_{{\cop Z^{c,b}} \to {\cop Z^{c,b}}}$.

\smallskip

In order to introduce the latter quantity, observe that $\cl^{\sharp,(n)}$ can be written in Fourier coordinates as
\begin{equation}\label{kernel-random-op}
\cf\big( \cl^{\sharp,(n)}( z)_k\big)(\la)=\sum_{k_1}\int d\la_1 \big(\ck^{(n)}_\chi\big)_{kk_1}(\la,\la_1) \cf(z_{k_1})(\la_1),
\end{equation}
{\cop where the kernel $\ck^{(n)}_\chi$ is explicitly given by
\begin{equation}\label{def:ck}
\big(\ck^{(n)}_\chi\big)_{kk_1}(\la,\la_1):=\1_{\{k\neq 0\}}\1_{\{k_1\neq k\}}\int_{\R} d\la_2 \, \overline{\cf\big(\<Psi>^{(n)}_{k_1-k}\big)(\la_2)}\ci_\chi(\la,\Omega_{k,k_1-k}+\la_1-\la_2).
\end{equation}}
Now, based on this kernel formulation, it is easily checked (see Section \ref{subsec:gene-ope-esti} for details) that for every $z\in {\cop Z^{c,b}}$,
\begin{align}
&\big\|\cl^{\sharp,(n)} (z)\big\|_{{\cop Z^{c,b}}}\leq \big\|z\big\|_{{\cop Z^{c,b}}}\cdot \cp^{(n)}_{c,b},\label{claimed-bou}
\end{align}
where $\cp^{(n)}_{c,b}$ is given by
\begin{align}
&\cp^{(n)}_{c,b}:=\sum_{k_1,k_1'}\int_{\R^2} \frac{d\la_1}{{\cop \langle k_1 \rangle^{2c}\langle \la_1\rangle^{2b}}}\frac{d\la_1'}{{\cop \langle k'_1 \rangle^{2c}\langle \la'_1\rangle^{2b}}} \bigg| \sum_k \int d\la \, {\cop \{\langle k \rangle^{2c}\langle\la\rangle^{2b}\}}\, \big(\ck^{(n)}_{\chi}\big)_{kk_1}(\la,\la_1) \overline{\big(\ck^{(n)}_{\chi}\big)_{kk_1'}(\la,\la_1')}\bigg|^2.\label{defi:cp-n-gene}
\end{align}

\smallskip

With bound \eqref{claimed-bou} in mind, the - explicit - estimate $\cp^{(n)}_{c,b}$ is thus the quantity that will serve us as a landmark in the analysis of $\cl^{\sharp,(n)}$. Along this idea, let us particularize the previous control issue $\mathbf{(P2)}$ to $\cl^{\sharp,(n)}$ through the following simplified version of the problem.  

\

\noindent
$\mathbf{(P2'')}$ Does the quantity $\cp^{(n)}_{c,b}$ converge in $L^{\infty,-}(\Omega)$ (or even in $L^{1}(\Omega)$) as $n\to\infty$ (for suitable $b,c$) ?

\

\begin{remark}
In accordance with the developments in \cite{DFT}, we consider the approximation of $\big\|\cl^{\sharp,(n)}\big\|_{{\cop Z^{c,b}} \to {\cop Z^{c,b}}}$ by $\cp^{(n)}_{c,b}$ as a full part of the first-order strategy here described. Note that similar kernel-based estimates of operator norms are also extensively used in the so-called \textit{theory of random tensors} developed by Deng, Nahmod and Yue (see e.g. item (5) of Proposition 5.1 or the proof of Proposition 6.1 in \cite{DNY}).\\
\indent We have included a short discussion about the sharpness of this approximation in Sections \ref{subsec:gene-ope-esti} and \ref{subsec:young-op} below. In particular, it is therein shown that in the Young integration setting, the consideration of the corresponding quantity $\cp^{(n)}$ allows us to recover the well-known threshold value $H=\frac12$ for the Hurst index.

\end{remark}

\smallskip

\color{blue}

To conclude this (partial) heuristic analysis of problem \eqref{general-deter-nls}, let us observe that, although the spaces $Z^{c,b}$ naturally constitute the reference topology in this context, the questions raised above could just as well have been formulated using other norms. For instance, by adopting the norm introduced in \eqref{defi:zti-c,b-norm}, problem $\mathbf{(P2'')}$ immediately becomes:

\

\noindent
$\mathbf{(P2''')}$ Does the quantity $\cpti^{(n)}_{c,b}$ defined by
\begin{align*}
&\cpti^{(n)}_{c,b}:=\sum_{k_1,k_1'}\int_{\R^2} \frac{d\la_1}{{\cop \langle k_1 \rangle^{2c}+\langle \la_1\rangle^{2b}}}\frac{d\la_1'}{{\cop \langle k'_1 \rangle^{2c}+\langle \la'_1\rangle^{2b}}} \bigg| \sum_k \int d\la \, {\cop \{\langle k \rangle^{2c}+\langle\la\rangle^{2b}\}}\, \big(\ck^{(n)}_{\chi}\big)_{kk_1}(\la,\la_1) \overline{\big(\ck^{(n)}_{\chi}\big)_{kk_1'}(\la,\la_1')}\bigg|^2
\end{align*}
converge in $L^{1}(\Omega)$ as $n\to\infty$ (for suitable $b,c$) ?

\color{black}

\

\subsection{Objective of the study}

\

\smallskip

{\cop The above-described first-order approach to \eqref{general-deter-nls}} is precisely the method that was implemented - with success - in \cite{DFT}, for a fractional noise of index $H>\frac58$. In particular, the three questions $\mathbf{(P1')}$, $\mathbf{(P2')}$ and $\mathbf{(P2'')}$ all received \textit{positive} answers in the latter situation.

\

\textit{Our aim in the present study is to show that this first-order approach is however not sufficient to cover the whole range $\frac12<H<1$}, thus advocating for more sophisticated developments in rougher situations than those treated in \cite{DFT}. This conclusion will be derived from a close examination of the challenging convergence issues $\mathbf{(P1')}$, $\mathbf{(P2')}$ and {\cop $\mathbf{(P2'')}$-$\mathbf{(P2''')}$}. 

\section{Main results}

With the above presentation of the problem in mind, we are in a position to state our main results related to the three central questions $\mathbf{(P1')}$, $\mathbf{(P2')}$ and {\cop $\mathbf{(P2'')}$-$\mathbf{(P2''')}$}.

\subsection{Problem (P1'): control of the renormalized Schr{\"o}dinger product tree}\label{subsec:schro-prod-tree}

\

\smallskip

Let us start with the examination of the tree process $\<IPsi2>^{(n)}:=\ci_\chi \cm\big(\<Psi>^{(n)},\<Psi>^{(n)}\big)$, which we shall refer to as the \textit{(Schr{\"o}dinger) product tree} in the sequel.

\smallskip

As we evoked it earlier, this quantity needs to be renormalized before we can study its convergence. To this end, we will successively rely on three classical rescaling steps:

\smallskip

\noindent
$\mathbf{(R)}$ A first partial space averaging
$$\cm(\<Psi>^{(n)},\<Psi>^{(n)}) \quad \longrightarrow \quad \cm(\<Psi>^{(n)},\<Psi>^{(n)})-\<Psi>^{(n)}\int \overline{\<Psi>^{(n)}},$$
in the spirit of Bourgain's renormalization method for the cubic NLS model (see e.g. \cite{bourgain,CO}). {\cop This transformation allows us to eliminate a first set of resonant terms (that is, terms for which the resonant function vanishes): indeed, in Fourier mode, it can be checked from \eqref{cm} that
$$\Big(\cm(\<Psi>^{(n)},\<Psi>^{(n)})-\<Psi>^{(n)}\int \overline{\<Psi>^{(n)}}\Big)_k=\sum_{k_1\neq 0} e^{\imath t\Omega_{k,k_1}} \<Psi>^{(n)}_{k+k_1}(t) \overline{\<Psi>^{(n)}_{k_1}}(t).$$
}

\smallskip

\noindent
$\mathbf{(R')}$ Then a more standard space averaging:
\small
\begin{equation}\label{defi:cm-tild}
\cm(\<Psi>^{(n)},\<Psi>^{(n)})-\<Psi>^{(n)}\int \overline{\<Psi>^{(n)}} \ \longrightarrow \ \widetilde{\cm}(\<Psi>^{(n)},\<Psi>^{(n)}):=\bigg[\cm(\<Psi>^{(n)},\<Psi>^{(n)})-\<Psi>^{(n)}\int \overline{\<Psi>^{(n)}}\bigg] -\int \bigg[\cm(\<Psi>^{(n)},\<Psi>^{(n)})-\<Psi>^{(n)}\int \overline{\<Psi>^{(n)}}\bigg],
\end{equation}
\normalsize
{\cop which achieves the removal of the resonant terms: namely, in Fourier coordinates,
\begin{equation} \label{cm-fourier-intro}
\widetilde{\cm}(\<Psi>^{(n)},\<Psi>^{(n)})_k(t):=\1_{\{k\neq 0\}}\sum_{k_1\neq 0} e^{\imath t\Omega_{k,k_1}} \<Psi>^{(n)}_{k+k_1}(t) \overline{\<Psi>^{(n)}_{k_1}}(t), \quad \text{with} \ \Omega_{k,k_1}:=2kk_1.
\end{equation}
}

\smallskip

\noindent
$\mathbf{(R'')}$ Finally, a stochastic Wick renormalization trick: 
$$\widetilde{\cm}( \<Psi>^{(n)},\<Psi>^{(n)}) \quad \longrightarrow \quad \widetilde{\cm}( \<Psi>^{(n)},\<Psi>^{(n)})-\mathbb{E}\big[\widetilde{\cm}( \<Psi>^{(n)},\<Psi>^{(n)})\big].$$

\

\begin{remark}
Observe that the rescaling terms in Steps $\mathbf{(R')}$ and $\mathbf{(R'')}$ only involve reduced quantities, i.e. quantities depending on at most two of the three parameters $(t,x,\omega)$, which can indeed be expected from any reasonable renormalization trick.

On the other hand, the rescaling term in $\mathbf{(R)}$ still appeals to the \enquote{fully-dependent} quantity $\<Psi>^{(n)}$. However, the latter can easily be turned into a linear correction drift at the level of the approximated equation, making it acceptable in the procedure (see \cite[Section 1.2]{DFT} for details). 
\end{remark}

\

As a result of the three steps above, we derive the following renormalized version of the product tree:
\begin{equation}\label{def:sq-tree-renorm-intro}
\widetilde{\<IPsi2>}^{(n)}_k(t):=\ci_\chi \Big(\widetilde{\cm}( \<Psi>^{(n)},\<Psi>^{(n)})_k(.)-\mathbb{E}\big[\widetilde{\cm}( \<Psi>^{(n)},\<Psi>^{(n)})_k(.)\big]\Big)(t),
\end{equation}
where the renormalized product $\widetilde{\cm}( \<Psi>^{(n)},\<Psi>^{(n)})$ is defined by \eqref{defi:cm-tild}.


\

Our main result about this (renormalized) product tree can now be stated as follows.

\begin{proposition}\label{prop:ice-cream-frac-intro}
Assume that {\color{blue}$H\in (\frac12,\frac34)$}. Then the following picture holds true.

\smallskip

\noindent
$(i)$ For every $0\leq c<\frac12$, one has
$$\sup_{n\geq 1} \mathbb{E}\Big[\Big\|\widetilde{\<IPsi2>}^{(n)}\Big\|_{{\color{blue}Z^{c,0}}}^2\Big] <\infty.$$

\smallskip

\noindent
$(ii)$ For every $\frac12< b <2H-\frac12$, one has
$$\sup_{n\geq 1} \mathbb{E}\Big[\Big\|\widetilde{\<IPsi2>}^{(n)}\Big\|_{{\color{blue}Z^{0,b}}}^2\Big] <\infty.$$

\smallskip

\noindent
$(iii)$ If $b = 2H-\frac12$, then
$$\mathbb{E}\Big[\Big\|\widetilde{\<IPsi2>}^{(n)}\Big\|_{{\color{blue}Z^{0,b}}}^2\Big] \stackrel{n\to\infty}{\longrightarrow} \infty.$$
{\color{blue} More generally, if $0\leq c< 1$ and $\frac12<b<1$ are such that $b+\frac{c}{2}\geq 2H-\frac12$, then
$$\mathbb{E}\Big[\Big\|\widetilde{\<IPsi2>}^{(n)}\Big\|_{Z^{c,b}}^2\Big] \stackrel{n\to\infty}{\longrightarrow} \infty.$$}
\end{proposition}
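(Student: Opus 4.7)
The plan is to exploit that $\widetilde{\<IPsi2>}^{(n)}_k(t)$ lives in the second Wiener chaos and to reduce the second-moment computation to explicit kernel integrals via Isserlis' formula. Since the constraint $k\neq 0$ in \eqref{cm-fourier-intro} forces the Fourier modes $k+k_1$ and $k_1$ of $\<Psi>^{(n)}$ to differ (thus the two $\<Psi>$-factors to be independent), a quick inspection of the three possible Wick pairings contributing to the fourth moment of $\widetilde{\cm}^{(n)}_k(s)\overline{\widetilde{\cm}^{(n)}_k(t)}$ shows that only the diagonal one (namely $k_1=k_1'$) survives --- the other two pairings forcing the constraint $k=0$ and hence vanishing. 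Writing $C^{(n)}_k(s,t):=\mathbb{E}\bigl[\<Psi>^{(n)}_k(s)\overline{\<Psi>^{(n)}_k(t)}\bigr]$, this yields the basic identity
\begin{equation*}
\mathbb{E}\bigl[\widetilde{\cm}^{(n)}_k(s)\overline{\widetilde{\cm}^{(n)}_k(t)}\bigr] = \sum_{k_1} e^{\imath(s-t)\cdot 2kk_1}\, C^{(n)}_{k+k_1}(s,t)\,\overline{C^{(n)}_{k_1}(s,t)},
\end{equation*}
the sum being restricted to $k_1\neq 0$ and $|k_1|,|k+k_1|\leq 2^n$.

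Next I would invoke the spectral representation $|u-v|^{2H-2}=c_H\int_{\R}|\omega|^{1-2H}e^{\imath\omega(u-v)}\,d\omega$ of the fractional covariance to rewrite $C^{(n)}_k(s,t)$ as a single integral against $|\omega|^{1-2H}$ of a product of $\ci_\chi$-type oscillatory kernels, with the shift $\omega\mapsto\omega-k^2$ induced by the Schr\"odinger phase. Plugging this into the identity $\cf(\widetilde{\<IPsi2>}^{(n)}_k)(\lambda)=\int\ci_\chi(\lambda,\lambda')\cf(\widetilde{\cm}^{(n)}_k)(\lambda')\,d\lambda'$ and invoking standard pointwise bounds on the Bourgain kernel (of the form $|\ci_\chi(\lambda,\lambda')|\lesssim\langle\lambda-\lambda'\rangle^{-M}\langle\lambda'\rangle^{-1}$), the quantity $\mathbb{E}\bigl[|\cf(\widetilde{\<IPsi2>}^{(n)}_k)(\lambda)|^2\bigr]$ reduces to an explicit multivariate integral involving the spectral variables $(\omega_1,\omega_2)$ and an auxiliary time-Fourier variable, summed over $k_1$ with the oscillating phase $e^{\imath\cdot 2kk_1\cdot(\ldots)}$.

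For the upper bounds $(i)$ and $(ii)$, I would then carry out a case analysis on the relative sizes of $|k|$, $|k_1|$, $|k+k_1|$ and of the spectral variables, with two key inputs: the oscillation gain coming from integrating $\ci_\chi$ against the factor $e^{\imath t\cdot 2kk_1}$ (available since $\Omega_{k,k_1}\neq 0$), and the fractional integrability of $|\omega|^{1-2H}$ (which encodes the time regularity $H$ of the linear tree $\<Psi>$). For $(i)$, in the absence of a $\langle\lambda\rangle$-weight, the spatial weight $\langle k\rangle^{2c}$ is absorbed via the former gain, producing the critical threshold $c<\tfrac12$. For $(ii)$, the temporal weight $\langle\lambda\rangle^{2b}$ combines with the $|\omega|^{1-2H}$ factor after a careful resummation across the various resonant regions of the multivariate integral, producing the critical threshold $b<2H-\tfrac12$.

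For the divergence statement $(iii)$, I would produce a matching lower bound by isolating the dominant resonant contribution of the multivariate integral. Concretely, restricting the integration to a window where the integrand is positive and large --- e.g.~$|k|$ of moderate size, $|k_1|\sim 2^n$, and both spectral variables $\omega_j$ close to their respective Schr\"odinger-resonant values $(k+k_1)^2$ and $k_1^2$ --- and summing over $k_1$ should yield a lower bound of order $\log(2^n)$, which diverges as $n\to\infty$. The principal technical difficulty lies in this last step: one must localize the resonant window sharply enough that no hidden cancellation ruins the logarithmic divergence, and then convert this into an unambiguous from-below estimate on the full $X^{2H-1/2}$-norm.
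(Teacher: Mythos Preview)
Your treatment of (i)–(ii) is essentially the paper's route: Wick formula, the observation that the two non-diagonal pairings force $k=0$ and vanish, spectral representation of the fractional covariance, and pointwise kernel estimates on $\ci_\chi$. One minor correction: the bound $|\ci_\chi(\lambda,\lambda')|\lesssim\langle\lambda-\lambda'\rangle^{-M}\langle\lambda'\rangle^{-1}$ is false in general (integration by parts in $s$ leaves a boundary contribution at $s=0$ of size $\langle\lambda'\rangle^{-1}\langle\lambda\rangle^{-M}$, which dominates when $|\lambda|\ll|\lambda'|$); the bound actually used is $|\ci_\chi(\lambda,\lambda')|\lesssim\langle\lambda\rangle^{-1}\langle\lambda-\lambda'\rangle^{-1}$, and this is what the subsequent estimates require.

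For (iii) there is a real gap: your resonant window does not capture the divergence. After rewriting each $(k,k_1)$-summand as a manifest square $\int|\xi|^{1-2H}d\xi\int|\eta|^{1-2H}d\eta\int d\lambda\,\langle\lambda\rangle^{2b}\,\big|Q^{\Omega_{k,k_1}}_{\xi+k_1^2,\eta+(k+k_1)^2}(\lambda)\big|^2$, an asymptotic analysis (via an integration-by-parts splitting $Q^L=M^L+R^L$ and a sequence of rescalings on the main piece $M^L$) shows that at $b=2H-\tfrac12$ this summand is asymptotically $c\cdot k^{4H-3}k_1^{-1}(k+k_1)^{-(4H-2)}$ for a strictly positive constant $c$. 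If you keep $|k|$ bounded and let $k_1\sim 2^n$, each term is $\sim k_1^{-(4H-1)}$ and the sum over $k_1$ \emph{converges} (since $4H-1>1$), so your window contributes $o(1)$, not $\log 2^n$. The logarithmic divergence actually lives in the double sum over the region where \emph{both} $k$ and $k_1$ are large: the inner sum $\sum_{k_1\geq k}k_1^{-1}(k+k_1)^{-(4H-2)}\sim k^{-(4H-2)}$, and then $\sum_{k\geq K}k^{4H-3}\cdot k^{-(4H-2)}=\sum_{k\geq K}k^{-1}=\infty$. Extracting this requires controlling the remainder $R^L$ uniformly (the paper proves the cross term $\sum_{k,k_1}\Omega^{1+2b}\int|M^L||R^L|<\infty$) and identifying the precise positive limit of the rescaled main term---steps your sketch does not anticipate.
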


\smallskip

The above properties thus provide us with the exact time regularity of $\widetilde{\<IPsi2>}$ (namely $(2H-\frac12)-$), which, given the central role of the process in the dynamics of \eqref{general-deter-nls}, can be seen as an important result of independent interest.

\smallskip

In view of our present objective, the condition {\cop $b+\frac{c}{2}<2H-\frac12$} will be our first restriction in the application of the first-order strategy described in Section \ref{subsec:first-ord-stra}.

\

\subsection{Problem (P2'): about the Schr{\" o}dinger product operator $\cl^{\circ,(n)}$}

\

\smallskip

Let us now turn to the issues related to the Schr{\" o}dinger product operator $\cl^{(n)}$, starting from its resonant - and relatively simple - component $\cl^{\circ,(n)}$ (see \eqref{cl-circ-1-1}).

\smallskip

Our main result about $\cl^{\circ,(n)}$ will actually be derived from a careful examination of the action of the operator on suitable (Gaussian) processes. The property can be summed as follows.

\begin{proposition}\label{prop:q-sharp-optim-intro}
Assume that $H\in (\frac12,\frac34)$ and fix $b\in (\frac12,1)$. 

\smallskip

If $0\leq c<\frac32-2H$, then there exists a sequence $({\cop Y^{(n)}})_{n\geq 1}$ of random functions such that
\begin{equation}\label{bound-z-n}
\sup_{n\geq 1} \mathbb{E}\Big[ \big\|{\cop Y^{(n)}}\big\|_{{\color{blue}Z^{c,b}}}^q\Big] < \infty \quad \text{for every} \ q\geq 2, \quad \text{and}\quad \mathbb{E}\Big[ \big\| \cl^{\circ,(n)}{\cop Y^{(n)}}\big\|_{{\color{blue}L^2(\T \times \R)}}^2\Big] \stackrel{n\to\infty}{\longrightarrow} \infty.
\end{equation} 
In particular, if $0\leq c<\frac32-2H$, then for every $p>2$, one has
\begin{equation}\label{explos-op-1}
\mathbb{E}\Big[\big\|\cl^{\circ,(n)}\big\|_{{\color{blue}Z^{c,b}} \to {\color{blue}Z^{c,b}}}^p\Big] \stackrel{n\to\infty}{\longrightarrow} \infty,
\end{equation}
{\cop as well as
\begin{equation}\label{explos-op-2}
\mathbb{E}\Big[\big\|\cl^{\circ,(n)}\big\|_{{\color{blue}\widetilde{Z}^{c,b}} \to {\color{blue}\widetilde{Z}^{c,b}}}^p\Big] \stackrel{n\to\infty}{\longrightarrow} \infty,
\end{equation}
where the space $\widetilde{Z}^{c,b}$ has been introduced in \eqref{defi:zti-c-b}.}

\end{proposition}

Proposition \ref{prop:p-p-q} therefore offers a partial answer to the guideline question $\mathbf{(P2')}$. In particular here, the result gives birth to our second restriction on the spaces ${\cop Z^{c,b}}$ (or ${\cop \widetilde{Z}^{c,b}}$) involved in the first-order analysis: namely, one must have $c\geq \frac32-2H$. 

\smallskip

\color{blue}

\begin{remark}
As we shall see in the course of the proof (see Section \ref{sec:prod-restr-space}, and in particular inequality \eqref{inega-intro}), the blow-up observed in \eqref{bound-z-n} is primarily due to the divergence of the term corresponding to $k=0$ in the definition \eqref{cl-circ-1-1} of $\cl^{\circ,(n)}$. This observation partially echoes the result established in \cite[Proposition 3.2]{liu} for an analogous product on the two-dimensional torus (with a noisy input of slightly different nature).

\end{remark}

\

The combination of the restriction in Proposition \ref{bound-z-n} with the constraints arising from Proposition \ref{prop:ice-cream-frac-intro} already rules out the possibility of covering the interval $H\in (\frac12,1)$ through a first-order analysis based on the spaces  $\{Z^{c,b}: \, c\in [0,1), \, b\in (\frac12,1)\}$.

\begin{corollary}\label{coroz}
If $\frac12<H\leq \frac{7}{12}$, then for any pair $(c,b)\in [0,1)\times (\frac12,1)$, one has either
\begin{equation}\label{explosi-bourgain}
\big\|\widetilde{\<IPsi2>}^{(n)}\big\|_{Z^{c,b}}\stackrel{n\to\infty}{\longrightarrow} \infty,  \quad \text{or} \quad  \big\|\cl^{\circ,(n)}\big\|_{Z^{c,b} \to Z^{c,b}} \stackrel{n\to\infty}{\longrightarrow} \infty.
\end{equation}
In particular, for $\frac12<H\leq \frac{7}{12}$, the stochastic Schr{\"o}dinger problem \eqref{general-deter-nls} cannot be treated in the scale $\{Z^{c,b}\}$ with the first-order strategy described in Section \ref{subsec:first-ord-stra}.
\end{corollary}

\begin{proof}
For none of the two explosions in \eqref{explosi-bourgain} to happen, it is necessary that $b+\frac{c}{2}<2H-\frac12$ (Proposition \ref{prop:ice-cream-frac-intro}) and $c\geq \frac32-2H$ (Proposition \ref{prop:q-sharp-optim-intro}), which can be summed up as 
$$\frac32-2H \leq c < 4H-1-2b< 4H-2,$$
and hence one must have $H>\frac{7}{12}$.
\end{proof}

\

In light of the above observation, {\it we now turn to the broader spaces $\widetilde{Z}^{c,b}$}. Indeed, the results of Proposition \ref{prop:ice-cream-frac-intro} show that $\widetilde{\<IPsi2>} \in \widetilde{Z}^{c,b}$ for all $0\leq c <\frac12$ and $\frac12<b<2H-\frac12$. As a result, the additional constraint $c\geq \frac32-2H$ arising from Proposition \ref{prop:q-sharp-optim-intro} does not, at this stage, prevent the possibility of covering the interval $H\in (\frac12,1)$ by working within these spaces.

\smallskip

The continuity of the deterministic multiplication $z \longmapsto \ci_\chi \cm\big(z,z\big)$ from $\widetilde{Z}^{c,b}$ to $\widetilde{Z}^{c,b}$ is guaranteed by the result of Proposition \ref{prop:control-m-z-z-ztilde}, under the general condition $b>\frac12$ and $0\leq c \leq b$. Our next step is to address the non-resonant component $\cl^{\sharp,(n)}$ of the multiplication operator with $\<Psi>^{(n)}$. 

\ 

\color{black}

\subsection{{\cop Problem $\mathbf{(P2''')}$}: about the Schr{\" o}dinger product operator $\cl^{\sharp,(n)}$}

\

\smallskip

As explained in the above Section \ref{subsec:sto-pro-ran-op}, we focus here on the convergence issue for the approximation ${\cop \cpti^{(n)}_{c,b}}$ of $\big\|\cl^{\sharp,(n)}\big\|_{{\cop \widetilde{Z}^{c,b}} \to {\cop \widetilde{Z}^{c,b}}}$ given by the expression
\begin{align}
&{\cop \cpti^{(n)}_{c,b}}:=\nonumber\\
&\sum_{k_1,k_1'}\int_{\R^2} \frac{d\la_1}{\langle k_1 \rangle^{2c}+\langle \la_1\rangle^{2b}}\frac{d\la_1'}{\langle k'_1 \rangle^{2c}+\langle \la'_1\rangle^{2b}} \bigg| \sum_k \int d\la \, \{\langle k \rangle^{2c}+\langle\la\rangle^{2b}\}\, \big(\ck^{(n)}_{\chi}\big)_{kk_1}(\la,\la_1) \overline{\big(\ck^{(n)}_{\chi}\big)_{kk_1'}(\la,\la_1')}\bigg|^2,\label{defi:cp-n}
\end{align}
{\cop where the Fourier kernel $\ck^{(n)}_\chi$ has been introduced in \eqref{def:ck}.}

\begin{proposition}\label{prop:p-p-q}
Let $H\in (\frac12,\frac34)$, $b\in (\frac12,1)$ and $c\in (0,1)$. 

\smallskip

If $c\geq b- \frac14$, then one has
$$\mathbb{E}\Big[ \big|{\cop \cpti^{(n)}_{c,b}}\big|\Big] \stackrel{n\to\infty}{\longrightarrow} \infty.$$

\end{proposition}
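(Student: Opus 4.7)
Since $\cp^{(n)}_{b,c}\geq 0$, one has $\mathbb{E}[|\cp^{(n)}_{b,c}|]=\mathbb{E}[\cp^{(n)}_{b,c}]$, and Fubini allows the expectation to be interchanged with the sums and integrals in \eqref{defi:cp-n}. My plan is to extract an \emph{explicit, deterministic} lower bound for $\mathbb{E}[\cp^{(n)}_{b,c}]$ via a single Cauchy--Schwarz step, and then to show that this bound diverges whenever $c\geq b-\tfrac14$.

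Concretely, consider the quadratic form
\begin{equation*}
F(k_1,k_1',\la_1,\la_1'):=\sum_k\int d\la\,\{\langle k\rangle^{2c}+\langle\la\rangle^{2b}\}\,\ck^{(n)}_{kk_1}(\la,\la_1)\overline{\ck^{(n)}_{kk_1'}(\la,\la_1')}
\end{equation*}
that appears inside the squared modulus in \eqref{defi:cp-n}. The inequality $\mathbb{E}|F|^2\geq |\mathbb{E} F|^2$ gives a deterministic pointwise lower bound, and the mutual independence of the Gaussian modes $(\dot{\beta}^{(k)})_{k\in\Z}$ forces $\mathbb{E}[\overline{\cf(\<Psi>^{(n)}_{k_1-k})(\la_2)}\cf(\<Psi>^{(n)}_{k_1'-k})(\la_2')]$ to vanish unless $k_1=k_1'$. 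Hence $\mathbb{E} F$ is supported on the diagonal $k_1=k_1'$ and equals there an explicit deterministic kernel $F_0(k_1,\la_1,\la_1')$, obtained by the single ``diagonal'' Wick contraction of the two $\<Psi>^{(n)}$-factors hidden inside $\ck^{(n)}_\chi$. This yields the reduction
\begin{equation*}
\mathbb{E}[\cp^{(n)}_{b,c}]\ \geq\ \sum_{k_1\in\Z}\iint_{\R^2}\frac{|F_0(k_1,\la_1,\la_1')|^2}{(\langle k_1\rangle^{2c}+\langle\la_1\rangle^{2b})(\langle k_1\rangle^{2c}+\langle\la_1'\rangle^{2b})}\,d\la_1\,d\la_1',
\end{equation*}
which is entirely deterministic and depends only on the explicit covariance of $\<Psi>^{(n)}$ and on the kernel $\ci_\chi$.

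Next, I would substitute into $F_0$ the explicit two-point covariance $G_K(\la_2,\la_2'):=\mathbb{E}[\overline{\cf(\<Psi>^{(n)}_K)(\la_2)}\cf(\<Psi>^{(n)}_K)(\la_2')]$ (computed from $|t-s|^{2H-2}$ together with the time cutoff $\chi$), and use the high-frequency asymptotics $\ci_\chi(\la,\mu)\sim \widehat{\chi^2}(\la-\mu)/\mu$ of the time-Fourier kernel. The spectral picture is that of a fractional noise shifted by the Schr\"odinger frequency $-K^2$: up to $\chi$-smoothing, $G_K$ concentrates near $\la_2\sim\la_2'\sim -K^2$ with effective magnitude of order $|\la_2+K^2|^{1-2H}/|\la_2|^2$. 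After substitution, I would localize dyadically to a regime $|k|\sim|k_1|\sim N\gg 1$ with $\la,\la_1,\la_1'$ of compatible size, tied together by the near-resonance condition $\la+\la_1\sim\Omega_{k,k_1-k}+\la_2$, and power-count: the resulting dyadic series over $N$ has general term of order $N^{\alpha(b,c,H)}$, with the exponent $\alpha$ crossing the critical value $-1$ precisely on the line $c=b-\tfrac14$, thereby producing the claimed divergence.

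The main obstacle is this last asymptotic step. The cutoff $\chi$ makes $\<Psi>^{(n)}$ nonstationary (which rules out a direct Plancherel evaluation of $F_0$), and the shift of the fractional spectral density by the quadratic Schr\"odinger weight $K^2$ couples the three time-frequency variables $(\la,\la_1,\la_1')$ in a genuinely bilinear fashion. A careful dyadic Littlewood--Paley partition of each variable, combined with a stationary-phase-type treatment of the $\la_2$-integral in the non-oscillatory region where the $\widehat{\chi^2}$-factor localizes, should provide the sharp lower bound matching the critical line $c=b-\tfrac14$.
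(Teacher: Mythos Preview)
Your opening matches the paper exactly: apply $\mathbb{E}|F|^2\ge|\mathbb{E} F|^2$ and use independence of the $(\dot\beta^{(k)})$ to collapse to the diagonal $k_1=k_1'$, yielding the deterministic lower bound
\[
\mathfrak{P}^{(n)}_{b,c}:=\sum_{k_1}\iint \frac{|F_0(k_1,\la_1,\la_1')|^2}{(\langle k_1\rangle^{2c}+\langle\la_1\rangle^{2b})(\langle k_1\rangle^{2c}+\langle\la_1'\rangle^{2b})}\,d\la_1\,d\la_1'.
\]
From there on, however, your plan diverges from the paper and remains essentially heuristic.

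The paper does \emph{not} power-count over a dyadic region $|k|\sim|k_1|\sim N$. Instead it makes a very specific resonant choice: it restricts to the single frequency pair $k_1=k-1$ (so $|k_1-k|=1$ is bounded!) and localizes $\la_1,\la_1'\in[-2k-\tfrac12,-2k+\tfrac12]$. With this choice the Schr\"odinger phase $\Omega_{k,k_1-k}=\Omega_{k,-1}$ is exactly absorbed into the $\la_1$-shift, and the inner quantity $A^\xi_{(k,\la)}(k-1,\la_1-2k)=\int d\la_2\,\ci_\chi(\la,\la_1-\la_2)\overline{\ci_\chi(\la_2,-\xi-1)}$ becomes \emph{independent of $k$}. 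The lower bound then factors as
\[
\mathfrak{P}^{(n)}_{b,c}\ \gtrsim\ \bigg(\sum_{2\le k\le 2^n}\frac{\langle k\rangle^{4c}}{\langle k\rangle^{4c}+\langle k\rangle^{4b}}\bigg)\cdot c_0,
\]
with $c_0>0$ a fixed constant checked by evaluating the $k$-free integrand at one point. The first factor behaves like $\sum_k\langle k\rangle^{-4(b-c)}$, which diverges precisely when $c\ge b-\tfrac14$.

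Your approach has two concrete problems relative to this. First, the localization ``$|k|\sim|k_1|\sim N$'' does not pin down $|k_1-k|$, which is the parameter that controls the covariance $G_{k_1-k}$; the paper's whole point is to freeze $k_1-k=-1$ so that this covariance contributes an $N$-independent constant rather than a decaying factor. Without that observation your exponent $\alpha(b,c,H)$ is not determined, and in particular the claim that it crosses $-1$ exactly at $c=b-\tfrac14$ is unsupported. Second, extracting a \emph{lower} bound from a dyadic expansion requires controlling cancellations across $k,k',\xi,\xi'$ in the expansion of $|F_0|^2$; the paper avoids this entirely by rewriting $F_0$ via the factorization $\mathbb{E}[\ck_{kk_1}\overline{\ck_{kk_1}}]=\int |\xi|^{1-2H}A^\xi\overline{A^\xi}\,d\xi$, which makes every term in the relevant expansion manifestly nonnegative before one starts throwing terms away. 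Your sketch does not provide any such positivity mechanism.
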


Proposition \ref{prop:p-p-q} therefore points out our third (and last) restriction on the class of spaces ${\cop \widetilde{Z}^{c,b}}$ suitable for a first-order strategy: one must impose that $c< b- \frac14$.

\

By gathering the constraints exhibited in Propositions \ref{prop:ice-cream-frac-intro}, \ref{prop:q-sharp-optim-intro} and \ref{prop:p-p-q}, we can conclude our investigations about the limits of the first-order strategy for the stochastic Schr{\"o}dinger problem \eqref{general-deter-nls}.

\begin{corollary}\label{coro:ztilde}
If $\frac12<H\leq \frac{9}{16}$, then for any pair $(b,c)\in (\frac12,1)\times [0,1)$, one has either
\begin{equation}\label{explosi}
\big\|\widetilde{\<IPsi2>}^{(n)}\big\|_{\cop \widetilde{Z}^{c,b}}\stackrel{n\to\infty}{\longrightarrow} \infty,  \quad \big\|\cl^{\circ,(n)}\big\|_{\cop \widetilde{Z}^{c,b} \to \widetilde{Z}^{c,b}} \stackrel{n\to\infty}{\longrightarrow} \infty, \quad \text{or} \quad  {\cop \cpti^{(n)}_{c,b}} \stackrel{n\to\infty}{\longrightarrow} \infty \quad \text{in} \ L^{\infty,-}(\Omega).
\end{equation}
In particular, for $\frac12<H\leq \frac{9}{16}$, the stochastic Schr{\"o}dinger problem \eqref{general-deter-nls} cannot be treated {\cop in the scale $\{\widetilde{Z}^{c,b}\}$} with the first-order strategy described in Section \ref{subsec:first-ord-stra}.
\end{corollary}

\begin{proof}

For none of the three explosions in \eqref{explosi} to happen, it is necessary that $b<2H-\frac12$ (Proposition \ref{prop:ice-cream-frac-intro}), $c\geq \frac32-2H$ (Proposition \ref{prop:q-sharp-optim-intro}) and $c< b- \frac14$ (Proposition \ref{prop:p-p-q}), which can be summed up as 
$$\frac32-2H \leq c < b-\frac14< 2H-\frac34,$$
and hence one must have $H>\frac{9}{16}$.
\end{proof}

\

These results thus call for the development of more sophisticated methods, such as paracontrolled or random-tensor-type strategies, in order to cover the whole range $H>\frac12$ for $\dot{B}$. To be more specific, we do not expect any possible improvement regarding the constraints on the Schr{\"o}dinger product tree (Proposition \ref{prop:ice-cream-frac-intro}), and we only advocate for a more sophisticated treatment of the product operation $z\mapsto {\cop \ci_\chi \cm(z,\<Psi>^{(n)})}$. These further developments could for instance be derived from a suitable \enquote{ansatz} formulation of the problem (see \cite[Section 5.2]{DNY}), which we plan to investigate in a future study.

\

Finally, it is worth noting that there is no hope to reach the case of a space-time white noise $\dot{B}$ through the present renormalization method, as it can be seen from the following explosion result {\cop (see Section \ref{sec:explo-white})}.

\begin{proposition}\label{prop:explo-white}
Assume that $H=\frac12$, that is $\dot{B}$ is a space-time white noise on $\R\times \mathbb{T}$. Then, with the notation of Section \ref{subsec:schro-prod-tree} and for every {\cop $b > \frac12$}, it holds that
$$\mathbb{E}\Big[\Big\|\widetilde{\<IPsi2>}^{(n)}\Big\|_{{\cop Z^{0,b}}}^2\Big] \stackrel{n\to\infty}{\longrightarrow} \infty.$$
\end{proposition}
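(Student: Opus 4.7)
The plan is to adapt the mechanism used to establish item $(iii)$ of Proposition \ref{prop:ice-cream-frac-intro}, pushing it to the borderline case $H=\frac12$ (where the threshold $2H-\frac12$ coincides with $\frac12$). First I observe that for $k\neq 0$ the two modes $\<Psi>^{(n)}_{k+k_1}$ and $\<Psi>^{(n)}_{k_1}$ are driven by independent Brownian motions $\beta^{(k+k_1)}$ and $\beta^{(k_1)}$, so $\mathbb{E}[\widetilde{\cm}(\<Psi>^{(n)},\<Psi>^{(n)})_k(t)]=0$ and the Wick subtraction in \eqref{def:sq-tree-renorm-intro} is automatically trivial. At $H=\frac12$, the explicit It\^o representation $\<Psi>^{(n)}_l(t)=-\imath\chi(t)\int_0^t\chi(s)e^{-\imath s l^2}d\beta^{(l)}_s$ (valid for $|l|\leq 2^n$) combined with the Wiener isometry yields the closed-form expression
\begin{equation*}
\mathbb{E}\big[|\cf(\widetilde{\<IPsi2>}^{(n)}_k)(\la)|^2\big]=\sum_{\substack{k_1\neq 0,\,k+k_1\neq 0 \\ |k_1|\vee|k+k_1|\leq 2^n}} \iint \chi(r)^2\chi(r')^2|K_2(\la,r\vee r')|^2\,dr\,dr',
\end{equation*}
where $K_2(\la,u):=\int dt\,e^{-\imath\la t}\chi(t)\int_u^t\chi(s)^3 e^{2\imath s kk_1}\,ds$ carries all the time dependence.

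The heart of the argument is a pointwise lower bound on $K_2$. Setting $\om:=2kk_1$ and integrating by parts once in $s$ against the oscillating factor $e^{\imath\om s}$, one obtains
\begin{equation*}
K_2(\la,u)=\frac{1}{\imath\om}\Big[\widehat{\chi^4}(\la-\om)-\chi(u)^3 e^{\imath\om u}H(u,\la)\Big]+O(\om^{-2}),\qquad H(u,\la):=\int_u^\infty \chi(t)e^{-\imath\la t}\,dt.
\end{equation*}
For $|\om|$ large and $\la$ in a bounded neighborhood of $\om$, the Schwartz function $\widehat{\chi^4}$ is of unit order while $H(u,\la)$ decays like $|\la|^{-1}\sim|\om|^{-1}$; moreover, after integration in $u$ against $\chi(u)^3\,du$ the cross term in $|K_2|^2$ picks up an additional $|\om|^{-1}$ by a stationary-phase-type IBP. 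Consequently the diagonal contribution dominates and
\begin{equation*}
\int d\la\,\langle\la\rangle^{2b}\iint\chi(r)^2\chi(r')^2|K_2(\la,r\vee r')|^2\,dr\,dr'\gtrsim \frac{1}{\om^2}\int d\la\,\langle\la\rangle^{2b}|\widehat{\chi^4}(\la-\om)|^2\gtrsim |\om|^{2b-2}
\end{equation*}
for $|\om|\gtrsim 1$, using the $O(1)$-concentration of $\widehat{\chi^4}$ around the origin.

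Assembling the pieces produces
\begin{equation*}
\mathbb{E}\Big[\big\|\widetilde{\<IPsi2>}^{(n)}\big\|_{X^b}^2\Big]\gtrsim \sum_{k\neq 0}\sum_{\substack{k_1\neq 0,\,k+k_1\neq 0\\|k_1|\vee|k+k_1|\leq 2^n}}|2kk_1|^{2b-2},
\end{equation*}
from which the explosion is immediate: at $b=\frac12$, restricting already to $k=1$ yields $\sum_{1\leq|k_1|\leq 2^n}(2|k_1|)^{-1}\sim\log 2^n\to\infty$, while for $b>\frac12$ one gets polynomial growth $\sim 2^{(2b-1)n}$. The main obstacle is this lower bound on $K_2$: one must verify that the two summands in the IBP bracket do not conspire to cancel after the $u$-integration, which is the step I expect to require the most care and which relies on their distinct oscillation patterns in $u$ (namely $e^{\imath\om u}$ versus constant).
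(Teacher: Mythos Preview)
Your strategy is sound and leads to the conclusion, but two points deserve tightening.

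First, after the integration by parts the leading term in $K_2(\la,u)$ is not the full transform $\widehat{\chi^4}(\la-\om)$ but rather the $u$-truncated integral
\[
F(\la-\om,u):=\int_u^{\infty} e^{-\imath(\la-\om)t}\,\chi(t)^4\,dt,
\]
since the outer $t$-integral in $K_2$ runs only over $t\geq u$ (for $r,r'\geq 0$). This does not harm the argument: for $|\la-\om|$ small and $u\in[0,1/2]$ one has $\text{Re}\,F(\la-\om,u)\geq \cos(c)\int_{1/2}^{1}\chi^4>0$, so $|F|$ is still of unit size. You should also note that your ``closed-form expression'' is, strictly speaking, only the contribution from the region $r,r'\geq 0$ (the $r,r'\leq 0$ region gives a symmetric term and the mixed region vanishes); as a lower bound this is of course enough.

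Second, the detour through ``integration in $u$ against $\chi(u)^3\,du$'' and a stationary-phase argument for the cross term is unnecessary and slightly misleading. For $\la\in[\om-c,\om+c]$ one has $|H(u,\la)|\lesssim |\la|^{-1}\sim\om^{-1}$ directly, so the second bracketed term is $O(\om^{-2})$; together with the remainder $R=O(\om^{-2})$ (which does require a second integration by parts in $s$, worth stating explicitly) this gives the \emph{pointwise} lower bound $|K_2(\la,u)|\gtrsim \om^{-1}$ for $u\in[0,1/2]$ and $\om$ large. Integrating over $\la\in[\om-c,\om+c]$ and $(r,r')\in[0,1/2]^2$ then yields $\gtrsim \om^{2b-2}$ without any cancellation analysis.

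Comparison with the paper: the paper does not give a proof but points to the machinery of item~$(iii)$ of Proposition~\ref{prop:ice-cream-frac-intro}, which at $H=\tfrac12$ would amount to replacing the weight $|\xi|^{1-2H}$ by a constant in the covariance formula \eqref{cova-frac} and then running the $Q^L=M^L+R^L$ decomposition. Your route via the explicit It\^o representation and Wiener isometry is a legitimate and arguably cleaner shortcut specific to $H=\tfrac12$; it bypasses the $\xi,\eta$-integrals and the $\cj_\ell$ analysis entirely, at the cost of being tied to the white-noise case.
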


\

\color{blue}

\begin{remark}
The heuristic considerations developed in Section \ref{subsec:first-ord-stra} could of course have been expressed in the same manner for the general nonlinear model \eqref{general-deter-nls-gene}. Thus, if $u^{(n)}$ denotes the solution to the approximate equation
\begin{equation}\label{general-deter-nls-approx-gene}
(\imath \partial_t-{\cop \partial^2_x})u^{(n)}=\lambda\, (u^{(n)})^{p} (\overline{u^{(n)}})^q+\dot{B}^{(n)}, \quad u^{(n)}(0,.)=0, \quad t\in \R, \ x\in \T.
\end{equation}
and if we similarly define
\begin{equation}\label{defi-luxo-gene}
\<Psi>^{(n)}:=\ci_\chi \big(e^{\imath  .{\cop \partial^2_x}}\dot{B}^{(n)}\big) ,  \quad v^{(n)}(t):=e^{\imath  t {\cop \partial^2_x}} u^{(n)}(t),
\end{equation}
then the problem \eqref{basic-equation0-quad} naturally generalizes into the form
\begin{equation}\label{basic-equation0}
v^{(n)}(t)= \<Psi>^{(n)}(t)+\ci_\chi \cm^{(p,q)}\big(v^{(n)},\ldots,v^{(n)}\big)(t),
\end{equation} 
where the product operator $\cm^{(p,q)}$ is defined in Fourier coordinates by the formula
\begin{align} 
&\cm^{(p,q)}(v^{(1)},\dots,v^{(p)},w^{(1)},\ldots,w^{(q)})_k(t)\nonumber\\
&=\sum_{m}\sum_{\substack{k_1,\ldots,k_p\\k_1+\ldots+k_p=m}}\sum_{\substack{\ell_1,\ldots,\ell_p\\ \ell_1+\ldots+\ell_p=m-k}}  e^{\imath t\Omega_{k,\underline{k},\underline{\ell}}} v^{(1)}_{k_1}(t)\cdots v^{(p)}_{k_p}(t) \overline{w^{(1)}_{\ell_1}}(t)\cdots \overline{w^{(q)}_{\ell_q}}(t), \label{cm-fourier-gene}
\end{align}
with
\begin{equation}\label{defi-omega-gene}
\underline{k}:=(k_1,\ldots,k_p), \ \underline{\ell}:=(\ell_1,\ldots,\ell_q) \quad \text{and} \quad \Omega_{k,\underline{k},\underline{\ell}}=\Omega_{k,\underline{k},\underline{\ell}}^{(p,q)}:=-k^2+(k_1^2+\ldots+k_p^2)-(\ell_1^2+\ldots+\ell_q^2).
\end{equation}
This leads naturally to the two questions $\mathbf{(P1)}$ and $\mathbf{(P2)}$, about the control of the stochastic process
$$
\ci_\chi \cm^{(p,q)}\big(\<Psi>^{(n)},\ldots,\<Psi>^{(n)}\big)
$$
on the one hand, and of the random linear operator
$$
\cl^{(n)}:z \longmapsto \ci_\chi \cm^{(p,q)}\big(z,\<Psi>^{(n)},\ldots,\<Psi>^{(n)}\big).
$$
It turns out, however, that the answers to these two questions depend significantly on the form of the nonlinearity, in other words, on the values of the integers $p,q\geq 0$. This is readily seen by examining the behavior of the resonance function $\Omega_{k,\underline{k},\underline{\ell}}$ at the core of expression \eqref{cm-fourier-gene}. For example, if $p=0$ and $q=2$, we have
$$\Omega_{k,\underline{k},\underline{\ell}}=-k^2-\ell_1^2-\ell_2^2.$$
This expression vanishes only in the very specific case where $k=\ell_1=\ell_2=0$, which, in the context of the operator $\ci_\chi \cm^{(p,q)}$, suggests a more pronounced regularizing effect than in the case $p=q=1$, and thereby leads to conclusions that differ from those of Propositions \ref{prop:ice-cream-frac-intro}, \ref{prop:q-sharp-optim-intro}, and \ref{prop:p-p-q} (see \cite{KPV,Tao} for a comparison of the regularizing effects in these two situations).

\end{remark}

\color{black}

\

\cop

\begin{remark}
Let us recall that the first-order approach considered here was successfully implemented in \cite{DFT} in the case $H>\frac58$. On the other hand, Corollaries \ref{coroz} and \eqref{coro:ztilde} rule out its applicability for $H <\frac{9}{16}$ within the scales $\{Z^{c,b}\}$ or $\{\widetilde{Z}^{c,b}\}$. At this stage, it remains unclear whether a refinement of the arguments in \cite{DFT} would allow one to address the remaining interval $\frac{9}{16}<H<\frac58$ by means of a first-order expansion of the same kind.
\end{remark}

\color{black}

\


\

The rest of the paper is organized as follows. In Section \ref{sec:schrod-prod-tree}, we focus on the analysis of the Schr{\"o}dinger product tree and on the proof of Proposition \ref{prop:ice-cream-frac-intro} {\cop (as well as Proposition \ref{prop:explo-white})}. Sections \ref{sec:prod-restr-space} and \ref{sec:item-ii} are then devoted to the study of the (stochastic) Schr{\"o}dinger product operator $\cl^{(n)}$: Section \ref{sec:prod-restr-space} contains the proof of Proposition \ref{prop:q-sharp-optim-intro}, while Section \ref{sec:item-ii} contains the proof of Proposition \ref{prop:p-p-q}. {\cop Finally, the appendix is divided into three parts: Section \ref{sec:proof-regu-luxo} contains a (partial) proof of Lemma \ref{lem:regu-luxo}, Section \ref{sec:z-z-ztilde} is devoted to the proof of the continuity of the map $z\mapsto \ci_\chi\cm(z,z)$ in the $\widetilde{Z}^{c,b}$-topology, and Section \ref{sec:discuss-p-n} consists in a discussion about the relevance of the approximation ${\cop \cpti^{(n)}_{c,b}}$.}

\

\cop

{\it In accordance with the above-described setting and results, we fix the Hurst parameter $H\in (\frac12,\frac34)$ for the whole study, except in Section \ref{sec:explo-white} - where $H=\frac12$.}
\color{black}

\section{Convergence of the Schr{\"o}dinger product tree}\label{sec:schrod-prod-tree}

We start with the study of the convergence issue for the renormalized Schr{\"o}dinger product tree
\begin{equation}\label{def:sq-tree-renorm}
\widetilde{\<IPsi2>}^{(n)}_k(t):=\ci_\chi \Big(\widetilde{\cm}( \<Psi>^{(n)},\<Psi>^{(n)})_k(.)-\mathbb{E}\big[\widetilde{\cm}( \<Psi>^{(n)},\<Psi>^{(n)})_k(.)\big]\Big)(t),
\end{equation}
introduced in Section \ref{subsec:schro-prod-tree}. We recall in particular that the (rescaled) linear solution $\<Psi>^{(n)}$ has been introduced in \eqref{defi-luxo}, while the renormalized product operator $\widetilde{\cm}$ is defined in Fourier coordinates by
\begin{equation} \label{cm-fourier}
\widetilde{\cm}(v,w)_k(t):=\1_{\{k\neq 0\}}\sum_{k_1\neq 0} e^{\imath t\Omega_{k,k_1}} v_{k+k_1} \overline{w_{k_1}}, \quad \text{with} \ \ \Omega_{k,k_1}:=|k+k_1|^2-|k_1|^2-|k|^2=2k k_1.
\end{equation}

As a preliminary step, let us point out some useful expressions and estimates for the covariance of the process $\<Psi>^{(n)}$, at the core of expression \eqref{def:sq-tree-renorm}.

\subsection{Covariance of the linear solution}

\

\smallskip

One has by definition
\begin{align*}
\cf(\<Psi>^{(n)}_k)(\la)&=\1_{\{\langle k\rangle \leq 2^n\}}\cf\big(\ci_\chi\big( e^{-\imath . k^2}\dot{\beta}^{(k)}\big)\big)(\la)\\
\end{align*}
and we can use the integration kernel $\ci_\chi(.,.)$ (see \eqref{i-chi-fou}) to express this quantity as
\begin{align*}
\cf(\<Psi>^{(n)}_k)(\la)
&=\1_{\{\langle k\rangle \leq 2^n\}}\int d\la_1\, \ci_\chi(\la,\la_1)\cf\big(e^{-\imath . k^2}\dot{\beta}^{(k)}\big)(\la_1)\\
&=\1_{\{\langle k\rangle \leq 2^n\}}\int d\la_1\, \ci_\chi(\la,\la_1)\int dt \, e^{-\imath (\la_1+k^2) t}\dot{\beta}^{(k)}_t.
\end{align*}
Based on this expression, and since {\cop (recall that $\frac12<H<\frac34$)}
$$\mathbb{E}\Big[\dot{\beta}^{(k)}_t \dot{\beta}^{(k')}_{t'}\Big]=\1_{\{k=k'\}} |t-t'|^{2H-2}{\cop =c\, \1_{\{k=k'\}} \int_{\R}\frac{d\xi}{|\xi|^{2H-1}} e^{-\imath \xi (t-t')} },$$
we get 
\begin{align}
&\mathbb{E}\Big[ \cf(\<Psi>^{(n)}_k)(\la)\overline{\cf(\<Psi>^{(n)}_{k'})(\la')}\Big]\nonumber\\
&=\1_{\{\langle k\rangle \leq 2^n\}}\1_{\{\langle k'\rangle \leq 2^n\}}\int d\la_1d\la_1'\, \ci_\chi(\la,\la_1)\overline{\ci_\chi(\la',\la_1')}\int dt dt' \, e^{-\imath (\la_1+|k|^2) t}e^{\imath (\la'_1+|k'|^2) t'}\mathbb{E}\Big[\dot{\beta}^{(k)}_t \dot{\beta}^{(k')}_{t'}\Big]\nonumber\\
&=c\, \1_{\{k=k'\}}\1_{\{\langle k\rangle \leq 2^n\}}\int_{\R} \frac{d\xi}{|\xi|^{2H-1}}\int d\la_1d\la_1'\, \ci_\chi(\la,\la_1)\overline{\ci_\chi(\la',\la_1')}\int dt dt' \, e^{-\imath (\la_1+|k|^2+\xi) t}e^{\imath (\la'_1+|k|^2+\xi) t'}\nonumber\\
&=c\, \1_{\{k=k'\}}\1_{\{\langle k\rangle \leq 2^n\}}\int_{\R} \frac{d\xi}{|\xi|^{2H-1}} \ci_\chi(\la,-\xi-k^2)\overline{\ci_\chi(\la',-\xi-k^2)}.\label{cova-frac}
\end{align}

\

For a more detailed expression of this quantity, let us introduce the following notation.
\begin{notation}\label{notation:xi}
For all $s,t,\la\in \R$, we set
$$\Xi_t(\la):=\int_{\R} dr\, e^{\imath \la r}\chi(r)\1_{[0,t]}(r)=\int_0^t dr\, e^{\imath \la r}\chi(r),$$
\begin{equation}\label{xi-up-t}
\Xi^t(\la):=\int_{\R} dr\, e^{\imath \la r}\chi(r)\1_{[0,r]}(t)= \1_{\{t\geq 0\}} \int_t^\infty dr\, e^{\imath \la r}\chi(r)-\1_{\{t<0\}} \int_{-\infty}^t dr\, e^{\imath \la r}\chi(r).\end{equation}
\end{notation}

\

With this notation, recall that
\begin{equation}\label{ci-xi}
\ci_\chi(\la,\la')=\int_{\R} dt\, e^{-\imath \la t}\chi(t)\int_{\R} ds \, e^{\imath \la' s} \chi(s)\1_{[0,t]}(s)=\int_{\R} dt\, e^{-\imath \la t}\chi(t)\, \Xi_t(\la'),
\end{equation}
and so we can rephrase \eqref{cova-frac} as
\begin{align}
&\mathbb{E}\Big[ \cf(\<Psi>^{(n)}_k)(\la)\overline{\cf(\<Psi>^{(n)}_{k'})(\la')}\Big]\nonumber\\
&=c\, \1_{\{k=k'\}}\1_{\{\langle k\rangle \leq 2^n\}}\int dt dt' \, \chi(t) \chi(t')e^{-\imath \la t}e^{\imath \la' t'}\int_{\R} \frac{d\xi}{|\xi|^{2H-1}}  \Xi_t(-\xi-k^2) \overline{\Xi_{t'}(-\xi-k^2)}.\label{cova-refin}
\end{align}

\

Besides, based on \eqref{ci-xi}, it is readily checked {\cop (see e.g. \cite[Lemma 4.1]{DNY})} that
\begin{equation}\label{ker-i}
\big|\ci_{{\cop \chi}}(\la,\la')\big|\lesssim \frac{1}{\langle \la\rangle \langle \la-\la'\rangle},
\end{equation}
and therefore the expression in \eqref{cova-frac} leads us to the following uniform estimate (with respect to $n$): for all $k,k'\in Z$ and $\la,\la'\in \R$,
\begin{align}
\bigg|\mathbb{E}\Big[ \cf(\<Psi>^{(n)}_k)(\la)\overline{\cf(\<Psi>^{(n)}_{k'})(\la')}\Big]\bigg|&\lesssim \1_{\{k=k'\}}\frac{1}{\langle \la\rangle \langle \la'\rangle}\int_{\R} \frac{d\xi}{|\xi|^{2H-1}}\frac{1}{\langle \xi+\la+k^2\rangle}\frac{1}{\langle \xi+\la'+k^2\rangle} \nonumber\\
&\lesssim \1_{\{k=k'\}}\frac{1}{\langle \la\rangle \langle \la'\rangle}\frac{1}{\langle \la-\la'\rangle^{1-\varepsilon}} \bigg[\frac{1}{\langle \la+k^2\rangle^{2H-1}}+\frac{1}{\langle \la'+k^2\rangle^{2H-1}}\bigg],\label{cova-frac-estim}
\end{align}
where the second inequality follows from \color{blue} Lemma \ref{lem:int-singu-zero} below.\color{black}

\

With similar arguments, we obtain that
\begin{align}
\mathbb{E}\Big[ \cf(\<Psi>^{(n)}_k)(\la)\cf(\<Psi>^{(n)}_{k'})(\la')\Big]
&=c\, \1_{\{k=k'\}}\1_{\{\langle k\rangle \leq 2^n\}}\int_{\R} \frac{d\xi}{|\xi|^{2H-1}} \ci_\chi(\la,-\xi-k^2)\ci_\chi(\la',\xi-k^2),\label{cova-frac-non-conj}
\end{align}
and then, for all $k,k'\in Z$ and $\la,\la'\in \R$,
\begin{align}
\bigg|\mathbb{E}\Big[ \cf(\<Psi>^{(n)}_k)(\la)\cf(\<Psi>^{(n)}_{k'})(\la')\Big]\bigg|&\lesssim \1_{\{k=k'\}}\frac{1}{\langle \la\rangle \langle \la'\rangle}\int_{\R} \frac{d\xi}{|\xi|^{2H-1}}\frac{1}{\langle \xi+\la+k^2\rangle}\frac{1}{\langle \xi-\la'-k^2\rangle} \nonumber\\
&\lesssim \1_{\{k=k'\}}\frac{1}{\langle \la\rangle \langle \la'\rangle}\frac{1}{\langle \la+\la'+2k^2\rangle^{1-\varepsilon}} \bigg[\frac{1}{\langle \la+k^2\rangle^{2H-1}}+\frac{1}{\langle \la'+k^2\rangle^{2H-1}}\bigg],\label{cova-frac-estim-non-conj}
\end{align}
{\cop where we have used again Lemma \ref{lem:int-singu-zero} to derive the second inequality}.

\

We are now in a position to tackle the proof of our main asymptotic result.

\subsection{Proof of Proposition \ref{prop:ice-cream-frac-intro}}

\

\smallskip

First, using the integration kernel $\ci_\chi(.,.)$ (see \eqref{i-chi-fou}) and the Fourier expression of $\widetilde{\cm}$ (see \eqref{cm-fourier}), we can write the Fourier transform of the process under consideration as
\begin{align*}
&\cf\big(\ci_\chi\widetilde{\cm}( \<Psi>^{(n)},\<Psi>^{(n)})_k\big)(\la)=\int_{\R} d\la' \, \ci_\chi(\la,\la') \cf\big(\widetilde{\cm}(\<Psi>^{(n)},\<Psi>^{(n)})_k\big)(\la')\\
&=\1_{\{k\neq 0\}}\sum_{k_1\neq 0} \int_{\R} d\la' \, \ci_\chi(\la,\la') \int dt\, e^{-\imath \la' t}e^{\imath t \Omega_{k,k_1}} \<Psi>^{(n)}_{k+k_1}(t) \overline{\<Psi>^{(n)}_{k_1}(t)}\\
&=\1_{\{k\neq 0\}}\sum_{k_1\neq 0}\int d\la_1 \, \overline{\cf(\<Psi>^{(n)}_{k_1})(\la_1)} \int d\la_2 \, \cf(\<Psi>^{(n)}_{k+k_1})(\la_2) \int_{\R} d\la' \, \ci_\chi(\la,\la')\int dt\, e^{-\imath \la' t}e^{\imath t\Omega_{k,k_1}} e^{\imath t\la_2}e^{-\imath t \la_1},
\end{align*}
and thus
\begin{equation*}
\cf\big(\ci_\chi\big(\widetilde{\cm}( \<Psi>^{(n)},\<Psi>^{(n)})_k\big)\big)(\la)=\1_{\{k\neq 0\}}\sum_{k_1\neq 0}\int d\la_1d\la_2 \, \ci_\chi(\la,\Omega_{k,k_1}+\la_2-\la_1) \overline{\cf(\<Psi>^{(n)}_{k_1})(\la_1)} \cf(\<Psi>^{(n)}_{k+k_1})(\la_2).
\end{equation*}

Based on this expression, and by applying Wick's formula, we can compute
\begin{align}
&\mathbb{E}\Big[\big| \cf \big(\ci_\chi \big(\widetilde{\cm}( \<Psi>^{(n)},\<Psi>^{(n)})_k\big)(\la)\big|^2\Big]\nonumber\\
&=\1_{\{k\neq 0\}}\sum_{k_1\neq 0}\int d\la_1d\la_2 \sum_{k'_1\neq 0}\int d\la'_1d\la'_2  \ \ci_\chi(\la,\Omega_{k,k_1}+\la_2-\la_1) \overline{\ci_\chi(\la,\Omega_{k,k'_1}+\la'_2-\la'_1)}\nonumber \\
&\hspace{3cm}\mathbb{E}\Big[\overline{\cf(\<Psi>^{(n)}_{k_1})(\la_1)} \cf(\<Psi>^{(n)}_{k+k_1})(\la_2)\cf(\<Psi>^{(n)}_{k'_1})(\la'_1) \overline{\cf(\<Psi>^{(n)}_{k+k_1'})(\la'_2)}\Big]\nonumber\\
&=\Big|\mathbb{E}\big[\cf \big(\ci_\chi \big(\widetilde{\cm}( \<Psi>^{(n)},\<Psi>^{(n)})_k\big)(\la)\big]\Big|^2\nonumber\\
&\hspace{0.5cm}+\1_{\{k\neq 0\}}\sum_{k_1\neq 0}\int d\la_1d\la_2 \sum_{k'_1\neq 0}\int d\la'_1d\la'_2  \ \ci_\chi(\la,\Omega_{k,k_1}+\la_2-\la_1) \overline{\ci_\chi(\la,\Omega_{k,k'_1}+\la'_2-\la'_1)}\nonumber \\
&\hspace{3.5cm}\mathbb{E}\Big[\overline{\cf(\<Psi>^{(n)}_{k_1})(\la_1)} \cf(\<Psi>^{(n)}_{k'_1})(\la'_1) \Big]\mathbb{E}\Big[ \cf(\<Psi>^{(n)}_{k+k_1})(\la_2)\overline{\cf(\<Psi>^{(n)}_{k+k_1'})(\la'_2)}\Big]\nonumber\\
&\hspace{0.5cm}+\1_{\{k\neq 0\}}\sum_{k_1\neq 0}\int d\la_1d\la_2 \sum_{k'_1\neq 0}\int d\la'_1d\la'_2  \ \ci_\chi(\la,\Omega_{k,k_1}+\la_2-\la_1) \overline{\ci_\chi(\la,\Omega_{k,k'_1}+\la'_2-\la'_1)}\nonumber \\
&\hspace{3.5cm}\mathbb{E}\Big[\overline{\cf(\<Psi>^{(n)}_{k_1})(\la_1)} \overline{\cf(\<Psi>^{(n)}_{k+k_1'})(\la'_2)}\Big]\mathbb{E}\Big[\cf(\<Psi>^{(n)}_{k+k_1})(\la_2)\cf(\<Psi>^{(n)}_{k'_1})(\la'_1) \Big].\label{expa-ice-cream}
\end{align}

\

As far as the last term of this expansion is concerned, observe that according to \eqref{cova-frac-non-conj}, one has
\begin{align*}
&\1_{\{k\neq 0\}}\mathbb{E}\Big[\overline{\cf(\<Psi>^{(n)}_{k_1})(\la_1)} \overline{\cf(\<Psi>^{(n)}_{k+k_1'})(\la'_2)}\Big]\mathbb{E}\Big[\cf(\<Psi>^{(n)}_{k+k_1})(\la_2)\cf(\<Psi>^{(n)}_{k'_1})(\la'_1) \Big]\\
&=\1_{\{k\neq 0\}}\1_{\{k_1=k+k_1'\}} \1_{\{k+k_1=k_1'\}}  \mathbb{E}\Big[\overline{\cf(\<Psi>^{(n)}_{k_1})(\la_1)} \overline{\cf(\<Psi>^{(n)}_{k+k_1'})(\la'_2)}\Big]\mathbb{E}\Big[\cf(\<Psi>^{(n)}_{k+k_1})(\la_2)\cf(\<Psi>^{(n)}_{k'_1})(\la'_1) \Big]\\
&=\1_{\{k\neq 0\}}\1_{\{k_1=k+k_1'\}} \1_{\{2k+k_1'=k_1'\}}  \mathbb{E}\Big[\overline{\cf(\<Psi>^{(n)}_{k_1})(\la_1)} \overline{\cf(\<Psi>^{(n)}_{k+k_1'})(\la'_2)}\Big]\mathbb{E}\Big[\cf(\<Psi>^{(n)}_{k+k_1})(\la_2)\cf(\<Psi>^{(n)}_{k'_1})(\la'_1) \Big]\ =0.
\end{align*}
For the same reason,
\begin{align*}
&\mathbb{E}\Big[\overline{\cf(\<Psi>^{(n)}_{k_1})(\la_1)} \cf(\<Psi>^{(n)}_{k'_1})(\la'_1) \Big]\mathbb{E}\Big[ \cf(\<Psi>^{(n)}_{k+k_1})(\la_2)\overline{\cf(\<Psi>^{(n)}_{k+k_1'})(\la'_2)}\Big]\\
&=\1_{\{k_1=k_1'\}}\mathbb{E}\Big[\overline{\cf(\<Psi>^{(n)}_{k_1})(\la_1)} \cf(\<Psi>^{(n)}_{k_1})(\la'_1) \Big]\mathbb{E}\Big[ \cf(\<Psi>^{(n)}_{k+k_1})(\la_2)\overline{\cf(\<Psi>^{(n)}_{k+k_1})(\la'_2)}\Big].
\end{align*}

Thus, going back to \eqref{expa-ice-cream} and recalling the definition \eqref{def:sq-tree-renorm} of $\widetilde{\<IPsi2>}^{(n)}$, we deduce that
\begin{align}
&\mathbb{E}\Big[\big| \cf \big(\widetilde{\<IPsi2>}^{(n)}_k\big)(\la)\big|^2 \Big]=\mathbb{E}\Big[\big| \cf \big(\ci_\chi \big(\widetilde{\cm}( \<Psi>^{(n)},\<Psi>^{(n)})_k\big)(\la)\big|^2\Big]-\Big|\mathbb{E}\big[\cf \big(\ci_\chi \big(\widetilde{\cm}( \<Psi>^{(n)},\<Psi>^{(n)})_k\big)(\la)\big]\Big|^2\nonumber\\
&=\1_{\{k\neq 0\}}\sum_{k_1\neq 0}\int d\la_1d\la_2 \int d\la'_1d\la'_2  \ \ci_\chi(\la,\Omega_{k,k_1}+\la_2-\la_1) \overline{\ci_\chi(\la,\Omega_{k,k_1}+\la'_2-\la'_1)} \nonumber\\
&\hspace{5.5cm}\mathbb{E}\Big[\overline{\cf(\<Psi>^{(n)}_{k_1})(\la_1)} \cf(\<Psi>^{(n)}_{k_1})(\la'_1) \Big]\mathbb{E}\Big[ \cf(\<Psi>^{(n)}_{k+k_1})(\la_2)\overline{\cf(\<Psi>^{(n)}_{k+k_1})(\la'_2)}\Big].\label{cova-ice-cream}
\end{align}

\

\subsubsection{\cop Proof of Proposition \ref{prop:ice-cream-frac-intro}, item $(i)$}

\

\smallskip

Using the estimates \eqref{ker-i} and \eqref{cova-frac-estim}, we get that for all $0\leq c<\frac12$ and $\varepsilon>0$ small enough,
\begin{align*}
&\mathbb{E}\Big[\big\|\widetilde{\<IPsi2>}^{(n)}\big\|^2_{\cop Z^{c,0}}\Big]\\
&\lesssim \sum_{k\neq 0}\sum_{k_1\neq 0} \langle k\rangle^{2c}\int \frac{d\la}{\langle \la\rangle^2} \int \frac{d\la_1}{\langle \la_1\rangle}\frac{d\la_2}{\langle \la_2\rangle}  \frac{1}{\langle \la-\Omega_{k,k_1}-\la_2+\la_1\rangle} \int \frac{d\la_1'}{\langle \la_1'\rangle} \frac{d\la_2'}{\langle \la_2'\rangle}\frac{1}{\langle \la-\Omega_{k,k_1}-\la_2'+\la_1'\rangle}\\
&\lesssim \sum_{k\neq 0}\sum_{k_1\neq 0} \langle k\rangle^{2c}\int \frac{d\la}{\langle \la\rangle^2} \bigg|\int \frac{d\la_1}{\langle \la_1\rangle}\int \frac{d\la_2}{\langle \la_2\rangle}  \frac{1}{\langle \la-\Omega_{k,k_1}-\la_2+\la_1\rangle}\bigg|^2.
\end{align*}
{\cop By applying Lemma \ref{lem:gtv} below, we deduce}
\begin{align*}
&\mathbb{E}\Big[\big\|\widetilde{\<IPsi2>}^{(n)}\big\|^2_{\cop Z^{c,0}}\Big]\\
&\lesssim \sum_{k\neq 0} \sum_{k_1\neq 0}\langle k\rangle^{2c}\int \frac{d\la}{\langle \la\rangle^2} \bigg|\int \frac{d\la_1}{\langle \la_1\rangle}  \frac{1}{\langle \la-\Omega_{k,k_1}+\la_1\rangle^{1-\frac{\varepsilon}{2}}}\bigg|^2 \\
&\lesssim \sum_{k\neq 0} \sum_{k_1\neq 0}\langle k\rangle^{2c}\int \frac{d\la}{\langle \la\rangle^2} \frac{1}{\langle \la-\Omega_{k,k_1}\rangle^{2-2\varepsilon}} \lesssim \sum_{k\neq 0} \langle k\rangle^{2c} \sum_{k_1\neq 0} \frac{1}{\langle \Omega_{k,k_1}\rangle^{2-2\varepsilon}}\lesssim \sum_k \frac{1}{\langle k\rangle^{2-2c-2\varepsilon}}\sum_{k_1}\frac{1}{\langle k_1\rangle^{2-2\varepsilon}} <\infty,
\end{align*}
which already proves item $(i)$.

\

Let us now focus on the norm in ${\cop Z^{0,b}}$, which, thanks to \eqref{cova-ice-cream}, can be expanded as
\begin{align*}
&\mathbb{E}\Big[\big\|\widetilde{\<IPsi2>}^{(n)}\big\|^2_{\cop Z^{0,b}} \Big]=\sum_{k\neq 0}\int d\la \, \langle \la \rangle^{2b}\sum_{k_1\neq 0}\int d\la_1d\la_2 \int d\la'_1d\la'_2  \ \ci_\chi(\la,\Omega_{k,k_1}+\la_2-\la_1) \overline{\ci_\chi(\la,\Omega_{k,k_1}+\la'_2-\la'_1)} \\
&\hspace{6.5cm}\mathbb{E}\Big[\overline{\cf(\<Psi>^{(n)}_{k_1})(\la_1)} \cf(\<Psi>^{(n)}_{k_1})(\la'_1) \Big]\mathbb{E}\Big[ \cf(\<Psi>^{(n)}_{k+k_1})(\la_2)\overline{\cf(\<Psi>^{(n)}_{k+k_1})(\la'_2)}\Big].
\end{align*}

\

\subsubsection{\cop Proof of Proposition \ref{prop:ice-cream-frac-intro}, item $(ii)$}\label{subsec:item-ii}

Assume here that $\frac12 <b < 2H-\frac12$.

\smallskip

Using the estimates \eqref{ker-i} and \eqref{cova-frac-estim}, we get that
\begin{align*}
&\mathbb{E}\Big[\big\|\widetilde{\<IPsi2>}^{(n)}\big\|^2_{\cop Z^{0,b}} \Big]\\
&\lesssim \sum_{k\neq 0}\int \frac{d\la}{\langle \la\rangle^{2-2b}} \sum_{k_1\neq 0}\int \frac{d\la_1}{\langle \la_1\rangle} \frac{d\la_2}{\langle \la_2\rangle} \int \frac{d\la'_1}{\langle \la_1'\rangle} \frac{d\la'_2}{\langle \la_2'\rangle}   \frac{1}{\langle \la-\Omega_{k,k_1}-\la_2+\la_1\rangle}  \frac{1}{\langle \la-\Omega_{k,k_1}-\la_2'+\la_1'\rangle} \\
&\hspace{1.5cm}\bigg[ \frac{1}{\langle \la_1+k_1^2\rangle^{2H-1}}+\frac{1}{\langle \la'_1+k_1^2\rangle^{2H-1}}\bigg] \bigg[ \frac{1}{\langle \la_2+(k+k_1)^2\rangle^{2H-1}}+\frac{1}{\langle \la'_2+(k+k_1)^2\rangle^{2H-1}}\bigg]\\
&\lesssim \mathbb{A}_b+\mathbb{B}_b,
\end{align*}
with
\begin{align*}
&\mathbb{A}_b:=\sum_{k\neq 0}\sum_{k_1\neq 0}\int \frac{d\la}{\langle \la\rangle^{2-2b}} \int \frac{d\la_1}{\langle \la_1\rangle} \frac{d\la_2}{\langle \la_2\rangle}    \frac{1}{\langle \la-\Omega_{k,k_1}-\la_2+\la_1\rangle}\frac{1}{\langle \la_1+k_1^2\rangle^{2H-1}} \frac{1}{\langle \la_2+(k+k_1)^2\rangle^{2H-1}} \\
&\hspace{4.5cm} \int \frac{d\la'_1}{\langle \la_1'\rangle} \frac{d\la'_2}{\langle \la_2'\rangle} \frac{1}{\langle \la-\Omega_{k,k_1}-\la_2'+\la_1'\rangle} \\
\end{align*}
and
\begin{align*}
&\mathbb{B}_b:=\sum_{k\neq 0}\sum_{k_1\neq 0}\int \frac{d\la}{\langle \la\rangle^{2-2b}} \int \frac{d\la_1}{\langle \la_1\rangle} \frac{d\la_2}{\langle \la_2\rangle} \frac{1}{\langle \la-\Omega_{k,k_1}-\la_2+\la_1\rangle} \frac{1}{\langle \la_1+k_1^2\rangle^{2H-1}}  \\
&\hspace{4.5cm}\int \frac{d\la'_1}{\langle \la_1'\rangle} \frac{d\la'_2}{\langle \la_2'\rangle}   \frac{1}{\langle \la-\Omega_{k,k_1}-\la_2'+\la_1'\rangle}\frac{1}{\langle \la'_2+(k+k_1)^2\rangle^{2H-1}}.
\end{align*}
Since $\mathbb{A}_b$ and $\mathbb{B}_b$ no longer depend on $n$, we only need to prove that these two quantities are finite.

\

For $\mathbb{A}_b$, note first that {\cop by Lemma \ref{lem:gtv},}
$$\int \frac{d\la'_1}{\langle \la_1'\rangle} \frac{d\la'_2}{\langle \la_2'\rangle} \frac{1}{\langle \la-\Omega_{k,k_1}-\la_2'+\la_1'\rangle}  \lesssim \int \frac{d\la'_1}{\langle \la_1'\rangle}  \frac{1}{\langle \la-\Omega_{k,k_1}+\la_1'\rangle^{1-\frac{\varepsilon}{2}}}\lesssim \frac{1}{\langle \la-\Omega_{k,k_1}\rangle^{1-\varepsilon}}.$$
On the other hand, 
\begin{align*}
&\int \frac{d\la_1}{\langle \la_1\rangle} \frac{d\la_2}{\langle \la_2\rangle}    \frac{1}{\langle \la-\Omega_{k,k_1}-\la_2+\la_1\rangle}\frac{1}{\langle \la_1+k_1^2\rangle^{2H-1}} \frac{1}{\langle \la_2+(k+k_1)^2\rangle^{2H-1}} \\
&\lesssim \int \frac{d\la_1}{\langle \la_1\rangle} \frac{1}{\langle \la_1+k_1^2\rangle^{2H-1}} \int \frac{d\la_2}{\langle \la_2\rangle} \frac{1}{\langle \la_2+(k+k_1)^2\rangle^{2H-1}}\lesssim \frac{1}{\langle k_1\rangle^{4H-2-\varepsilon}}\frac{1}{\langle k+ k_1\rangle^{4H-2-\varepsilon}},
\end{align*}
{\cop where we have used again Lemma \ref{lem:gtv} to derive the last inequality.} Thus, going back to the definition of $\mathbb{A}_b$, we obtain that for $\varepsilon >0$ small enough,
\begin{align}
\mathbb{A}_b&\lesssim \sum_{k_1\neq 0}\frac{1}{\langle k_1\rangle^{4H-2-\varepsilon}}\sum_{k\neq 0}\frac{1}{\langle k+ k_1\rangle^{4H-2-\varepsilon}}\int \frac{d\la}{\langle \la\rangle^{2-2b}} \frac{1}{\langle \la-\Omega_{k,k_1}\rangle^{1-\varepsilon}}\label{a-b}\\
&\lesssim \sum_{k_1\neq 0}\frac{1}{\langle k_1\rangle^{4H-2b-2\varepsilon}}\sum_{k\neq 0}\frac{1}{\langle k+ k_1\rangle^{4H-2-\varepsilon}}\frac{1}{\langle k\rangle^{2-2b-\varepsilon}},\nonumber
\end{align}
{\cop due to Lemma \ref{lem:gtv} and the fact that $b>\frac12$.} Since $b<2H-\frac12$, one has $4H-2b>1$, and so {\cop by Lemma \ref{lem:gtv} again},
\begin{align*}
\mathbb{A}_b &\lesssim \sum_{k_1\neq 0}\frac{1}{\langle k_1\rangle^{8H-4b-1-4\varepsilon}} <\infty,
\end{align*}
for $\varepsilon >0$ small enough.

\

As far as $\mathbb{B}_b$ is concerned, one has {\cop by repeated applications of Lemma \ref{lem:gtv},}
\begin{align*}
&\int \frac{d\la_1}{\langle \la_1\rangle} \frac{d\la_2}{\langle \la_2\rangle} \frac{1}{\langle \la-\Omega_{k,k_1}-\la_2+\la_1\rangle}\frac{1}{\langle \la_1+k_1^2\rangle^{2H-1}} \\
&\lesssim \int \frac{d\la_1}{\langle \la_1\rangle} \frac{1}{\langle \la_1+k_1^2\rangle^{2H-1}} \frac{1}{\langle \la-\Omega_{k,k_1}+\la_1\rangle^{1-\varepsilon}}\\
&\lesssim \bigg( \int \frac{d\la_1}{\langle \la_1\rangle} \frac{1}{\langle \la_1+k_1^2\rangle^{4H-2}} \bigg)^{\frac12} \bigg( \int \frac{d\la_1}{\langle \la_1\rangle}  \frac{1}{\langle \la-\Omega_{k,k_1}+\la_1\rangle^{2-2\varepsilon}}\bigg)^{\frac12}\lesssim \frac{1}{\langle k_1\rangle^{4H-2-\varepsilon}}\frac{1}{\langle \la-\Omega_{k,k_1}\rangle^{\frac12}}
\end{align*}
and in the same way
\begin{align*}
&\int \frac{d\la'_1}{\langle \la_1'\rangle} \frac{d\la'_2}{\langle \la_2'\rangle}   \frac{1}{\langle \la-\Omega_{k,k_1}-\la_2'+\la_1'\rangle}\frac{1}{\langle \la'_2+(k+k_1)^2\rangle^{2H-1}}\lesssim \frac{1}{\langle k+k_1\rangle^{4H-2-\varepsilon}}\frac{1}{\langle \la-\Omega_{k,k_1}\rangle^{\frac12}},
\end{align*}
which gives
\begin{align*}
\mathbb{B}_b&\lesssim \sum_{k_1\neq 0}\frac{1}{\langle k_1\rangle^{4H-2-\varepsilon}}\sum_{k\neq 0}\frac{1}{\langle k+ k_1\rangle^{4H-2-\varepsilon}}\int \frac{d\la}{\langle \la\rangle^{2-2b}} \frac{1}{\langle \la-\Omega_{k,k_1}\rangle}.
\end{align*}
Thus, we are in the same position as in \eqref{a-b}, and we can use the same arguments to assert that $\mathbb{B}_b<\infty$.

\smallskip

This achieves to prove that for every $b$ such that $\frac12<b<2H-\frac12$, one has
$$\sup_{n\geq 1} \mathbb{E}\Big[\Big\|\widetilde{\<IPsi2>}^{(n)}\Big\|^2_{\cop Z^{0,b}}\Big] <\infty,$$
as desired.

\

\

\subsubsection{\cop Proof of Proposition \ref{prop:ice-cream-frac-intro}, item $(iii)$} Assume now that {\cop $b+\frac{c}{2} = 2H-\frac12$}.

\smallskip

Using the expression in \eqref{cova-refin}, we can write

\begin{align}
&\mathbb{E}\Big[\overline{\cf(\<Psi>^{(n)}_{k_1})(\la_1)} \cf(\<Psi>^{(n)}_{k_1})(\la'_1) \Big]\mathbb{E}\Big[ \cf(\<Psi>^{(n)}_{k+k_1})(\la_2)\overline{\cf(\<Psi>^{(n)}_{k+k_1})(\la'_2)}\Big]\nonumber\\
&=\1_{\{\langle k_1\rangle\leq 2^n\}}\int_{\R} \frac{d\xi}{|\xi|^{2H-1}} \int dt dt' \, \chi(t) \chi(t')e^{\imath \la_1 t}e^{-\imath \la_1' t'} \overline{\Xi_t(-\xi-k_1^2)} \Xi_{t'}(-\xi-k_1^2)\nonumber\\
&\hspace{0.5cm}\1_{\{\langle k+k_1\rangle\leq 2^n\}}\int_{\R} \frac{d\eta}{|\eta|^{2H-1}} \int ds ds' \, \chi(s) \chi(s')e^{-\imath \la_2 s}e^{\imath \la_2' s'} \Xi_s(-\eta-(k+k_1)^2) \overline{\Xi_{s'}(-\eta-(k+k_1)^2)}.\label{pro-1}
\end{align}
Then observe that by \eqref{ci-xi},
\begin{align}
&\int d\la_1d\la_2\,  \ci_\chi(\la,\Omega_{k,k_1}+\la_2-\la_1) e^{\imath \la_1 t}e^{-\imath \la_2 s}\nonumber\\
&=\int_{\R} dv\, e^{-\imath \la v}\chi(v) \int dr \, \1_{[0,v]}(r)\chi(r) \int d\la_1d\la_2\, e^{\imath r (\Omega_{k,k_1}+\la_2-\la_1)}e^{\imath \la_1 t}e^{-\imath \la_2 s}\nonumber\\
&=\int_{\R} dv\, e^{-\imath \la v}\chi(v)  \int dr \, \1_{[0,v]}(r)\chi(r)e^{\imath r \Omega_{k,k_1}}\bigg(\int d\la_1\, e^{-\imath r \la_1}e^{\imath \la_1 t}\bigg)\bigg(\int d\la_2\, e^{\imath r \la_2}e^{-\imath \la_2 s}\bigg)\nonumber\\
&=\delta_{\{s=t\}}\chi(t)e^{\imath t \Omega_{k,k_1}}\int_{\R} dv\, e^{-\imath \la v}\chi(v)  \1_{[0,v]}(t),\label{pro-2}
\end{align}
and in a similar way
\begin{align}
 \int d\la'_1d\la'_2  \,\overline{\ci_\chi(\la,\Omega_{k,k_1}+\la'_2-\la'_1)}e^{-\imath \la_1' t'}e^{\imath \la_2' s'}
&=\delta_{\{s'=t'\}}\chi(t')e^{-\imath t' \Omega_{k,k_1}}\int_{\R} dw\, e^{\imath \la w}\chi(w)  \1_{[0,w]}(t').\label{pro-3}
\end{align}

\

Combining \eqref{pro-1}, \eqref{pro-2} and \eqref{pro-3}, we obtain that
\begin{align*}
&\int d\la_1d\la_2 \int d\la'_1d\la'_2  \ \ci_\chi(\la,\Omega_{k,k_1}+\la_2-\la_1) \overline{\ci_\chi(\la,\Omega_{k,k_1}+\la'_2-\la'_1)} \\
&\hspace{4.5cm}\mathbb{E}\Big[\overline{\cf(\<Psi>^{(n)}_{k_1})(\la_1)} \cf(\<Psi>^{(n)}_{k_1})(\la'_1) \Big]\mathbb{E}\Big[ \cf(\<Psi>^{(n)}_{k+k_1})(\la_2)\overline{\cf(\<Psi>^{(n)}_{k+k_1})(\la'_2)}\Big]\\
&={\cop \1_{\{\langle k_1\rangle\leq 2^n\}} \1_{\{\langle k+k_1\rangle\leq 2^n\}}}\int_{\R} \frac{d\xi}{|\xi|^{2H-1}} \int_{\R} \frac{d\eta}{|\eta|^{2H-1}}\\
& \hspace{1cm}\bigg(\int_{\R} dv\, e^{-\imath \la v}\chi(v) \int dt  \, \1_{[0,v]}(t)e^{\imath t \Omega_{k,k_1}}\chi(t)^3 \overline{\Xi_t(-\xi-k_1^2)} \Xi_t(-\eta-(k+k_1)^2) \bigg)\\
&\hspace{2cm}\bigg(\int_{\R} dw\, e^{\imath \la w}\chi(w)\int  dt' \,  \1_{[0,w]}(t')e^{-\imath t' \Omega_{k,k_1}} \chi(t')^3\Xi_{t'}(-\xi-k_1^2) \overline{\Xi_{t'}(-\eta-(k+k_1)^2)} \bigg)
\end{align*}
and so, going back to \eqref{cova-ice-cream}, we get the expression
\begin{equation}\label{expression-q}
\mathbb{E}\Big[\big\|\widetilde{\<IPsi2>}^{(n)}\big\|^2_{\cop Z^{c,b}} \Big]=\sum_{k\neq 0}{\cop \langle k\rangle^{2c}} \sum_{k_1\neq 0} \1_{\{\langle k_1\rangle\leq 2^n\}} \1_{\{\langle k+k_1\rangle\leq 2^n\}}\int \frac{d\xi}{|\xi|^{2H-1}} \int \frac{d\eta}{|\eta|^{2H-1}} \int d\la \, \langle \la \rangle^{2b} \big|Q^{\Omega_{k,k_1}}_{\xi+k_1^2,\eta+(k+k_1)^2}(\la)\big|^2, 
\end{equation}
where we have set
\begin{align}
Q^L_{\beta,\beta'}(\la):=\int dt\, e^{-\imath \la t} \chi(t)\int_0^t ds \, e^{\imath s L} \chi(s)^3 \overline{\Xi_s(-\beta)}\Xi_s(-\beta').\label{defi:qlbeta}
\end{align}
In particular, 
\begin{align*}
&\mathbb{E}\Big[\big\|\widetilde{\<IPsi2>}^{(n)}\big\|^2_{\cop Z^{c,b}} \Big]
\geq \sum_{k\geq 1} {\cop \langle k\rangle^{2c}}\sum_{k_1\geq 1} \1_{\{\langle k_1\rangle\leq 2^n\}} \1_{\{\langle k+k_1\rangle\leq 2^n\}} \int \frac{d\xi}{|\xi|^{2H-1}} \int \frac{d\eta}{|\eta|^{2H-1}} \int d\la \, | \la |^{2b} \big|Q^{\Omega_{k,k_1}}_{\xi+k_1^2,\eta+(k+k_1)^2}(\la)\big|^2 \\
&\geq \sum_{k\geq 1}{\cop \langle k\rangle^{2c}} \sum_{k_1\geq 1}\1_{\{\langle k_1\rangle\leq 2^n\}} \1_{\{\langle k+k_1\rangle\leq 2^n\}}{\cop |\Omega_{k,k_1}|^{1+2b}}  \int \frac{d\xi}{|\xi|^{2H-1}} \int \frac{d\eta}{|\eta|^{2H-1}} \int d\la \, | \la |^{2b} \big|Q^{\Omega_{k,k_1}}_{\xi+k_1^2,\eta+(k+k_1)^2}(\la \Omega_{k,k_1})\big|^2 .
\end{align*}
For all $L\geq 1$, we can use an integration-by-parts argument to decompose $Q^{L}_{\beta,\beta'}(\la L)$ as
\begin{align}
Q^{L}_{\beta,\beta'}(\la L)&=\int dt\, e^{-\imath \la Lt} \chi(t)\int_0^t ds \, e^{\imath s L} \chi(s)^3 \overline{\Xi_s(-\beta)}\Xi_s(-\beta')=M^{L}_{\beta,\beta'}(\la )+R^{L}_{\beta,\beta'}(\la ),\label{decompoqlbeta}
\end{align}
where
\begin{equation}\label{defin-m-l}
M^{L}_{\beta,\beta'}(\la ):=\frac{1}{\imath \la L}\int dt\, e^{-\imath Lt(\la-1)} \chi(t)^4\,  \overline{\Xi_t(-\beta)}\Xi_t(-\beta')
\end{equation}
and
\begin{equation}\label{defin-r-l}
R^{L}_{\beta,\beta'}(\la ):=\frac{1}{\imath \la L}\int dt\, e^{-\imath \la Lt} \chi'(t)\int_0^t ds \, e^{\imath s L} \chi(s)^3 \overline{\Xi_s(-\beta)}\Xi_s(-\beta').
\end{equation}

Using this decomposition, we can write
\small
\begin{align*}
&\mathbb{E}\Big[\big\|\widetilde{\<IPsi2>}^{(n)}\big\|^2_{\cop Z^{c,b}} \Big]\geq \sum_{k\geq 1}{\cop \langle k\rangle^{2c}} \sum_{k_1\geq 1}\1_{\{\langle k_1\rangle\leq 2^n\}} \1_{\{\langle k+k_1\rangle\leq 2^n\}}{\cop |\Omega_{k,k_1}|^{1+2b}}  \int \frac{d\xi}{|\xi|^{2H-1}} \int \frac{d\eta}{|\eta|^{2H-1}} \int d\la \, | \la |^{2b} \big|M^{\Omega_{k,k_1}}_{\xi+k_1^2,\eta+(k+k_1)^2}(\la )\big|^2 \\
&\hspace{2cm}+\sum_{k\geq 1}{\cop \langle k\rangle^{2c}} \sum_{k_1\geq 1}\1_{\{\langle k_1\rangle\leq 2^n\}} \1_{\{\langle k+k_1\rangle\leq 2^n\}}{\cop |\Omega_{k,k_1}|^{1+2b}}  \int \frac{d\xi}{|\xi|^{2H-1}} \int \frac{d\eta}{|\eta|^{2H-1}} \int d\la \, | \la |^{2b} \big|R^{\Omega_{k,k_1}}_{\xi+k_1^2,\eta+(k+k_1)^2}(\la )\big|^2\\
&\hspace{1.5cm}-2\sum_{k\geq 1}{\cop \langle k\rangle^{2c}} \sum_{k_1\geq 1}{\cop |\Omega_{k,k_1}|^{1+2b}}  \int \frac{d\xi}{|\xi|^{2H-1}} \int \frac{d\eta}{|\eta|^{2H-1}} \int d\la \, | \la |^{2b} \big|M^{\Omega_{k,k_1}}_{\xi+k_1^2,\eta+(k+k_1)^2}(\la )\big| \big|R^{\Omega_{k,k_1}}_{\xi+k_1^2,\eta+(k+k_1)^2}(\la)\big|\\
&\geq \sum_{k\geq 1}{\cop \langle k\rangle^{2c}} \sum_{k_1\geq 1}\1_{\{\langle k_1\rangle\leq 2^n\}} \1_{\{\langle k+k_1\rangle\leq 2^n\}}{\cop |\Omega_{k,k_1}|^{1+2b}}  \int \frac{d\xi}{|\xi|^{2H-1}} \int \frac{d\eta}{|\eta|^{2H-1}} \int d\la \, | \la |^{2b} \big|M^{\Omega_{k,k_1}}_{\xi+k_1^2,\eta+(k+k_1)^2}(\la )\big|^2 \\
&\hspace{0.5cm}-2\sum_{k\geq 1}{\cop \langle k\rangle^{2c}} \sum_{k_1\geq 1}{\cop |\Omega_{k,k_1}|^{1+2b}}  \int \frac{d\xi}{|\xi|^{2H-1}} \int \frac{d\eta}{|\eta|^{2H-1}} \int d\la \, | \la |^{2b} \big|M^{\Omega_{k,k_1}}_{\xi+k_1^2,\eta+(k+k_1)^2}(\la )\big| \big|R^{\Omega_{k,k_1}}_{\xi+k_1^2,\eta+(k+k_1)^2}(\la )\big|.
\end{align*}
\normalsize

\

By applying Lemma \ref{lem:estim-m-r} below, we get that for all $\Omega\geq 1$, $k,\ell\geq 1$ and $\varepsilon >0$ small enough,
\small
\begin{align*}
&\int \frac{d\xi}{|\xi|^{2H-1}} \int \frac{d\eta}{|\eta|^{2H-1}} \int d\la \, | \la |^{2b} \big|M^{\Omega}_{\xi+k,\eta+\ell}(\la )\big|\big|R^{\Omega}_{\xi+k,\eta+\ell}(\la )\big|\\
&\lesssim \frac{1}{\Omega^{4-\varepsilon}}\bigg[ \int_{|\la |\geq 2} \frac{d\la}{|\la |^{4-2b}} \int \frac{d\xi}{|\xi|^{2H-1}}  \int \frac{d\eta}{|\eta|^{2H-1}}\frac{1}{\langle \xi+k\rangle}\frac{1}{\langle \eta+\ell\rangle}\\
&\hspace{0.5cm}+\int_{|\la |\leq 2} \frac{d\la}{|\la |^{2-2b} |\la-1|^{1-\varepsilon}}\int \frac{d\xi}{|\xi|^{2H-1}} \int \frac{d\eta}{|\eta|^{2H-1}}\\
&\hspace{1cm}\bigg( \frac{1}{\langle \xi+k\rangle\langle \eta+\ell\rangle}+\frac{1}{\langle \xi+(k+\Omega)\rangle\langle \eta+\ell\rangle}+\frac{1}{\langle \xi+(k+\Omega)\rangle\langle \eta+(\ell-\Omega-\xi-k)\rangle}\\
&\hspace{1.5cm}+\frac{1}{\langle \eta+(\ell-\Omega)\rangle\langle \xi+k\rangle}\bigg)+\frac{1}{\langle \eta+(\ell-\Omega)\rangle\langle \xi+(k+\Omega-\eta-\ell)\rangle}\bigg) \bigg]\, .
\end{align*}
\normalsize

\color{blue}
By using the three controls contained in Lemma \ref{lem:elementar} below (as well as the fact that $2H-1<\frac12$), we easily deduce the bound: for all $k\geq 1$, $\Omega\geq 1$, $\ell \geq \Omega+1$ and $\varepsilon >0$ small enough,
\begin{align*}
&\int \frac{d\xi}{|\xi|^{2H-1}} \int \frac{d\eta}{|\eta|^{2H-1}} \int d\la \, | \la |^{2b} \big|M^{\Omega}_{\xi+k,\eta+\ell}(\la )\big|\big|R^{\Omega}_{\xi+k,\eta+\ell}(\la )\big|\lesssim \frac{1}{\Omega^{4-\varepsilon}}\bigg[ \frac{1}{\ell^{2H-1-\varepsilon}}+\frac{1}{(\ell-\Omega)^{2H-1-\varepsilon}} \bigg].
\end{align*}

Therefore, for $\varepsilon >0$ small enough,
\small
\begin{align*}
&\sum_{k\geq 1} \sum_{k_1\geq 1}\langle k\rangle^{2c}|\Omega_{k,k_1}|^{1+2b}  \int \frac{d\xi d\eta}{|\xi|^{2H-1}|\eta|^{2H-1}} \int d\la \, | \la |^{2b} \big|M^{\Omega_{k,k_1}}_{\xi+k_1^2,\eta+(k+k_1)^2}(\la )\big|\big|R^{\Omega_{k,k_1}}_{\xi+k_1^2,\eta+(k+k_1)^2}(\la )\big|\\
 &\lesssim \sum_{k\geq 1} \langle k\rangle^{2c}\sum_{k_1\geq 1}\frac{1}{|\Omega_{k,k_1}|^{3-2b-\varepsilon}}\bigg[ \frac{1}{(k+k_1)^{4H-2-2\varepsilon}}+\frac{1}{(k^2+k_1^2)^{2H-1-\varepsilon}}\bigg]\\
& \lesssim \sum_{k\geq 1} \frac{1}{\langle k\rangle^{1+4H-2b-2c-3\varepsilon}}\sum_{k_1\geq 1}\frac{1}{\langle k_1\rangle^{3-2b-\varepsilon}} < \infty,
\end{align*}
\normalsize
due to $b<1$ and $2b+2c=(2b+c)+c=4H-1+c<4H$.

\color{black}

\

We are thus left with the analysis of
\begin{equation}\label{defi:frak-m-n}
\mathfrak{M}^{(n)}:=\sum_{k\geq 1}{\cop \langle k\rangle^{2c}} \sum_{k_1\geq 1}\1_{\{\langle k_1\rangle\leq 2^n\}} \1_{\{\langle k+k_1\rangle\leq 2^n\}}{\cop |\Omega_{k,k_1}|^{1+2b}}  \int \frac{d\xi}{|\xi|^{2H-1}} \int \frac{d\eta}{|\eta|^{2H-1}} \int d\la \, | \la |^{2b} \big|M^{\Omega_{k,k_1}}_{\xi+k_1^2,\eta+(k+k_1)^2}(\la )\big|^2 .
\end{equation}

To this end, write for all $\Omega \geq 1$,
\begin{align*}
&\int \frac{d\xi}{|\xi|^{2H-1}} \int \frac{d\eta}{|\eta|^{2H-1}} \int d\la \, | \la |^{2b} \big|M^{\Omega}_{\xi+k_1^2,\eta+(k+k_1)^2}(\la )\big|^2\\
&=\frac{1}{|\Omega|^2}\int \frac{d\xi}{|\xi|^{2H-1}} \int \frac{d\eta}{|\eta|^{2H-1}} \int \frac{d\la}{|\la|^{2-2b}} \, \bigg|\int dt\, e^{-\imath \Omega(\la-1)t} \chi(t)^4\,  \overline{\Xi_t(-\xi-k_1^2)}\Xi_t(-\eta-(k+k_1)^2)\bigg|^2\\
&=\frac{1}{|\Omega|^2}  \int dt dt'\, \chi(t)^4 \chi(t')^4 \bigg(\int \frac{d\la}{|\la|^{2-2b}} e^{-\imath \la \Omega (t-t')}\bigg) \,  e^{\imath \Omega(t-t')}  \cj_{k_1}(t,t') \overline{\cj_{k+k_1}(t,t')},
\end{align*}
where we have set
$$\cj_{\ell}(t,t'):=\int \frac{d\xi}{|\xi|^{2H-1}}\overline{\Xi_t(-\xi-\ell^2)}\Xi_{t'}(-\xi-\ell^2).$$
Then, by elementary transformations,
\begin{align}
&\frac{1}{|\Omega|^2}  \int dt dt'\, \chi(t)^4 \chi(t')^4 \bigg(\int \frac{d\la}{|\la|^{2-2b}} e^{-\imath \la \Omega (t-t')}\bigg) \,  e^{\imath \Omega(t-t')}  \cj_{k_1}(t,t') \overline{\cj_{k+k_1}(t,t')}\nonumber\\
&=\frac{1}{|\Omega|^{1+2b}}  \int \frac{dt dt'}{ |t-t'|^{2b-1}}\, \chi(t)^4 \chi(t')^4  \,  e^{\imath \Omega(t-t')}  \cj_{k_1}(t,t') \overline{\cj_{k+k_1}(t,t')}\nonumber\\
&=\frac{1}{|\Omega|^{1+2b}}  \int \frac{dt ds}{ |s|^{2b-1}}\, \chi(t)^4 \chi(t-s)^4  \,  e^{\imath \Omega s}  \cj_{k_1}(t,t-s) \overline{\cj_{k+k_1}(t,t-s)}\nonumber\\
&=\frac{1}{|\Omega|^{3}}  \int \frac{dt ds}{ |s|^{2b-1}}\, \chi(t)^4 \chi\Big(t-\frac{s}{\Omega}\Big)^4  \,  e^{\imath s}  \cj_{k_1}\Big(t,t-\frac{s}{\Omega}\Big) \overline{\cj_{k+k_1}\Big(t,t-\frac{s}{\Omega}\Big)}.\label{im}
\end{align}
Let us now expand the two quantities $\cj_{k_1}\big(t,t-\frac{s}{\Omega}\big)$ and $\overline{\cj_{k+k_1}\big(t,t-\frac{s}{\Omega}\big)}$. On the one hand,
\begin{align*}
\cj_{k_1}\Big(t,t-\frac{s}{\Omega}\Big)&=\int \frac{d\xi}{|\xi|^{2H-1}}\overline{\Xi_t(-\xi-k_1^2)}\Xi_{t-\frac{s}{\Omega}}(-\xi-k_1^2)\\
&=\int \frac{d\xi}{|\xi|^{2H-1}} \bigg( \int_0^t dr \, \chi(r)e^{\imath (\xi+k_1^2)r}\bigg) \bigg( \int_0^{t-\frac{s}{\Omega}}dr' \, \chi(r')e^{-\imath (\xi+k_1^2)r'}\bigg)\\
&=\int \frac{d\xi}{|\xi|^{2H-1}}  \int dr' \, \1_{[0,t-\frac{s}{\Omega}]}(r')\chi(r') \int dr \, \1_{[0,t]}(r)\chi(r)e^{\imath (\xi+k_1^2)(r-r')}\\
&=\int \frac{d\xi}{|\xi|^{2H-1}}  \int dr' \, \1_{[0,t-\frac{s}{\Omega}]}(r')\chi(r') \int dr \, \1_{[0,t]}(r'+r)\chi(r'+r)e^{\imath (\xi+k_1^2)r}\\
&=\frac{1}{k_1^{4H-4}}\int \frac{d\xi'}{|\xi'|^{2H-1}}\int dr \, e^{\imath r k_1^2 (\xi'+1)}\int dr' \, \1_{[0,t-\frac{s}{\Omega}]}(r')\1_{[0,t]}(r'+r) \chi(r') \chi(r'+r)\\
&=\frac{1}{k_1^{4H-2}}\int \frac{d\xi'}{|\xi'|^{2H-1}} \int dr \, e^{\imath r (\xi'+1)}\int dr' \, \1_{[0,t-\frac{s}{\Omega}]}(r')\1_{[0,t]}\Big(r'+\frac{r}{k_1^2}\Big) \chi(r') \chi\Big(r'+\frac{r}{k_1^2}\Big)\\
&=\frac{1}{k_1^{4H-2}} \int \frac{dr}{|r|^{2-2H}} \, e^{\imath r}\int dr' \, \1_{[0,t-\frac{s}{\Omega}]}(r')\1_{[0,t]}\Big(r'+\frac{r}{k_1^2}\Big) \chi(r') \chi\Big(r'+\frac{r}{k_1^2}\Big),
\end{align*}
and similarly
\begin{align*}
&\overline{\cj_{k+k_1}\Big(t,t-\frac{s}{\Omega}\Big)}\\
&=\frac{1}{(k+k_1)^{4H-2}} \int \frac{du}{|u|^{2-2H}} \, e^{-\imath u}\int du' \, \1_{[0,t-\frac{s}{\Omega}]}(u')\1_{[0,t]}\Big(u'+\frac{u}{(k+k_1)^2}\Big) \chi(u') \chi\Big(u'+\frac{u}{(k+k_1)^2}\Big).
\end{align*}

\

By injecting these two expansions into \eqref{im}, we deduce the expression
\begin{align}
&\int \frac{d\xi}{|\xi|^{2H-1}} \int \frac{d\eta}{|\eta|^{2H-1}} \int d\la \, | \la |^{2b} \big|M^{\Omega}_{\xi+k_1^2,\eta+(k+k_1)^2}(\la )\big|^2\nonumber\\
&=\frac{1}{|\Omega|^{3}}\frac{1}{k_1^{4H-2}}\frac{1}{(k+k_1)^{4H-2}}  \int dt\, \chi(t)^4 \int \frac{ds}{ |s|^{2b-1}}\,  e^{\imath s}\chi\Big(t-\frac{s}{\Omega}\Big)^4  \nonumber   \\
&\hspace{2cm}\bigg(\int \frac{dr}{|r|^{2-2H}} \, e^{\imath r}\int dr' \, \1_{[0,t-\frac{s}{\Omega}]}(r')\1_{[0,t]}\Big(r'+\frac{r}{k_1^2}\Big) \chi(r') \chi\Big(r'+\frac{r}{k_1^2}\Big)\bigg)\nonumber\\
&\hspace{2.5cm}\bigg(\int \frac{du}{|u|^{2-2H}} \, e^{-\imath u}\int du' \, \1_{[0,t-\frac{s}{\Omega}]}(u')\1_{[0,t]}\Big(u'+\frac{u}{(k+k_1)^2}\Big) \chi(u') \chi\Big(u'+\frac{u}{(k+k_1)^2}\Big)\bigg)\nonumber\\
&=\frac{1}{|\Omega|^{3}}\frac{1}{k_1^{4H-2}}\frac{1}{(k+k_1)^{4H-2}} Q_{b,H}(\Omega,k,k_1), \label{defi:q-b-h}
\end{align}
where we have set
\begin{align*}
&Q_{b,H}(\Omega,k,k_1):=\int dt\int dr' \int du' \, \chi(t)^4\chi(r')\chi(u')  \\
&\hspace{0.5cm}\bigg( \int \frac{ds}{ |s|^{2b-1}}\,  e^{\imath s}\1_{[0,t-\frac{s}{\Omega}]}(r')\1_{[0,t-\frac{s}{\Omega}]}(u')\chi\Big(t-\frac{s}{\Omega}\Big)^4\bigg)  \bigg(\int \frac{dr}{|r|^{2-2H}} \, e^{\imath r} \1_{[0,t]}\Big(r'+\frac{r}{k_1^2}\Big) \chi\Big(r'+\frac{r}{k_1^2}\Big)\bigg)\\
&\hspace{2.5cm}\bigg(\int \frac{du}{|u|^{2-2H}} \, e^{-\imath u} \1_{[0,t]}\Big(u'+\frac{u}{(k+k_1)^2}\Big)  \chi\Big(u'+\frac{u}{(k+k_1)^2}\Big)\bigg).
\end{align*}

\

Using the subsequent Lemma \ref{lem:semi-conv}, we can easily check that 
$$Q_{b,H}(\Omega_{k,k_1},k,k_1) \to q_{b,H} \quad \text{as} \ k,k_1\to\infty,$$
where
$$q_{b,H}:=\bigg(\int \frac{ds}{ |s|^{2b-1}}\,  e^{\imath s}\bigg)\bigg|\int \frac{dr}{|r|^{2-2H}} \, e^{\imath r}\bigg|^2\bigg(\int dt\, \chi(t)^8\int_0^t dr'\, \chi(r')^2 \int_0^t du' \, \chi(u')^2\bigg) .$$

\

Since $q_{b,H}>0$, there exists $K\geq 1$ large enough such that for all $k_1\geq k\geq K$, one has 
$$Q_{b,H}(\Omega_{k,k_1},k,k_1)\geq \frac{q_{b,H}}{2}.$$
Therefore, going back to \eqref{defi:frak-m-n} and \eqref{defi:q-b-h}, we get that

\begin{align*}
\mathfrak{M}^{(n)}&\gtrsim \sum_{k\geq K}\frac{1}{|k|^{2-2b{\cop -2c}}} \sum_{k_1\geq k}\1_{\{\langle k_1\rangle \leq 2^n\}}\1_{\{\langle k+k_1\rangle \leq 2^n\}} \frac{1}{|k_1|^{4H-2b}}\frac{1}{|k+k_1|^{4H-2}}.
\end{align*}
and since $K$ does not depend on $n$, we deduce that
\begin{align*}
\liminf_{n\to\infty} \mathfrak{M}^{(n)}&\gtrsim \sum_{k\geq K}\frac{1}{|k|^{2-2b{\cop -2c}}} \sum_{k_1\geq k} \frac{1}{|k_1|^{4H-2b}}\frac{1}{|k+k_1|^{4H-2}}\\
&\gtrsim \sum_{k\geq K}\frac{1}{|k|^{8H-4b{\cop -2c}-1}}\bigg(\frac{1}{k} \sum_{k_1\geq k} \frac{1}{|\frac{k_1}{k}|^{4H-2b}}\frac{1}{|1+\frac{k_1}{k}|^{4H-2}}\bigg)\\
&\gtrsim \sum_{k\geq K}\frac{1}{|k|^{8H-4b{\cop -2c}-1}}\int_1^\infty \frac{dx}{|x|^{4H-2b} |1+x|^{4H-2}}\\
&\gtrsim \sum_{k\geq K}\frac{1}{|k|^{8H-4b{\cop -2c}-1}}\int_1^\infty \frac{dx}{|x|^{8H-2b-2} }.
\end{align*}
At this point, recall that {\cop $b+\frac{c}{2}=2H-\frac12$}, and so the latter bound reduces in fact to
$$\sum_{k\geq K}\frac{1}{|k|^{8H-4b{\cop -2c}-1}}\int_1^\infty \frac{dx}{|x|^{8H-2b-2} }=\bigg(\sum_{k\geq K}\frac{1}{|k|}\bigg)\bigg(\int_1^\infty \frac{dx}{|x|^{8H-2b-2}}\bigg)=\infty,$$
which leads us to the desired conclusion
$$\lim_{n\to\infty} \mathfrak{M}^{(n)}=\infty.$$

\

\subsection{Auxiliary lemmas}

\

\smallskip

The few technical results below have been used in the proof of Proposition \ref{prop:ice-cream-frac-intro}. {\cop The first, resp. second, one is borrowed from \cite[Lemma 4.2]{GTV}, resp. \cite[Lemma 3.4]{DFT}.}

\smallskip

\cop

\begin{lemma}[\cite{GTV}]\label{lem:gtv}
Let $\al\geq \beta>0$ such that $\al+\beta>1$. Then for all $A,B\in \R$, one has
$$\max\bigg(\int_{\R} \frac{dx}{\langle x-A\rangle^\al \langle x-B\rangle^{\beta}},\sum_k \frac{1}{\langle k-A\rangle^\al \langle k-B\rangle^\beta}\bigg) \lesssim \frac{1}{\langle A-B \rangle^\ga}$$
where $\al$ is given by
$$\ga:=
\begin{cases}
\al+\beta-1 & \text{if} \ \al<1\\
\beta-\varepsilon & \text{if} \ \al=1\\
\beta & \text{if} \ \al>1,
\end{cases} $$
for every $\varepsilon >0$ small enough.

\end{lemma}

\

\begin{lemma}[\cite{DFT}]\label{lem:int-singu-zero}
Consider  parameters $\mu, \nu\in [0,1)$. Then for all $(A,B)\in \R^2$ such that $|A|\leq |B|$, and for every $0<\varepsilon<1$, it holds that
\begin{equation}\label{adapt-lem-3}
\int_{\R}\frac{d\xi}{|\xi|^\nu} \frac{1}{\langle\xi-A\rangle}\frac{1}{\langle \xi-B\rangle} \lesssim \frac{1}{\langle A\rangle^\nu}\frac{1}{\langle B-A\rangle^{1-\varepsilon}}.
\end{equation}

\end{lemma}

\

\begin{lemma}\label{lem:elementar}
Let $\al>\varepsilon>0$. Then one has
$$\sup_{A\in \R} \int_{\R} \frac{d\xi}{|\xi|^{\al}} \frac{1}{\langle \xi+A \rangle} < \infty \quad \text{and} \quad \int_{\R} \frac{d\xi}{|\xi|^{\al}} \frac{1}{\langle \xi+B \rangle} \lesssim \frac{1}{|B|^{\al-\varepsilon}} \quad \text{for every} \  B\in \R.$$
As a result, for all $A,B\in \R$ and  $0<\varepsilon<\al,\beta<\frac12$, it holds that
\begin{equation}\label{elem-3}
\int_{\R} \frac{d\xi}{|\xi|^{\al}|\xi+A|^{\beta}} \frac{1}{\langle \xi+B\rangle} \lesssim \frac{1}{|B-A|^{\beta-\varepsilon}} .
\end{equation}
\end{lemma}

\begin{proof}
It suffices to observe that for all $A\in \R$ and $\varepsilon>0$ small enough,
\begin{align*}
\int_{\R} \frac{d\xi}{|\xi|^{\al}} \frac{1}{\langle \xi+A \rangle}&\lesssim \1_{\{|A|\leq 1\}}\bigg[\int_{\{|\xi|\leq 2\}} \frac{d\xi}{|\xi|^{\al}} +\int_{\{|\xi|>2\}} \frac{d\xi}{|\xi|^{\al}} \frac{1}{| |\xi|-1 |}   \bigg]+\1_{\{|A|> 1\}}\int_{\R} \frac{d\xi}{|\xi|^{\al}} \frac{1}{|\xi+A |^{1-\varepsilon}}\\
&\lesssim \1_{\{|A|\leq 1\}}\bigg[\int_{\{|\xi|\leq 2\}} \frac{d\xi}{|\xi|^{\al}} +\int_{\{|\xi|>2\}} \frac{d\xi}{|\xi|^{1+\al}}    \bigg]+\1_{\{|A|> 1\}}\frac{1}{|A|^{\al-\varepsilon}}\int_{\R} \frac{d\xi}{|\xi|^{\al}} \frac{1}{|\xi+1 |^{1-\varepsilon}}\lesssim 1.
\end{align*}
In the same way,
$$\int_{\R} \frac{d\xi}{|\xi|^{\al}} \frac{1}{\langle \xi+B \rangle}  \lesssim \int_{\R} \frac{d\xi}{|\xi|^{\al}} \frac{1}{|\xi+B|^{1-\varepsilon} }\lesssim \frac{1}{|B|^{\al-\varepsilon}}.$$
The estimate \eqref{elem-3} then follows from an application of Cauchy-Schwarz inequality.
\end{proof}

\cob

\

\begin{lemma}\label{lem:estim-m-r}
For all $L\geq 1$ and $\be,\be'\in \R$, let $M^{L}_{\beta,\beta'}$ and $R^{L}_{\beta,\beta'}$ be the functions on $\R$ defined by \eqref{defin-m-l} and \eqref{defin-r-l}, respectively. Then the following estimates hold true:

\smallskip

\noindent
$(i)$ For all $|\la|\geq 2$, one has
$$\big|M^{L}_{\beta,\beta'}(\la ) \big|\lesssim {\cop \max\bigg(\frac{1}{L|\la |} \frac{1}{\langle \beta\rangle\langle \beta'\rangle},\frac{1}{L^2|\la |^2}\bigg)}  \quad \text{and} \quad \big|R^{L}_{\beta,\beta'}(\la ) \big|\lesssim \frac{1}{L^2|\la |^2 } \frac{1}{\langle \beta\rangle\langle \beta'\rangle}.$$

\
 
\noindent
$(ii)$ For all $|\la|\leq 2$, one has
$$\big|M^{L}_{\beta,\beta'}(\la ) \big|\lesssim {\cop \max\bigg(\frac{1}{L|\la |} \frac{1}{\langle \beta\rangle\langle \beta'\rangle},\frac{1}{L^{2-\varepsilon}}\frac{1}{|\la | |\la-1|^{1-\varepsilon}}\bigg)} 
$$
and
\begin{align}
&\big|R^{L}_{\beta,\beta'}(\la ) \big|\nonumber\\
&\lesssim \frac{1}{L^2|\la | }\bigg[ \frac{1}{\langle \beta\rangle\langle \beta'\rangle}+\frac{1}{\langle L+\beta\rangle\langle \beta'\rangle}+\frac{1}{\langle L+\beta\rangle\langle L+\beta- \beta'\rangle}+\frac{1}{\langle L-\beta'\rangle\langle \beta\rangle}+\frac{1}{\langle L-\beta'\rangle\langle L+\beta-\beta'\rangle}\bigg].\label{bou-rl}
\end{align}

\end{lemma}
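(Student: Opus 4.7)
The plan will rest on two elementary preparatory observations. First, a single integration by parts in $r$, using the smoothness and compact support of $\chi$, yields the uniform bound $|\Xi_t(-\beta)|\lesssim 1/\langle\beta\rangle$ for all $t\in \R$ (trivial when $|\beta|\leq 1$; via IBP when $|\beta|>1$, where the boundary values $\chi(0)=1$ and $\chi(t)$ produce $O(1/|\beta|)$ terms). This will supply all denominators of the form $\langle\beta\rangle$ or $\langle\beta'\rangle$ in the estimates. Second, any integral of the form $\int_0^t e^{\imath \mu s}\chi(s)^N\, ds$ with bounded $t$ and $N\geq 1$ satisfies the standard oscillatory bound $\lesssim 1/\langle \mu\rangle$, proved by the trivial estimate when $|\mu|\leq 1$ and by one integration by parts when $|\mu|>1$.

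For part $(i)$, with $|\la|\geq 2$ we have $|\la-1|\geq 1$. For $M^L_{\beta,\beta'}$, I would integrate by parts in $t$ against the phase $-Lt(\la-1)$; the boundary terms vanish (since $\chi$ is compactly supported) and the bulk integrand $\partial_t[\chi(t)^4\overline{\Xi_t(-\beta)}\Xi_t(-\beta')]$ is uniformly bounded, so one gains a factor $1/(L|\la-1|)\lesssim 1/(L|\la|)$ and obtains $|M^L_{\beta,\beta'}(\la)|\lesssim 1/(L^2|\la|^2)$. For $R^L_{\beta,\beta'}$, the same integration by parts in the outer $t$-variable against the phase $-\la L t$ provides the corresponding $1/(L|\la|)$ gain, and the remaining integrand is estimated trivially using $|\Xi_s(-\beta)||\Xi_s(-\beta')|\lesssim 1/(\langle\beta\rangle\langle\beta'\rangle)$. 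For the $M^L$ bound in part $(ii)$, the phase $L(\la-1)$ may now degenerate, so I would combine the trivial estimate (giving $\lesssim 1/(L|\la|)$) with the integration-by-parts estimate (giving $\lesssim 1/(L^2|\la||\la-1|)$) via the elementary inequality $\min(1,x)\leq x^{1-\varepsilon}$ applied with $x=1/(L|\la-1|)$, which yields the claimed $1/(L^{2-\varepsilon}|\la||\la-1|^{1-\varepsilon})$.

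The decisive case is the $R^L$ bound in part $(ii)$: the outer phase $\la L$ degenerates, so the extra $1/L$ factor must come from the \emph{inner} integral via the oscillation $e^{\imath s L}$. I would integrate by parts in $s$ inside $\int_0^t e^{\imath sL}\chi(s)^3\overline{\Xi_s(-\beta)}\Xi_s(-\beta')\, ds$; the boundary term at $s=0$ vanishes since $\Xi_0=0$, while the boundary term at $s=t$ contributes the first summand $1/(\langle\beta\rangle\langle\beta'\rangle)$ in \eqref{bou-rl}. The bulk contribution involves $\partial_s[\chi(s)^3\overline{\Xi_s(-\beta)}\Xi_s(-\beta')]$, which splits by Leibniz into three pieces: one containing $\chi'(s)$, controlled trivially and absorbed in the $1/(\langle\beta\rangle\langle\beta'\rangle)$ denominator; one of the form $\chi(s)^4 e^{\imath \beta s}\Xi_s(-\beta')$, whose new oscillation $L+\beta$ permits a second integration by parts producing the factor $1/\langle L+\beta\rangle$, after which the residual subintegrals are handled either by the sharp bound on $|\Xi_s(-\beta')|$ (producing the denominator $\langle L+\beta\rangle\langle\beta'\rangle$) or by the oscillatory estimate on $\int \chi(s)^5 e^{\imath s(L+\beta-\beta')}ds$ (producing $\langle L+\beta\rangle\langle L+\beta-\beta'\rangle$); and a symmetric piece $\chi(s)^4 e^{-\imath \beta' s}\overline{\Xi_s(-\beta)}$ with oscillation $L-\beta'$, which yields the two denominators involving $\langle L-\beta'\rangle$. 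Summing these five contributions and multiplying by the $1/(|\la|L)$ prefactor, together with a trivial bound on the outer $t$-integral against $\chi'(t)$, gives \eqref{bou-rl}. The main obstacle will be the bookkeeping in this last step: tracking carefully the boundary contributions at $s=t$ coming from each of the nested integrations by parts, and verifying that every such term can be absorbed into one of the five denominators listed.
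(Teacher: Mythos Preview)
Your proposal is correct and follows essentially the same route as the paper: one integration by parts against the outer phase for part $(i)$ and the $M^L$-bound in part $(ii)$, and for the $R^L$-bound in part $(ii)$ an integration by parts in the inner $s$-variable against $e^{\imath sL}$, followed by the Leibniz splitting and a second IBP on the resulting integrals $\int_0^t e^{\imath s(L+\beta)}\chi(s)^4\Xi_s(-\beta')\,ds$ and its mirror. The bookkeeping you flag---the boundary terms at $s=t$ from the nested IBP---is exactly what produces the five denominators in \eqref{bou-rl}, and your identification of which residual contributes which denominator matches the paper's computation.
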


\begin{proof}
\color{blue}
Note first thgat the bound
$$\big|M^{L}_{\beta,\beta'}(\la ) \big|\lesssim \frac{1}{L|\la |} \frac{1}{\langle \beta\rangle\langle \beta'\rangle}$$
in both $(i)$ and $(ii)$ is a straightforward consequence of the fact that
\begin{equation}\label{unif-bou-xi-2}
|\Xi_t(-\beta)| \lesssim \frac{1}{\langle \beta\rangle}  .
\end{equation}
\color{black}

$(i)$ If $|\la|\geq 2$, then by an elementary integration-by-parts argument, we get that
\begin{align*}
&\big|M^{L}_{\beta,\beta'}(\la ) \big|\lesssim \frac{1}{L^2|\la|^2 }\int dt\,  \Big[\big|\partial_t(\chi(.)^4)(t)\big| \big|\Xi_t(-\beta)\big| \big|\Xi_t(-\beta')\big|\\
&\hspace{3cm}+\big|\chi(t)\big|^4 \big|\partial_t(\Xi_.(-\beta))(t)\big| \big|\Xi_t(-\beta')\big|+\big|\chi(t)\big|^4 \big|\Xi_t(-\beta)\big| \big|\partial_t (\Xi_.(-\beta'))(t)\big|\Big]\lesssim \frac{1}{L^2|\la|^2 },
\end{align*}
where we have used the uniform bound
\begin{equation}\label{unif-bou-xi}
|\Xi_t(-\beta)| + |\partial_t \big(\Xi_t(-\beta))| \lesssim 1.
\end{equation}
In the same way,
\begin{align*}
&\big|R^{L}_{\beta,\beta'}(\la ) \big| \lesssim \frac{1}{L^2|\la|^2 }\bigg[\int dt\, |\chi''(t)|\int_0^t ds \, |\chi(s)|^3 |\Xi_s(-\beta)| |\Xi_s(-\beta')|\\
&\hspace{5cm} +\int dt\,  |\chi'(t)| |\chi(t)|^3 |\Xi_t(-\beta)| |\Xi_t(-\beta')| \bigg]\lesssim \frac{1}{L^2|\la|^2 }\frac{1}{\langle \beta\rangle\langle \beta'\rangle},
\end{align*}
where we have used \eqref{unif-bou-xi-2} to derive the last inequality.

\

\noindent
$(ii)$ For $M^{L}_{\beta,\beta'}(\la )$, we can use again \eqref{unif-bou-xi} and an integration-by-parts argument to write, for $|\la|\leq 2$,
\begin{align*}
\big| M^{L}_{\beta,\beta'}(\la )\big|&\lesssim \frac{1}{L|\la| }\bigg|\int dt\, e^{-\imath Lt(\la-1)} \chi(t)^4\,  \overline{\Xi_t(-\beta)}\Xi_t(-\beta')\bigg|^{1-\varepsilon}\lesssim \frac{1}{L^{2-\varepsilon}|\la| |\la-1|^{1-\varepsilon}}. 
\end{align*}

\smallskip

As for $R^{L}_{\beta,\beta'}(\la )$, observe first that
\small
\begin{align*}
&\int_0^t ds \, e^{\imath s L} \chi(s)^3 \overline{\Xi_s(-\beta)}\Xi_s(-\beta')=\frac{1}{\imath L} e^{\imath t L} \chi(t)^3 \overline{\Xi_t(-\beta)}\Xi_t(-\beta')\\
&-\frac{1}{\imath L} \int_0^t ds \, e^{\imath s L} \partial_s(\chi^3)(s) \overline{\Xi_s(-\beta)}\Xi_s(-\beta')-\frac{1}{\imath L} \int_0^t ds \, e^{\imath s(L+\beta)}\chi(s)^4 \Xi_s(-\beta')-\frac{1}{\imath L} \int_0^t ds \, e^{\imath s(L-\beta')}\chi(s)^4 \overline{\Xi_s(-\beta)},
\end{align*}
\normalsize
and so, by \eqref{unif-bou-xi-2}, we get that
\begin{align*}
&|R^{L}_{\beta,\beta'}(\la )|=\frac{1}{L |\la|}\bigg|\int dt\, e^{-\imath \la Lt} \chi'(t)\int_0^t ds \, e^{\imath s L} \chi(s)^3 \overline{\Xi_s(-\beta)}\Xi_s(-\beta')\bigg|\\
&\lesssim \frac{1}{L^2|\la|}\bigg[\frac{1}{\langle \beta\rangle\langle \beta'\rangle}+\int dt\, |\chi'(t)|\bigg|\int_0^t ds \, e^{\imath s(L+\beta)}\chi(s)^4 \Xi_s(-\beta')\bigg|+\int dt\, |\chi'(t)| \bigg|\int_0^t ds \, e^{\imath s(L-\beta')}\chi(s)^4 \overline{\Xi_s(-\beta)}\bigg|\bigg].
\end{align*}
Then, using {\cop similar integration-by-parts arguments as before}, we obtain that
\begin{align*}
\bigg|\int_0^t ds \, e^{\imath s(L+\beta)}\chi(s)^4 \Xi_s(-\beta')\bigg|\lesssim \frac{1}{\langle L+\beta\rangle}\bigg[ \frac{1}{\langle \beta'\rangle}+ \frac{1}{\langle L+\beta-\beta'\rangle}\bigg],
\end{align*}
as well as
\begin{align*}
\bigg|\int_0^t ds \, e^{\imath s(L-\beta')}\chi(s)^4 \overline{\Xi_s(-\beta)}\bigg|\lesssim \frac{1}{\langle L-\beta'\rangle} \bigg[\frac{1}{\langle \beta\rangle}+\frac{1}{\langle L+\beta-\beta'\rangle}\bigg],
\end{align*}
which yields the desired bound \eqref{bou-rl}.

\end{proof}

\

\begin{lemma}\label{lem:semi-conv}
Fix $\al\in (0,1)$.

\

\noindent
$(i)$ It holds that
\begin{align}
&\sup_{\substack{L\geq 1\\ t,r',u'\in [-2,2]}}\bigg|\int_{\R} \frac{ds}{|s|^{\al}}e^{\imath s} \1_{[0,t-\frac{s}{L}]}(r')\1_{[0,t-\frac{s}{L}]}(u')\chi\Big(t-\frac{s}{L}\Big)   \bigg|\lesssim 1\label{item-i-unif}
\end{align}
and
\begin{align}
&\sup_{\substack{L\geq 1\\ r'\in [-2,2]}}\bigg|\int_{\R} \frac{dr}{|r|^{\al}} \, e^{\imath r} \1_{[0,t]}\Big(r'+\frac{r}{L}\Big) \chi\Big(r'+\frac{r}{L}\Big)   \bigg|\lesssim 1.\label{item-i-bis-unif}
\end{align}

\

\noindent
$(ii)$ For all $L\geq 1$ and $t,r',u'\in [-2,2]$, one has
\small
\begin{align}
&\bigg|\int_{\R} \frac{ds}{|s|^{\al}}e^{\imath s} \bigg\{\1_{[0,t-\frac{s}{L}]}(r')\1_{[0,t-\frac{s}{L}]}(u')\chi\Big(t-\frac{s}{L}\Big)-\1_{[0,t]}(r')\1_{[0,t]}(u')\chi\big(t\big)\bigg\}   \bigg|\lesssim \frac{1}{L^{\al}}   \bigg\{\frac{1}{|t-r'|^{\al}}+\frac{1}{|t-u'|^{\al}}\bigg\} \label{ite-ii-i}
\end{align}
\normalsize
and
\begin{align}
&\bigg|\int_{\R} \frac{dr}{|r|^{\al}}e^{\imath r} \bigg\{\1_{[0,t]}\Big(r'+\frac{r}{L}\Big)\chi\Big(r'+\frac{r}{L}\Big)-\1_{[0,t]}(r')\chi\big(r'\big)\bigg\}   \bigg|\lesssim \frac{1}{L^{\al}}  \bigg\{\frac{1}{|t|^{\al}} +\frac{1}{|t-r'|^{\al}}+\frac{1}{|r'|^{\al}} \bigg\}.\label{ite-ii-ii}
\end{align}

\end{lemma}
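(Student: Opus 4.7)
The plan is to reduce both items to the elementary oscillatory-integral estimates
$$\Big|\int_I |s|^{-\al} e^{\imath s}\, ds\Big|\lesssim 1\quad\text{and}\quad\Big|\int_a^{b} |s|^{-\al} e^{\imath s}\, ds\Big|\lesssim |a|^{-\al}\ \text{for}\ |a|\geq 1,$$
valid for $0<\al<1$, any interval $I\subset \R$, and any $b$ with $|b|\geq |a|$. Both follow from one integration by parts in $e^{\imath s}$, using that $\partial_s|s|^{-\al}$ is in $L^1(\{|s|\geq 1\})$ while $|s|^{-\al}$ is integrable near $0$.

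For item $(i)$, set $g(s):=\1_{[0,t-s/L]}(r')\1_{[0,t-s/L]}(u')\chi(t-s/L)$. Uniformly in $(L,t,r',u')$, the function $g$ is bounded by $1$, supported in $\{|s|\leq CL\}$, and piecewise $C^1$ with a uniformly bounded number of jumps (located where one of the arguments of the indicators reaches $0$, $r'$, or $u'$, or where the support of $\chi$ is exited). Splitting the integral at $|s|=1$ and integrating by parts in $e^{\imath s}$ on $\{|s|\geq 1\}$ yields \eqref{item-i-unif}: the boundary contributions at $|s|=1$ and at the jumps of $g$ are $O(1)$, the $\partial_s|s|^{-\al}$ term is integrable on $\{|s|\geq 1\}$, and the smooth part of $g'(s)$ is of order $L^{-1}$ on a set of measure $O(L)$. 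The exact same argument applied to the simpler integrand $\1_{[0,t]}(r'+r/L)\chi(r'+r/L)$ yields \eqref{item-i-bis-unif}.

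For item $(ii)$, I decompose the bracketed difference into a smooth piece $\1_{[0,t-s/L]}(r')\1_{[0,t-s/L]}(u')\bigl[\chi(t-s/L)-\chi(t)\bigr]$ and a jump piece $\chi(t)\bigl[\1_{[0,t-s/L]}(r')\1_{[0,t-s/L]}(u')-\1_{[0,t]}(r')\1_{[0,t]}(u')\bigr]$. For the smooth piece, Taylor's formula $\chi(t-s/L)-\chi(t)=-(s/L)\int_0^1\chi'(t-\tau s/L)\,d\tau$ and the substitution $s=Lu$ reduce the contribution to
$$L^{1-\al}\int \mathrm{sgn}(u)\,|u|^{1-\al}\,e^{\imath Lu}\,\Phi(u)\,du,$$
with $\Phi$ bounded, compactly supported, and piecewise $C^1$; one integration by parts in $e^{\imath Lu}$ gains a factor $L^{-1}$ (the $|u|^{-\al}$ appearing in the derivative is locally integrable precisely because $\al<1$), yielding an $O(L^{-\al})$ contribution. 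For the jump piece, each indicator difference $\1_{[0,t-s/L]}(r')-\1_{[0,t]}(r')$ is supported on an interval with endpoint $s=L(t-r')$; applying the second oscillatory estimate with $a=L(t-r')$ gives a contribution $\lesssim L^{-\al}|t-r'|^{-\al}$, and likewise $\lesssim L^{-\al}|t-u'|^{-\al}$ for the $u'$-jump. Summing proves \eqref{ite-ii-i}. Inequality \eqref{ite-ii-ii} is handled identically: the two boundaries of $\1_{[0,t]}(r'+r/L)$ at $x=0$ and $x=t$ produce jumps at $r=-Lr'$ and $r=L(t-r')$ respectively, yielding the factors $|r'|^{-\al}$ and $|t-r'|^{-\al}$, while the configuration where both jumps straddle the origin (so the indicator is supported on an interval of length $L|t|$ containing $0$) contributes the remaining $|t|^{-\al}$ term.

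The main technical obstacle will be the bookkeeping of case distinctions according to the signs of $t$, $t-r'$, $t-u'$, and $r'$, which govern where the generalized indicators $\1_{[0,\cdot]}$ are nonzero and with which sign; in each configuration one must verify that the endpoint of the relevant interval closest to $0$ coincides with the quantity whose $\al$-power appears in the announced bound. The only step truly exploiting oscillation rather than absolute bounds is the smooth $\chi$-contribution in item $(ii)$, where the gain $L^{-\al}$ hinges precisely on the condition $\al<1$.
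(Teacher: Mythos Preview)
Your approach is correct and essentially the same as the paper's: both reduce everything to the two basic oscillatory estimates (the paper packages these as a separate lemma, allowing an extra smooth factor $\varphi(r/L)$, and then proceeds by the same case analysis on the signed indicators). The only structural difference is the order of the telescoping in item~$(ii)$: you peel off the $\chi$-increment first (keeping $s$-dependent indicators in the smooth piece), while the paper peels off the indicator-increments first (keeping the $s$-dependent $\chi$). Both orderings work and lead to the same bounds.

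One point to flag: your explanation of the $|t|^{-\al}$ term in \eqref{ite-ii-ii} is not right. The jumps of $\1_{[0,t]}(r'+r/L)-\1_{[0,t]}(r')$ sit at $r=-Lr'$ and $r=L(t-r')$; if both straddle the origin (say $0<r'<t$) then the \emph{difference} is supported on the two half-lines $\{r<-Lr'\}\cup\{r>L(t-r')\}$, not on an interval containing $0$, and your oscillatory bound gives $L^{-\al}|r'|^{-\al}$ and $L^{-\al}|t-r'|^{-\al}$ respectively. In fact a direct case analysis along your lines yields the sharper bound $L^{-\al}(|r'|^{-\al}+|t-r'|^{-\al})$ for \eqref{ite-ii-ii}; the extra $|t|^{-\al}$ in the stated lemma is an artifact of the paper's particular splitting (it first separates the regions $r\gtrless Lt$, which introduces $L|t|$ as an endpoint) and is not actually needed. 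So your argument, once the bookkeeping is written out, proves slightly more than claimed.
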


\

\begin{proof}

\

\smallskip

\noindent
$(i)$ Let $L\geq 1$ and $t,r',u'\in [-2,2]$. One has, for almost every $s\in \R$,
\begin{align*}
\1_{[0,t-\frac{s}{L}]}(r')&=\1_{\{ r'\geq 0\}}\1_{\{s\leq L(t-r')\}}-\1_{\{ r'< 0\}}\1_{\{s\geq L(t-r')\}},
\end{align*}
and so we can write
\begin{align*}
&\1_{[0,t-\frac{s}{L}]}(r')\1_{[0,t-\frac{s}{L}]}(u')\\
&=\Big(\1_{\{ r'\geq 0\}}\1_{\{s\leq L(t-r')\}}-\1_{\{ r'<0\}}\1_{\{s\geq L(t-r')\}}\Big) \Big(\1_{\{ u'\geq 0\}}\1_{\{s\leq L(t-u')\}}-\1_{\{ u'< 0\}}\1_{\{s\geq L(t-u')\}}\Big)\\
&=\1_{\{ r',u'\geq 0\}}\1_{\{s\leq L[(t-r')\wedge (t-u')]\}}+\1_{\{ r',u'< 0\}}\1_{\{s\geq L[(t-r')\vee(t-u')]\}}.
\end{align*}
Therefore
\begin{align*}
&\int_{\R} \frac{ds}{|s|^{\al}}e^{\imath s}\1_{[0,t-\frac{s}{L}]}(r')\1_{[0,t-\frac{s}{L}]}(u') \chi\Big(t-\frac{s}{L}\Big)\\
&=\1_{\{ r',u'\geq 0\}}\int_{-\infty}^{L[(t-r')\wedge (t-u')]} \frac{ds}{|s|^{\al}}e^{\imath s} \chi\Big(t-\frac{s}{L}\Big)+\1_{\{ r',u'< 0\}}\int_{L[(t-r')\vee(t-u')]}^\infty \frac{ds}{|s|^{\al}}e^{\imath s} \chi\Big(t-\frac{s}{L}\Big),
\end{align*}
and we can apply the estimate \eqref{a-1-a-2} in Lemma \ref{lem:der} below to deduce \eqref{item-i-unif}.

\

We can then use the same arguments to prove \eqref{item-i-bis-unif}, by noting that
$$\1_{[0,t]}\Big(r'+\frac{r}{L}\Big)=\1_{\{t\geq 0\}} \1_{\{-r'L \leq r\leq -r'L+Lt\}}-\1_{\{t< 0\}} \1_{\{-r'L+Lt \leq r\leq -r'L\}}.$$

\

\noindent
$(ii)$ Regarding \eqref{ite-ii-i}, we naturally start with the bound
\begin{align}
&\bigg|\int_{\R} \frac{ds}{|s|^{\al}}e^{\imath s} \bigg\{\1_{[0,t-\frac{s}{L}]}(r')\1_{[0,t-\frac{s}{L}]}(u')\chi\Big(t-\frac{s}{L}\Big)-\1_{[0,t]}(r')\1_{[0,t]}(u')\chi\big(t\big)\bigg\}   \bigg|\nonumber\\
&\leq \bigg|\int_{\R} \frac{ds}{|s|^{\al}}e^{\imath s} \big\{\1_{[0,t-\frac{s}{L}]}(r')-\1_{[0,t]}(r')\big\}\1_{[0,t-\frac{s}{L}]}(u')\chi\Big(t-\frac{s}{L}\Big)  \bigg|\nonumber\\
&\hspace{1cm}+\1_{[0,t]}(r')\bigg|\int_{\R} \frac{ds}{|s|^{\al}}e^{\imath s} \big\{\1_{[0,t-\frac{s}{L}]}(u')-\1_{[0,t]}(u')\big\}\chi\Big(t-\frac{s}{L}\Big)  \bigg|\nonumber\\
&\hspace{2.5cm}+\1_{[0,t]}(r')\1_{[0,t]}(u')\bigg|\int_{\R} \frac{ds}{|s|^{\al}}e^{\imath s} \bigg\{\chi\Big(t-\frac{s}{L}\Big)-\chi\big(t\big)\bigg\}   \bigg|\, .\label{dec-three}
\end{align}

\

For the first integral in \eqref{dec-three}, observe for instance that for almost every $s\geq 0$,
\small
\begin{align}
\1_{[0,t-\frac{s}{L}]}(r')-\1_{[0,t]}(r') &=\1_{\{r'\geq 0\}}\big\{ \1_{\{t\geq r'+\frac{s}{L}\}}-\1_{\{t\geq r'\}}\big\}-\1_{\{r'< 0\}}\big\{ \1_{\{t\leq r'+\frac{s}{L}\}}-\1_{\{t\leq r'\}}\big\}\nonumber\\
&=-\big\{\1_{\{r'\geq 0\}}+\1_{\{r'< 0\}}\big\}\1_{\{r'\leq t\leq r'+\frac{s}{L}\}}=-\1_{\{r'\leq t\}}\1_{\{s\geq L(t- r')\}},\label{diff-indi}
\end{align}
\normalsize
which, combined with the previous decomposition
$$\1_{[0,t-\frac{s}{L}]}(u')=\1_{\{ u'\geq 0\}}\1_{\{s\leq L(t-u')\}}-\1_{\{ u'< 0\}}\1_{\{s\geq L(t-u')\}},$$
yields
\begin{align*}
&\big\{\1_{[0,t-\frac{s}{L}]}(r')-\1_{[0,t]}(r')\big\}\1_{[0,t-\frac{s}{L}]}(u')\\
&=- \big\{\1_{\{0\leq u'\leq r'\leq t\}}\1_{\{L(t-r')\leq s\leq L(t-u')\}}-\1_{\{r'\leq t\}}\1_{\{ u'< 0\}}\1_{\{s\geq L(t-u')\}}\1_{\{s\geq L(t- r')\}}\big\}.
\end{align*}
As a result,
\begin{align*}
&\int_0^\infty \frac{ds}{s^{\al}}e^{\imath s} \big\{\1_{[0,t-\frac{s}{L}]}(r')-\1_{[0,t]}(r')\big\}\1_{[0,t-\frac{s}{L}]}(u')\chi\Big(t-\frac{s}{L}\Big) \\
&=-\1_{\{0\leq u'\leq r'\leq t\}}\int_{L|t-r'|}^{L|t-u'|} \frac{ds}{s^{\al}}e^{\imath s} \chi\Big(t-\frac{s}{L}\Big)+\1_{\{r'\leq t\}}\1_{\{ u'< 0\}} \int_{\max(L|t-r'|,L(t-u'))}^\infty \frac{ds}{s^{\al}}e^{\imath s} \chi\Big(t-\frac{s}{L}\Big),
\end{align*}
and we can use the estimate \eqref{b} in Lemma \ref{lem:der} to assert that
$$\bigg|\int_0^\infty \frac{ds}{s^{\al}}e^{\imath s} \big\{\1_{[0,t-\frac{s}{L}]}(r')-\1_{[0,t]}(r')\big\}\1_{[0,t-\frac{s}{L}]}(u')\chi\Big(t-\frac{s}{L}\Big)\bigg| \lesssim\frac{1}{L^{\al}}  \max \bigg(\frac{1}{|t-r'|^{\al}},\frac{1}{|t-u'|^{\al}}\bigg)  ,$$
which corresponds to the desired bound.

\

The bound for the second integral in \eqref{dec-three} immediately follows from the same arguments, that is from the combination of \eqref{diff-indi} with \eqref{b}: we get here that
$$\bigg|\int_{\R} \frac{ds}{|s|^{\al}}e^{\imath s} \big\{\1_{[0,t-\frac{s}{L}]}(u')-\1_{[0,t]}(u')\big\}\chi\Big(t-\frac{s}{L}\Big)  \bigg|\lesssim \frac{1}{L^{\al}|t-u'|^{\al}} .$$

\

As for the third integral in \eqref{dec-three}, we can write
\begin{align*}
\bigg|\int_0^\infty \frac{ds}{s^\al} e^{\imath s} \Big\{\chi\big(t-\frac{s}{L}\big)-\chi(t)\Big\}\bigg|&=\bigg|\frac{\al}{\imath}\int_0^\infty \frac{ds}{s^{\al+1}} e^{\imath s}\Big\{\chi\Big(t-\frac{s}{L}\Big)-\chi(t)\Big\}+\frac{1}{\imath L} \int_0^\infty \frac{ds}{s^\al} e^{\imath s} \chi'\Big(t-\frac{s}{L}\Big)\bigg|\\
&\lesssim \frac{1}{L^\al}\int_0^\infty \frac{ds}{s^{\al+1}} \big|\chi(t-s)-\chi(t)\big|+\frac{1}{L} \int_0^{4L} \frac{ds}{s^\al}\lesssim \frac{1}{L^\al},
\end{align*}
which completes the proof of \eqref{ite-ii-i}.

\

The proof of \eqref{ite-ii-ii} stems from a similar strategy: observe for instance the decomposition, valid for almost all $t,r> 0$,
\small
\begin{align*}
&\1_{[0,t]}\Big(r'+\frac{r}{L}\Big)-\1_{[0,t]}(r')=\1_{\{-\frac{r}{L}\leq r'\leq t-\frac{r}{L}\}}-\1_{\{0\leq r'\leq t\}}  \\
&=\1_{\{0\leq t\leq \frac{r}{L}\}}\big\{ \1_{\{-\frac{r}{L}\leq r'\leq t-\frac{r}{L}\}}-\1_{\{0\leq r'\leq t\}}  \big\}+\1_{\{ t>\frac{r}{L}\}}\big\{ \1_{\{-\frac{r}{L}\leq r'\leq 0\}}-\1_{\{t-\frac{r}{L}\leq r'\leq t\}}  \big\}  \\
&= \1_{\{r\geq L|t|\}} \1_{\{r\geq -Lr'\}}\1_{\{ r\leq L(t-r')\}}-\1_{\{r\geq L|t|\}} \1_{\{0\leq r'\leq t\}} +\1_{\{r\geq L |r'|\}}\1_{\{ r<Lt\}}\1_{\{r'\leq 0\}}-\1_{\{r\geq L|t-r'|\}}\1_{\{ r<Lt\}}  \1_{\{r'\leq t\}},
\end{align*}
\normalsize
which paves the way toward the application of \eqref{b}, just as above.
\end{proof}

\

\begin{lemma}\label{lem:der}
Fix $\al\in (0,1)$. For all test-function $\vp:\R\to\R$ with support in $[-K,K]$ ($K\geq 1$) and all $\la\in \{-1,1\}$, it holds that
\begin{equation}\label{a-1-a-2}
\sup_{\substack{A_1,A_2\in [-\infty,\infty]\\L\geq 1}} \bigg|\int_{A_1}^{A_2}\frac{dr}{|r|^\al}e^{ \imath \la r} \vp\Big(\frac{r}{L}\Big)\bigg| \lesssim \|\vp\|_{\infty}+\|\vp'\|_\infty K^{1-\al},
\end{equation}
and for all $0<B_1\leq B_2\leq \infty$, $L\geq 1$,
\begin{equation}\label{b}
\bigg|\int_{B_1}^{B_2} \frac{dr}{r^\al} e^{\imath \la r}\vp\Big(\frac{r}{L}\Big)\bigg|\lesssim \frac{1}{B_1^\al} \big\|\vp\big\|_\infty+\frac{K^{1-\al}}{L^\al} \big\|\vp'\big\|_\infty.
\end{equation}

\end{lemma}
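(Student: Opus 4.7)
The plan is to prove both bounds via integration by parts on the oscillatory factor $e^{\imath \la r}$, combined with a trivial estimate on the portion where the integrable singularity $|r|^{-\al}$ (using $\al<1$) dominates.

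For \eqref{a-1-a-2}, I would split $[A_1,A_2]$ as $([A_1,A_2]\cap[-1,1]) \cup ([A_1,A_2]\cap\{|r|>1\})$. On the small piece, the trivial estimate gives $\|\vp\|_\infty \int_{-1}^{1}|r|^{-\al}\,dr \lesssim \|\vp\|_\infty$. On the complement, one integration by parts using $e^{\imath\la r}=\frac{1}{\imath\la}\frac{d}{dr}e^{\imath\la r}$ produces boundary terms of size $\lesssim \|\vp\|_\infty$, together with two interior terms. The first, coming from differentiating $|r|^{-\al}$, is absolutely convergent and bounded by $\al \|\vp\|_\infty \int_1^\infty r^{-\al-1}\,dr \lesssim \|\vp\|_\infty$. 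The second, coming from differentiating $\vp(r/L)$ and exploiting the support restriction $\mathrm{supp}(\vp(\cdot/L))\subset [-LK,LK]$, yields $\frac{\|\vp'\|_\infty}{L} \int_1^{LK} r^{-\al}\,dr \lesssim \|\vp'\|_\infty K^{1-\al} L^{-\al} \le \|\vp'\|_\infty K^{1-\al}$ since $L\ge 1$.

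For \eqref{b}, the same strategy applies with $B_1$ serving as the cut-off between trivial and oscillatory treatment. If $B_1 \ge 1$, I would integrate by parts directly on $[B_1,B_2]$: the boundary term at $B_1$ contributes $\|\vp\|_\infty B_1^{-\al}$ (and the one at $B_2$ is dominated by it when $B_2<\infty$, or vanishes by compact support of $\vp(r/L)$ when $B_2=\infty$), the derivative of $r^{-\al}$ yields an absolutely integrable remainder bounded by $\al \|\vp\|_\infty \int_{B_1}^\infty r^{-\al-1}\,dr \lesssim \|\vp\|_\infty B_1^{-\al}$, and the derivative of $\vp(r/L)$ gives $\|\vp'\|_\infty K^{1-\al} L^{-\al}$ exactly as in item $(a)$. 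If $B_1 < 1$, I split the integration range at $r=1$: on $[B_1,\min(1,B_2)]$ the trivial bound produces a constant $\lesssim \|\vp\|_\infty \le \|\vp\|_\infty B_1^{-\al}$ (using $B_1^{-\al}\ge 1$), and on $[\max(1,B_1),B_2]$ the preceding argument with $1$ in place of $B_1$ yields $\|\vp\|_\infty + \|\vp'\|_\infty K^{1-\al} L^{-\al}$, again absorbed in $\|\vp\|_\infty B_1^{-\al}+\|\vp'\|_\infty K^{1-\al}L^{-\al}$.

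Neither bound presents any real obstacle; both rest on standard integration by parts, with the only minor subtleties being the careful book-keeping of cut-offs in case $B_1<1$ and the use of $L^{-\al}\le 1$ to match the form of the right-hand side in \eqref{a-1-a-2}.
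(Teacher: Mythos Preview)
Your argument is correct and follows essentially the same approach as the paper: integration by parts on the oscillatory factor $e^{\imath\la r}$, combined with the trivial bound near the integrable singularity at $r=0$. The only difference is organizational: the paper proves \eqref{b} first by a single integration by parts valid for \emph{all} $0<B_1\le B_2$ (no case split on whether $B_1\ge 1$ is needed, since both the boundary term and $\int_{B_1}^\infty r^{-\al-1}\,dr$ are already $\lesssim B_1^{-\al}\|\vp\|_\infty$), and then deduces \eqref{a-1-a-2} from \eqref{b} by splitting $\int_0^A$ at $r=1$; you instead prove \eqref{a-1-a-2} directly and then treat \eqref{b} with a superfluous case split at $B_1=1$.
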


\begin{proof}
For \eqref{b}, let us write, with an elementary integration by parts,
\begin{align*}
&\int_{B_1}^{B_2} \frac{dr}{r^\al} e^{\imath \la r}\vp\Big(\frac{r}{L}\Big)\\
&=\frac{1}{\imath \la} \frac{1}{B_2^\al} e^{\imath \la B_2}\vp\Big(\frac{B_2}{L}\Big)-\frac{1}{\imath \la} \frac{1}{B_1^\al} e^{\imath \la B_1}\vp\Big(\frac{B_1}{L}\Big)+\frac{\al}{\imath \la}\int_{B_1}^{B_2} \frac{dr}{r^{\al+1}} e^{\imath \la r}\vp\Big(\frac{r}{L}\Big)-\frac{1}{\imath \la L}\int_{B_1}^{B_2} \frac{dr}{r^{\al}} e^{\imath \la r}\vp'\Big(\frac{r}{L}\Big),
\end{align*}
which, since $\text{supp}\, \vp' \subset [-K,K]$,  gives
\begin{align*}
\bigg|\int_{B_1}^{B_2} \frac{dr}{r^\al} e^{\imath \la r}\vp\Big(\frac{r}{L}\Big)\bigg| \lesssim \frac{1}{B_1^\al} \big\|\vp\big\|_\infty+\big\|\vp\big\|_\infty\int_{B_1}^\infty \frac{dr}{r^{\al+1}} +\frac{1}{ L}\int_0^{KL} \frac{dr}{r^{\al}} \Big|\vp'\Big(\frac{r}{L}\Big)\Big|\lesssim \frac{1}{B_1^\al} \big\|\vp\big\|_\infty+\frac{K^{1-\al}}{L^\al} \big\|\vp'\big\|_\infty.
\end{align*}

\

As for \eqref{a-1-a-2}, it suffices to observe that for every $A\in [0,\infty]$, one has
\begin{align*}
\bigg|\int_0^A \frac{dr}{r^\al}e^{ \imath \la r} \vp\Big(\frac{r}{L}\Big)\bigg|&\leq \1_{\{0\leq A\leq 1\}}\|\vp\|_{\infty}\int_0^1 \frac{dr}{r^\al}+\1_{\{A\geq 1\}}\bigg[\|\vp\|_{\infty}\int_0^1 \frac{dr}{r^\al} +\bigg|\int_1^A \frac{dr}{r^\al}e^{ \imath\la r}\vp\Big(\frac{r}{L}\Big)\bigg|\bigg]\\
&\lesssim \|\vp\|_\infty+\1_{\{A\geq 1\}}\bigg|\int_1^A \frac{dr}{r^\al}e^{ \imath \la r} \vp\Big(\frac{r}{L}\Big)\bigg|.
\end{align*}
We can then use \eqref{b} to (uniformly) bound the latter integral and derive \eqref{a-1-a-2}. 


\end{proof}

\

\color{blue}

\subsection{Proof of Proposition \ref{prop:explo-white}}\label{sec:explo-white}

We assume in this section (and in this section only) that $H=\frac12$. Recall that in this case
$$\mathbb{E}\Big[\dot{\beta}^{(k)}_t \dot{\beta}^{(k')}_{t'}\Big]=\1_{\{k=k'\}} \delta_{\{t=t'\}},$$
and so
\begin{align}
&\mathbb{E}\Big[ \cf(\<Psi>^{(n)}_k)(\la)\overline{\cf(\<Psi>^{(n)}_{k'})(\la')}\Big]\nonumber\\
&=\1_{\{\langle k\rangle \leq 2^n\}}\1_{\{\langle k'\rangle \leq 2^n\}}\int d\xi d\xi'\, \ci_\chi(\la,\xi)\overline{\ci_\chi(\la',\xi')}\int dt dt' \, e^{-\imath (\xi+|k|^2) t}e^{\imath (\xi'+|k'|^2) t'}\mathbb{E}\Big[\dot{\beta}^{(k)}_t \dot{\beta}^{(k')}_{t'}\Big]\nonumber\\
&=\1_{\{k=k'\}}\1_{\{\langle k\rangle \leq 2^n\}}\int d\xi d\xi'\, \ci_\chi(\la,\xi)\overline{\ci_\chi(\la',\xi')}\int dt  \, e^{-\imath (\xi+|k|^2) t}e^{\imath (\xi'+|k|^2) t}\nonumber\\
&=\1_{\{k=k'\}}\1_{\{\langle k\rangle \leq 2^n\}}\int d\xi \, \ci_\chi(\la,\xi)\overline{\ci_\chi(\la',\xi)}\nonumber\\
&=c\, \1_{\{k=k'\}}\1_{\{\langle k\rangle \leq 2^n\}}\int dt dt' \, \chi(t) \chi(t')e^{-\imath \la t}e^{\imath \la' t'}\int_{\R} d\xi\,  \Xi_t(-\xi) \overline{\Xi_{t'}(-\xi)},\nonumber
\end{align}
which immediately extends \eqref{cova-refin}. As a result, we can follow the arguments leading to \eqref{expression-q} and conclude that for every $b>\frac12$,
\begin{equation*}
\mathbb{E}\Big[\big\|\widetilde{\<IPsi2>}^{(n)}\big\|^2_{Z^{0,b}} \Big]=\sum_{k\neq 0} \sum_{k_1\neq 0} \1_{\{\langle k_1\rangle\leq 2^n\}} \1_{\{\langle k+k_1\rangle\leq 2^n\}}\int d\xi d\eta\int d\la \, \langle \la \rangle^{2b} \big|Q^{\Omega_{k,k_1}}_{\xi,\eta}(\la)\big|^2, 
\end{equation*}
where $Q^L_{\xi,\eta}(\la)$ is the quantity introduced in \eqref{defi:qlbeta}. In particular,
\begin{align*}
\mathbb{E}\Big[\big\|\widetilde{\<IPsi2>}^{(n)}\big\|^2_{Z^{0,b}} \Big]
&\geq \sum_{k\geq 1} \sum_{k_1\geq 1}\1_{\{\langle k_1\rangle\leq 2^n\}} \1_{\{\langle k+k_1\rangle\leq 2^n\}}|\Omega_{k,k_1}|^{2}  \int d\xi d\eta \int d\la \, | \la |^{2b} \big|Q^{\Omega_{k,k_1}}_{\xi,\eta}(\la \Omega_{k,k_1})\big|^2 ,
\end{align*}
and we can use the decomposition $Q^{L}_{\xi,\eta}(\la L)=M^{L}_{\xi,\eta}(\la )+R^{L}_{\xi,\eta}(\la )$ exhibited in \eqref{decompoqlbeta} to derive that 
\begin{align*}
\mathbb{E}\Big[\big\|\widetilde{\<IPsi2>}^{(n)}\big\|^2_{Z^{0,b}} \Big]&\geq \sum_{k\geq 1} \sum_{k_1\geq 1}\1_{\{\langle k_1\rangle\leq 2^n\}} \1_{\{\langle k+k_1\rangle\leq 2^n\}}|\Omega_{k,k_1}|^{2}  \int d\xi d\eta \int d\la \, | \la |^{2b} \big|M^{\Omega_{k,k_1}}_{\xi,\eta}(\la )\big|^2 \\
&\hspace{0.5cm}-2\sum_{k\geq 1} \sum_{k_1\geq 1}|\Omega_{k,k_1}|^{2}  \int d\xi d\eta \int d\la \, | \la |^{2b} \big|M^{\Omega_{k,k_1}}_{\xi,\eta}(\la )\big| \big|R^{\Omega_{k,k_1}}_{\xi,\eta}(\la )\big|.
\end{align*}

\

By applying Lemma \ref{lem:estim-m-r}, we get that for all $\Omega\geq 1$, $k,\ell\geq 1$, $\frac12<b<1$ and $\varepsilon >0$ small enough,
\begin{align*}
&\int d\xi d\eta \int d\la \, | \la |^{2b} \big|M^{\Omega}_{\xi,\eta}(\la )\big|\big|R^{\Omega}_{\xi,\eta}(\la )\big|\\
&\lesssim \int d\xi d\eta \bigg[ \frac{1}{\langle \xi\rangle^{\frac32} \langle \eta\rangle^{\frac32}}\frac{1}{\Omega^{\frac72}}\int_{\{|\la|\geq 2\}} \frac{d\la}{|\la|^{\frac72-2b}}+\frac{1}{\langle \xi\rangle^{\frac12} \langle \eta\rangle^{\frac12}} \frac{1}{\Omega^{\frac72-\varepsilon}}\bigg( \int_{\{|\la|\leq 2\}} \frac{d\la}{|\la|^{2-2b} |\la-1|^{\frac12-\varepsilon}}\bigg)\\
&\hspace{1cm}\bigg(\frac{1}{\langle \xi\rangle \langle \eta\rangle} +\frac{1}{\langle \Omega+\xi\rangle \langle \eta\rangle}+\frac{1}{\langle \Omega+\xi\rangle \langle \Omega+\xi- \eta\rangle}+\frac{1}{\langle \Omega-\eta\rangle \langle \xi\rangle}+\frac{1}{\langle \Omega-\eta\rangle \langle \Omega+\xi-\eta\rangle}\bigg)\bigg]\lesssim \frac{1}{\Omega^{\frac72-\varepsilon}},
\end{align*}
where the last estimate can be easily deduced from Lemma \ref{lem:gtv}.

\smallskip

Therefore, for $\frac12<b<1$ and $\varepsilon >0$ small enough,
\begin{align*}
&\sum_{k\geq 1} \sum_{k_1\geq 1}|\Omega_{k,k_1}|^{2}  \int d\xi d\eta \int d\la \, | \la |^{2b} \big|M^{\Omega_{k,k_1}}_{\xi,\eta}(\la )\big| \big|R^{\Omega_{k,k_1}}_{\xi,\eta}(\la )\big|\lesssim \sum_{k\geq 1} \sum_{k_1\geq 1}\frac{1}{|\Omega_{k,k_1}|^{\frac32-\varepsilon}}\lesssim \bigg( \sum_{k} \frac{1}{\langle k\rangle^{\frac32-\varepsilon}}\bigg)^2,
\end{align*}
and we are thus left with the analysis of
\begin{equation*}
\mathfrak{M}^{(n)}:=\sum_{k\geq 1} \sum_{k_1\geq 1}\1_{\{\langle k_1\rangle\leq 2^n\}} \1_{\{\langle k+k_1\rangle\leq 2^n\}} |\Omega_{k,k_1}|^{2}  \int d\xi \int d\eta \int d\la \, | \la |^{2b} \big|M^{\Omega_{k,k_1}}_{\xi,\eta}(\la )\big|^2 .
\end{equation*}
To this end, write, for $\frac12<b<1$,
\begin{align*}
&\int d\xi \int d\eta \int d\la \, | \la |^{2b} \big|M^{\Omega_{k,k_1}}_{\xi,\eta}(\la )\big|^2\\
&=\frac{1}{| \Omega_{k,k_1}|^2}\int d\xi \int d\eta \int \frac{d\la}{| \la |^{2-2b}} \bigg|\int dt\, e^{-\imath \Omega_{k,k_1}t(\la-1)} \chi(t)^4\,  \overline{\Xi_t(-\xi)}\Xi_t(-\eta)\bigg|^2\\
&=\frac{1}{| \Omega_{k,k_1}|^3}\int d\xi \int d\eta \int \frac{d\la}{| 1+\frac{\la}{\Omega_{k,k_1}} |^{2-2b}} \bigg|\int dt\, e^{-\imath t\la} \chi(t)^4\,  \overline{\Xi_t(-\xi)}\Xi_t(-\eta)\bigg|^2\stackrel{k,k_1\to \infty}{\sim} \frac{C}{| \Omega_{k,k_1}|^3},
\end{align*}
where
$$C:=\int d\xi \int d\eta \int d\la\,  \bigg|\int dt\, e^{-\imath t\la} \chi(t)^4\,  \overline{\Xi_t(-\xi)}\Xi_t(-\eta)\bigg|^2.$$
It is easy to check that $C$ is finite and strictly positive, and accordingly by picking $K\geq 1$ large enough, one has
$$
\mathfrak{M}^{(n)}\gtrsim \sum_{k\geq K} \sum_{k_1\geq K}\1_{\{\langle k_1\rangle\leq 2^n\}} \1_{\{\langle k+k_1\rangle\leq 2^n\}} \frac{1}{|\Omega_{k,k_1}|} ,
$$
which, by letting $n$ tends to infinity, achieves the proof of our statement.

\

\color{black}

\section{Proof of Proposition \ref{prop:q-sharp-optim-intro}}\label{sec:prod-restr-space}

\subsection{Proof of Proposition \ref{prop:q-sharp-optim-intro}.}

\

\smallskip

Fix $b\in (\frac12,1)$ and $c\in [0,\frac32-2H)$. For every $n\geq 1$, define ${\cop Y^{(n)}}$ through the formula
\begin{equation}\label{fourieryn}
\cf\big({\cop Y^{(n)}_{k}}\big)(\la):=\1_{\{k\neq 0\}}\frac{1}{\langle k\rangle^{3-4H}}\frac{1}{\langle \la \rangle^2} \cf\big(\<Psi>^{(n)}_{k}\big)(\la).
\end{equation}
For every $q\geq 1$, one has
\begin{align*}
&\mathbb{E}\Big[\big\| {\cop Y^{(n)}}\big\|_{{\cop Z^{c,b}}}^{2q} \Big] 
\lesssim \mathbb{E}\bigg[\bigg(\sum_k\,  \langle k\rangle^{2c}\int d\la \, \langle \la\rangle^{2b} \big|\cf\big({\cop Y^{(n)}_{k}}\big)(\la)\big|^2\bigg)^q  \bigg]\\
&\lesssim \sum_{k_1,\ldots,k_q}\,  \frac{1}{\langle k_1\rangle^{6-8H-2c}}\cdots \frac{1}{\langle k_q\rangle^{6-8H-2c}}\int \frac{d\la_1}{\langle \la_1\rangle^{4-2b}} \cdots \frac{d\la_q}{\langle \la_q\rangle^{4-2b}} \, \mathbb{E}\Big[\big|\cf\big(\<Psi>^{(n)}_{k_1}\big)(\la_1)\big|^2 \cdots \big|\cf\big(\<Psi>^{(n)}_{k_q}\big)(\la_q)\big|^2\Big]\\
&\lesssim \bigg(\sum_{k}\,  \frac{1}{\langle k\rangle^{6-8H-2c}}\int \frac{d\la}{\langle \la\rangle^{4-2b}} \, \mathbb{E}\Big[\big|\cf\big(\<Psi>^{(n)}_{k}\big)(\la)\big|^{2q}\Big]^{\frac{1}{q}} \bigg)^q\lesssim \bigg(\sum_{k}\,  \frac{1}{\langle k\rangle^{6-8H-2c}}\int \frac{d\la}{\langle \la\rangle^{4-2b}} \, \mathbb{E}\Big[\big|\cf\big(\<Psi>^{(n)}_{k}\big)(\la)\big|^{2}\Big] \bigg)^q,
\end{align*}
where we have used the fact that $\cf\big(\<Psi>^{(n)}_{k}\big)(\la)$ is a Gaussian variable to derive the last inequality.

\smallskip

Now we can use the basic covariance estimate \eqref{cova-frac-estim} to assert that
\begin{align*}
\sum_{k}\,  \frac{1}{\langle k\rangle^{6-8H-2c}}\int \frac{d\la}{\langle \la\rangle^{4-2b}} \, \mathbb{E}\Big[\big|\cf\big(\<Psi>^{(n)}_{k}\big)(\la)\big|^{2}\Big]
&\lesssim\sum_{k}\,  \frac{1}{\langle k\rangle^{6-8H-2c}}\int \frac{d\la}{\langle \la\rangle^{6-2b}}\frac{1}{\langle \la+k^2\rangle^{2H-1}},
\end{align*}
{\cop which, thanks to Lemma \ref{lem:gtv}, yields} 
\begin{align*}
\sum_{k}\,  \frac{1}{\langle k\rangle^{6-8H-2c}}\int \frac{d\la}{\langle \la\rangle^{4-2b}} \, \mathbb{E}\Big[\big|\cf\big(\<Psi>^{(n)}_{k}\big)(\la)\big|^{2}\Big]
&\lesssim\sum_{k}\,  \frac{1}{\langle k\rangle^{4-4H-2c}} \ < \ \infty,
\end{align*}
due to $c<\frac32-2H$. We have thus checked the first part of \eqref{bound-z-n}: namely, for every $q\geq 2$,
\begin{equation}\label{sup-yn}
\sup_{n\geq 0} \mathbb{E}\Big[\big\|{\cop Y^{(n)}}\big\|_{{\cop Z^{c,b}}}^q \Big] < \infty. 
\end{equation}

\

We now intend to show that {\cop the second part of \eqref{bound-z-n}, that is}
\begin{equation}
\mathbb{E}\Big[\big\| \cl^{\circ,(n)}{\cop Y^{(n)}}\big\|_{{\cop Z^{0,0}}}^2\Big] \stackrel{n\to\infty}{\longrightarrow} \infty.
\end{equation}

\smallskip

Let us start the analysis by writing, along {\cop \eqref{cl-circ-1-1}, and with expression \eqref{fourieryn} of $\cf\big({\cop Y^{(n)}_{k}}\big)(\la)$ in mind},
\begin{align*}
&\cf\big( \cl^{\circ,(n)}( {\cop Y^{(n)}})_k\big)(\la)\\
&=\sum_{k_1\neq 0}\1_{\{k=0\}\cup\{k_1=k\}}\frac{1}{\langle k_1\rangle^{3-4H}}\int d\la_1 \, \ci_\chi(\la,\la_1) \int \frac{d\la_2}{\langle \la_2 \rangle^2} \, \overline{\cf\big(\<Psi>^{(n)}_{k_1-k}\big)(\la_2-\la_1)}\cf\big(\<Psi>^{(n)}_{k_1}\big)(\la_2),
\end{align*}
and so
\begin{align}
&\mathbb{E}\Big[\big\| \cl^{\circ,(n)}{\cop Y^{(n)}}\big\|_{{\cop Z^{0,0}}}^2\Big]=\sum_k \int d\la \, \mathbb{E}\Big[\big|\cf\big( \cl^{\circ,(n)}( {\cop Y^{(n)}})_k\big)(\la)\big|^2\Big]\nonumber\\
&\geq \int d\la \, \mathbb{E}\Big[\big|\cf\big( \cl^{\circ,(n)}( {\cop Y^{(n)}})_0\big)(\la)\big|^2\Big]\label{inega-intro}\\
&\geq \int d\la \, \mathbb{E}\bigg[\bigg|\sum_{k_1\neq 0}\frac{1}{\langle k_1\rangle^{3-4H}}\int d\la_1 \, \ci_{\chi}(\la,\la_1)\int \frac{d\la_2}{\langle \la_2\rangle^2} \, \overline{\cf\big(\<Psi>^{(n)}_{k_1}\big)(\la_2-\la_1)} \cf\big(\<Psi>^{(n)}_{k_1}\big)(\la_2)\bigg|^2\bigg]\nonumber\\
&\geq \int d\la \, \bigg|\sum_{k_1\neq 0}\frac{1}{\langle k_1\rangle^{3-4H}}\int d\la_1   \, \ci_{\chi}(\la,\la_1)\int \frac{d\la_2}{\langle \la_2\rangle^2}\, \mathbb{E}\Big[\overline{\cf\big(\<Psi>^{(n)}_{k_1}\big)(\la_2-\la_1)} \cf\big(\<Psi>^{(n)}_{k_1}\big)(\la_2) \Big]\bigg|^2 \ =: \ \mathbb{I}^{(n)},\nonumber
\end{align}
where we have used Jensen's inequality to derive the last inequality.

\smallskip

It remains us to prove that
\begin{equation}
\mathbb{I}^{(n)} \stackrel{n\to\infty}{\longrightarrow} \infty.
\end{equation}
To this end, we can use the notation of the subsequent Lemma \ref{lem:decompo-covar} to write
\small
\begin{align*}
&\mathbb{I}^{(n)}=\int d\la \, \bigg|\sum_{k_1\neq 0}\1_{\{\langle k_1\rangle \leq 2^n\}}\frac{1}{\langle k_1\rangle^{3-4H}}\int d\la_1   \, \ci_{\chi}(\la,\la_1)\int \frac{d\la_2}{\langle \la_2\rangle^2}\, \bigg[\frac{1}{|k_1|^{4H-2}} M(\la_2-\la_1,\la_2)+R_{k_1}(\la_2-\la_1,\la_2)\bigg]\bigg|^2\\
&=\int d\la \, \bigg|\bigg(\sum_{{\cop k_1\neq 0}}\1_{\{\langle k_1\rangle \leq 2^n\}}\frac{1}{\langle k_1\rangle^{3-4H}}\frac{1}{|k_1|^{4H-2}}\bigg) \bigg( \int d\la_1   \, \ci_{\chi}(\la,\la_1)\int \frac{d\la_2}{\langle \la_2\rangle^2}\, M(\la_2-\la_1,\la_2)\bigg)\\
&\hspace{4cm}+\sum_{k_1\neq 0}\1_{\{\langle k_1\rangle \leq 2^n\}}\frac{1}{\langle k_1\rangle^{3-4H}}\int d\la_1   \, \ci_{\chi}(\la,\la_1)\int \frac{d\la_2}{\langle \la_2\rangle^2}\, R_{k_1}(\la_2-\la_1,\la_2)\bigg|^2.
\end{align*}
\normalsize
{\cop Using the inequalities $|a+b|^2 \geq |a|^2-2|a||b|$ and $\1_{\{\langle k_1\rangle \leq 2^n\}}\frac{1}{\langle k_1\rangle^{3-4H}}\leq 1$ (recall that $H<\frac34$), we obtain}
\begin{align}
\mathbb{I}^{(n)}&\geq |\mathbb{S}^{(n)}|^2\int d\la \, \bigg| \int d\la_1   \, \ci_{\chi}(\la,\la_1)\int \frac{d\la_2}{\langle \la_2\rangle^2}\, M(\la_2-\la_1,\la_2)\bigg|^2\nonumber\\
&\hspace{0.5cm}-2\,  |\mathbb{S}^{(n)}|\cdot \int d\la \, \bigg[ \int d\la_1   \, \big|\ci_{\chi}(\la,\la_1)\big|\int \frac{d\la_2}{\langle \la_2\rangle^2}\, \big|M(\la_2-\la_1,\la_2)\big|\bigg]\nonumber\\
&\hspace{4cm}\bigg[\sum_{k_1\neq 0}\int d\la'_1   \, \big|\ci_{\chi}(\la,\la'_1)\big|\int \frac{d\la'_2}{\langle \la'_2\rangle^2}\, \big|R_{k_1}(\la'_2-\la'_1,\la'_2)\big|\bigg].\label{lb-i-c-al}
\end{align}
where we have set
\begin{align*}
\mathbb{S}^{(n)}:=\sum_{{\cop k_1\neq 0}}\1_{\{\langle k_1\rangle \leq 2^n\}}\frac{1}{\langle k_1\rangle^{3-4H}}\frac{1}{|k_1|^{4H-2}}.
\end{align*}

\

By combining the definition \eqref{def-m} and the estimate \eqref{bound-r-integr} below, it is easy to check that the integral
$$ \int d\la \, \bigg[ \int d\la_1   \, \big|\ci_{\chi}(\la,\la_1)\big|\int \frac{d\la_2}{\langle \la_2\rangle^2}\, \big|M(\la_2-\la_1,\la_2)\big|\bigg]\bigg[\sum_{k_1\neq 0}\int d\la'_1   \, \big|\ci_{\chi}(\la,\la'_1)\big|\int \frac{d\la'_2}{\langle \la'_2\rangle^2}\, \big|R_{k_1}(\la'_2-\la'_1,\la'_2)\big|\bigg]$$
is finite. Indeed, on the one hand, it holds that
\begin{align*}
\int d\la_1\, \big|\ci_{\chi}(\la,\la_1)\big|\int \frac{d\la_2}{\langle \la_2\rangle^2}\, \big|M(\la_2-\la_1,\la_2)\big|
&\lesssim \frac{1}{\langle \la\rangle} \int  \frac{d\la_1}{\langle \la-\la_1\rangle}\int \frac{d\la_2}{\langle \la_2\rangle^3}\, \frac{1}{\langle \la_2-\la_1\rangle}\\
&\lesssim \frac{1}{\langle \la\rangle} \int  \frac{d\la_1}{\langle \la-\la_1\rangle \langle \la_1\rangle}\lesssim \frac{1}{\langle \la\rangle^{2-\varepsilon}}, 
\end{align*}
{\cop where we have used Lemma \ref{lem:gtv} to derive the last two estimates.} On the other hand,
\begin{align*}
&\sum_{k_1\neq 0}\int d\la'_1   \, \big|\ci_{\chi}(\la,\la'_1)\big|\int \frac{d\la'_2}{\langle \la'_2\rangle^2}\, \big|R_{k_1}(\la'_2-\la'_1,\la'_2)\big|\\
&\lesssim \sum_{k_1\neq 0}\int d\la'_1 \int \frac{d\la'_2}{\langle \la'_2\rangle^2}\, \big|R_{k_1}(\la'_2-\la'_1,\la'_2)\big|\lesssim \sum_{k_1\neq 0}\int d\la'_1 \int \frac{d\la'_2}{\langle \la'_2\rangle^2}\, \big|R_{k_1}(\la'_1,\la'_2)\big|\lesssim \sum_{k_1\neq 0} \frac{1}{|k_1|^{4H-2\varepsilon}}  \lesssim 1,
\end{align*}
where we have used \eqref{bound-r-integr}.

\smallskip

We can thus rephrase \eqref{lb-i-c-al} as
\begin{equation}\label{lb-i-bis}
\mathbb{I}^{(n)}\geq c_0 \, |\mathbb{S}^{(n)}|^2-c_1\,  |\mathbb{S}^{(n)}|=|\mathbb{S}^{(n)}|^2 \bigg[c_0-\frac{c_1}{|\mathbb{S}^{(n)}|}\bigg],
\end{equation}
where $c_1\geq 0$ is a finite constant and
$$c_0:=\int d\la \, \bigg| \int d\la_1   \, \ci_{\chi}(\la,\la_1)\int \frac{d\la_2}{\langle \la_2\rangle^2}\, M(\la_2-\la_1,\la_2)\bigg|^2.$$

\smallskip

Observe that $\mathbb{S}^{(n)} \stackrel{n\to\infty}{\longrightarrow} \infty$, and therefore, based on \eqref{lb-i-bis}, it only remains us to guarantee that $c_0>0$.

\smallskip

To this end, write
\begin{align*}
c_0&=\int d\la \, \bigg| \int d\la_1   \, \ci_{\chi}(\la,\la_1)\int \frac{d\la_2}{\langle \la_2\rangle^2}\, M(\la_2-\la_1,\la_2)\bigg|^2\\
&=c\int d\la \, \bigg| \int dt \, e^{-\imath \la t} \chi(t)\int d\la_1 \, \Xi_t(\la_1)\int \frac{d\la_2}{\langle \la_2\rangle^2}\int ds \, \chi(s)^2 \Xi^s(\la_2-\la_1)\overline{\Xi^s(\la_2)}\bigg|^2\\
&=c \int dt \, \chi(t)^2\bigg| \int ds \, \chi(s)^2 \int \frac{d\la_2}{\langle \la_2\rangle^2}\bigg(\int d\la_1 \, \Xi_t(\la_1)\Xi^s(\la_2-\la_1)\bigg)\overline{\Xi^s(\la_2)}\bigg|^2,
\end{align*}
{\cop where we have used the classical Fourier isometry in $L^2(\R)$ to derive the last equality.} Then
\begin{align*}
\int d\la_1 \, \Xi_t(\la_1)\Xi^s(\la_2-\la_1)&=\int dr \, \chi(r)\1_{[0,t]}(r)\int dv \, \chi(v)\1_{[0,v]}(s)\int d\la_1 \, e^{\imath r \la_1}e^{\imath v(\la_2-\la_1)}\\
&=\int dv \, \chi(v)^2\1_{[0,t]}(v)\1_{[0,v]}(s) e^{\imath v \la_2},
\end{align*}
which yields
\begin{align*}
c_0&=c \int dt \, \chi(t)^2\bigg| \int ds \, \chi(s)^2 \int dr \, \chi(r)\1_{[0,r]}(s)\int dv \, \chi(v)^2\1_{[0,t]}(v)\1_{[0,v]}(s) \int \frac{d\la_2}{\langle \la_2\rangle^2}e^{-\imath \la_2 (r-v)}\bigg|^2\\
&=c \int dt \, \chi(t)^2\bigg| \int ds \, \chi(s)^2 \int dr \, \chi(r)\1_{[0,r]}(s)\int dv \, \chi(v)^2\1_{[0,t]}(v)\1_{[0,v]}(s) e^{-|r-v|}\bigg|^2,
\end{align*}
where we have used the classical identity
$$\int \frac{d\la}{\langle \la\rangle^2}e^{-\imath \la t}=c\, e^{-|t|}.$$
Finally, since $\chi\geq 0$ and $\chi\equiv 1$ on $[-1,1]$, we obtain that
\begin{align*}
c_0&\geq c \int_0^1 dt \, \chi(t)^2\bigg| \int_0^t ds \, \chi(s)^2 \int_s^t dr \, \chi(r)\1_{[0,r]}(s)\int_s^t dv \, \chi(v)^2\1_{[0,t]}(v)\1_{[0,v]}(s) e^{-|r-v|}\bigg|^2\\
&\geq c \int_0^1 dt \, \bigg| \int_0^t ds \,  \int_s^t dr \, \int_s^t dv \, e^{-|r-v|}\bigg|^2 \, > \, 0,
\end{align*}
as desired.

\

\cop

Once endowed with the properties of $Y^{(n)}$ in \eqref{bound-z-n}, we can write for $p>1$ and $q:=\frac{p}{p-1}$,
\begin{align*}
\mathbb{E}\Big[ \big\| \cl^{\circ,(n)} Y^{(n)}\big\|_{L^2(\T \times \R)}^2\Big] &\lesssim \mathbb{E}\Big[ \big\| \cl^{\circ,(n)} Y^{(n)}\big\|_{Z^{c,b}}^2\Big]\\
&\lesssim \mathbb{E}\Big[\big\|\cl^{\circ,(n)}\big\|_{Z^{c,b} \to Z^{c,b}}^{2} \big\|{\cop Y^{(n)}}\big\|_{Z^{c,b}}^2\Big]\\
&\lesssim \mathbb{E}\Big[\big\|\cl^{\circ,(n)}\big\|_{Z^{c,b} \to Z^{c,b}}^{2p}\Big]^{\frac{1}{2p}} \mathbb{E}\Big[ \big\|{\cop Y^{(n)}}\big\|_{Z^{c,b}}^{2q}\Big]^{\frac{1}{2q}}\lesssim \mathbb{E}\Big[\big\|\cl^{\circ,(n)}\big\|_{Z^{c,b} \to Z^{c,b}}^{2p}\Big]^{\frac{1}{2p}},
\end{align*}
due to \eqref{sup-yn}. Since $\mathbb{E}\Big[ \big\| \cl^{\circ,(n)}{\cop Y^{(n)}}\big\|_{L^2(\T \times \R)}^2\Big] \to \infty$ as $n\to \infty$, we obtain the desired explosion in \eqref{explos-op-1}. The assertion \eqref{explos-op-2} then naturally follows from the same arguments, noting that
$$ \big\| \cl^{\circ,(n)}{ Y^{(n)}}\big\|_{L^2(\T \times \R)}\lesssim \big\| \cl^{\circ,(n)}{\cop Y^{(n)}}\big\|_{\widetilde{Z}^{c,b}} \quad \text{and} \quad \big\|Y^{(n)}\big\|_{\widetilde{Z}^{c,b}} \lesssim \big\| Y^{(n)}\big\|_{Z^{c,b}}.$$

\color{black}

\

\subsection{Auxiliary lemma}

\

\smallskip

The following decomposition result has been used in the proof of Proposition \ref{prop:q-sharp-optim-intro}.

\begin{lemma}\label{lem:decompo-covar}
In the setting of Proposition \ref{prop:q-sharp-optim-intro}, one has for every $(k_1,k_1')\neq (0,0)$,
\begin{equation}\label{decompo-covar}
\mathbb{E}\Big[\overline{\cf\big(\<Psi>^{(n)}_{k_1}\big)(\la)}\cf\big(\<Psi>^{(n)}_{k'_1}\big)(\la')  \Big]=\1_{\{k_1=k_1'\}}\1_{\{\langle k_1\rangle \leq 2^n\}} \bigg[\frac{1}{|k_1|^{4H-2}} M(\la,\la')+R_{k_1}(\la,\la')\bigg],
\end{equation}
where, using the notation introduced in \eqref{xi-up-t}, we have set
\begin{equation}\label{def-m}
M(\la,\la'):=c\int_{\R} ds \,  \chi(s)^2 \, \Xi^s(\la ) \overline{\Xi^s(\la')}  
\end{equation}
for some constant $c>0$, and the function $R$ is such that for all $k_1\neq 0$ and $\varepsilon >0$,
\begin{equation}\label{bound-r-integr}
\int d\la_1\int \frac{d\la_2}{\langle \la_2\rangle^2}\big|R_{k_1}(\la_1,\la_2)\big|\lesssim \frac{1}{|k_1|^{4H-2\varepsilon}}.
\end{equation}

\end{lemma}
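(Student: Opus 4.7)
The plan is to return to the covariance formula \eqref{cova-frac} for the conjugate pairing, perform the shift $\mu := \xi + k_1^2$ in order to factor out the scale $|k_1|^{-(4H-2)}$, and then isolate a ``rate-one'' limiting piece (whose Plancherel evaluation produces $M$) from a correction piece which will define $R_{k_1}$. Explicitly, taking the complex conjugate of \eqref{cova-frac} and applying the substitution $\mu = \xi + k_1^2$ yields
$$\mathbb{E}\Big[\overline{\cf(\<Psi>^{(n)}_{k_1})(\la)}\cf(\<Psi>^{(n)}_{k_1'})(\la')\Big] = \frac{c\, \1_{\{k_1=k_1'\}}\1_{\{\langle k_1\rangle \leq 2^n\}}}{|k_1|^{4H-2}}\int_{\R} \frac{d\mu}{|1-\mu/k_1^2|^{2H-1}}\overline{\ci_\chi(\la,-\mu)}\ci_\chi(\la',-\mu),$$
after which the decomposition $|1-\mu/k_1^2|^{-(2H-1)} = 1 + g_{k_1}(\mu)$ with $g_{k_1}(\mu):=|1-\mu/k_1^2|^{-(2H-1)}-1$ cleanly splits the $\mu$-integrand into a main piece (to be identified with $M$) and a correction piece (to be identified with $R_{k_1}$).

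For the main piece, I would compute $\int d\mu\, \overline{\ci_\chi(\la,-\mu)}\ci_\chi(\la',-\mu)$ via Plancherel: since $\Xi_t(-\mu)$ is the Fourier transform of $r\mapsto \chi(r)\1_{[0,t]}(r)$, one has $\int d\mu\, \overline{\Xi_t(-\mu)}\Xi_{t'}(-\mu) = 2\pi \int dr\, \chi(r)^2 \1_{[0,t]}(r)\1_{[0,t']}(r)$, and plugging this identity into the representation \eqref{ci-xi} of $\ci_\chi$ and applying Fubini gives
$$\int d\mu\, \overline{\ci_\chi(\la,-\mu)}\ci_\chi(\la',-\mu) = 2\pi \int dr\, \chi(r)^2\, \Xi^r(\la)\overline{\Xi^r(\la')},$$
which is precisely $M(\la,\la')$ up to the constant $c$.

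For the remainder, the pointwise bound \eqref{ker-i} together with Fubini reduces \eqref{bound-r-integr} to the scalar estimate
$$\int d\mu\, \big|g_{k_1}(\mu)\big|\cdot \frac{1}{\langle\mu\rangle^{2-\varepsilon}} \lesssim \frac{1}{|k_1|^{2-2\varepsilon}},$$
which I would establish by splitting the $\mu$-integration into three zones according to the size of $|\mu|$ relative to $k_1^2$. On $\{|\mu|\leq k_1^2/2\}$, a first-order Taylor expansion of $|1-\cdot|^{-(2H-1)}$ gives $|g_{k_1}(\mu)|\lesssim |\mu|/k_1^2$; on $\{k_1^2/2 \leq |\mu|\leq 2k_1^2\}$ the singularity $g_{k_1}(\mu)\sim |1-\mu/k_1^2|^{-(2H-1)}$ is integrable and the weight $\langle\mu\rangle^{-(2-\varepsilon)}$ is uniformly $\lesssim k_1^{2\varepsilon-4}$ across that window; on $\{|\mu|\geq 2k_1^2\}$ one simply has $|g_{k_1}(\mu)|\lesssim 1$. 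Each region contributes $\lesssim |k_1|^{2\varepsilon-2}$, which yields the claim. The main obstacle is precisely this last step: the singularity of $g_{k_1}$ near $\mu = k_1^2$ has to be carefully balanced against the polynomial weight, using that the dangerous zone has both width $\sim k_1^2$ and location $\sim k_1^2$ so that the two $k_1$-gains combine to produce the desired decay.
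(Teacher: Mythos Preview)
Your proposal is correct and follows essentially the same route as the paper: both shift the integration variable to isolate the scale $|k_1|^{-(4H-2)}$, split off the constant-weight piece (which you identify via Plancherel, the paper via Fourier inversion of an auxiliary function $A_{\la,\la'}$ satisfying $\cf(A_{\la,\la'})(\mu)=\overline{\ci_\chi(\la,-\mu)}\ci_\chi(\la',-\mu)$), and then bound the remainder using the kernel estimate \eqref{ker-i} together with a zone decomposition of the $\mu$-integral. Your three-zone split is a slightly finer variant of the paper's two-region split, but the content is the same.
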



\begin{proof}

Recall that by \eqref{cova-frac}, one has
\begin{align*}
\mathbb{E}\Big[\overline{\cf\big(\<Psi>^{(n)}_{k_1}\big)(\la)}\cf\big(\<Psi>^{(n)}_{k_1'}\big)(\la')  \Big]
&=c\, \1_{\{k_1=k_1'\}}\1_{\{\langle k_1\rangle \leq 2^n\}}\int_{\R} \frac{d\xi}{|\xi|^{2H-1}} \overline{\ci_\chi(\la,-\xi-k_1^2)}\ci_\chi(\la',-\xi-k_1^2),
\end{align*}
and thus
\begin{align}
&\mathbb{E}\Big[\overline{\cf\big(\<Psi>^{(n)}_{k_1}\big)(\la)}\cf\big(\<Psi>^{(n)}_{k_1'}\big)(\la')  \Big]=c\, \1_{\{k_1=k_1'\}}\1_{\{\langle k_1\rangle \leq 2^n\}}\nonumber\\
&\hspace{0.5cm}\int_{\R} \frac{d\xi}{|\xi|^{2H-1}} \int_{\R} dt\, e^{\imath \la t}\chi(t)\int_{\R} ds \, e^{\imath (\xi+k_1^2) s} \chi(s)\1_{[0,t]}(s)\int_{\R} dt'\, e^{-\imath \la' t'}\chi(t')\int_{\R} ds' \, e^{-\imath (\xi+k_1^2)s'} \chi(s')\1_{[0,t']}(s')\nonumber\\
&=c\, \1_{\{k_1=k_1'\}}\1_{\{\langle k_1\rangle \leq 2^n\}}\int_{\R} \frac{d\xi}{|\xi|^{2H-1}}\int_{\R} ds \int_{\R} ds' \, e^{-\imath (\xi+k_1^2) (s'-s)}\nonumber\\
&\hspace{5cm}\bigg[ \chi(s) \chi(s')\int_{\R} dt\, e^{\imath \la t}\chi(t)\1_{[0,t]}(s) \int_{\R} dt'\, e^{-\imath \la' t'}\chi(t')\1_{[0,t']}(s') \bigg]\nonumber\\
&=c\, \1_{\{k_1=k_1'\}}\1_{\{\langle k_1\rangle \leq 2^n\}}\int_{\R} \frac{d\xi}{|\xi|^{2H-1}}\int_{\R} dr \, e^{-\imath (\xi+k_1^2) r}\nonumber\\
&\hspace{3cm}\bigg[\int_{\R} ds \, \,  \chi(s) \chi(s+r)\int_{\R} dt\, e^{\imath \la t}\chi(t)\1_{[0,t]}(s) \int_{\R} dt'\, e^{-\imath \la' t'}\chi(t') \1_{[0,t']}(s+r)\bigg]\nonumber\\
&=c\, \1_{\{k_1=k_1'\}} \1_{\{\langle k_1\rangle \leq 2^n\}}\int_{\R} \frac{d\xi}{|\xi|^{2H-1}}\cf\big(A_{\la,\la'}\big)(\xi+k_1^2)\nonumber\\
&=c\, \1_{\{k_1=k_1'\}}\1_{\{\langle k_1\rangle \leq 2^n\}}\int_{\R} \frac{d\xi}{|\xi-k_1^2|^{2H-1}}\cf\big(A_{\la,\la'}\big)(\xi),\label{fonct-inter-a}
\end{align}
where we have set
$$A_{\la,\la'}(r):=\int_{\R} ds \, \,  \chi(s) \chi(s+r)\int_{\R} dt\, e^{\imath \la t}\chi(t)\1_{[0,t]}(s) \int_{\R} dt'\, e^{-\imath \la' t'}\chi(t') \1_{[0,t']}(s+r).$$
With this notation, let us define
$$M(\la,\la'):=c\int_{\R} d\xi\, \cf\big(A_{\la,\la'}\big)(\xi)=c \, A_{\la,\la'}(0)$$
and
$$R_{k_1}(\la,\la'):=c\int_{\R}d\xi\, \bigg[ \frac{1}{|\xi-k_1^2|^{2H-1}}-\frac{1}{|k_1^2|^{2H-1}}\bigg]\cf\big(A_{\la,\la'}\big)(\xi),$$
which, going back to \eqref{fonct-inter-a}, yields the decomposition \eqref{decompo-covar}.

\

In order to check \eqref{bound-r-integr}, let us write
\begin{align}
\big|R_{k_1}(\la,\la')\big|\lesssim &\int_{-\infty}^{\frac12 k_1^2}d\xi\, \bigg| \frac{1}{|\xi-k_1^2|^{2H-1}}-\frac{1}{|k_1^2|^{2H-1}}\bigg| \big|\cf\big(A_{\la,\la'}\big)(\xi)\big|\nonumber\\
&\hspace{1cm}+\int_{\frac12 k_1^2}^{\infty}d\xi\, \bigg| \frac{1}{|\xi-k_1^2|^{2H-1}}-\frac{1}{|k_1^2|^{2H-1}}\bigg| \big|\cf\big(A_{\la,\la'}\big)(\xi)\big|.\label{bou-r-0}
\end{align}
On the one hand, for every $\xi\in (-\infty, \frac12 k_1^2)$, one has $|\xi-k_1^2|\geq \frac12 k_1^2$, and so
\begin{align*}
\bigg| \frac{1}{|\xi-k_1^2|^{2H-1}}-\frac{1}{|k_1^2|^{2H-1}}\bigg|&\lesssim \bigg| \frac{1}{|\xi-k_1^2|^{2H-1}}-\frac{1}{|k_1^2|^{2H-1}}\bigg|^\varepsilon\bigg| \frac{1}{|\xi-k_1^2|^{2H-1}}-\frac{1}{|k_1^2|^{2H-1}}\bigg|^{1-\varepsilon}\\
&\lesssim \frac{1}{|k_1^2|^{(2H-1)\varepsilon}} \frac{1}{|k_1^2|^{2H(1-\varepsilon)}} |\xi|^{1-\varepsilon}\lesssim \frac{|\xi|^{1-\varepsilon}}{|k_1|^{4H-2\varepsilon}},
\end{align*}
which entails
\begin{equation}
\int_{-\infty}^{\frac12 k_1^2}d\xi\, \bigg| \frac{1}{|\xi-k_1^2|^{2H-1}}-\frac{1}{|k_1^2|^{2H-1}}\bigg| \big|\cf\big(A_{\la,\la'}\big)(\xi)\big|\lesssim \frac{1}{|k_1|^{4H-2\varepsilon}}\int_{\R}d\xi\, |\xi|^{1-\varepsilon}\big|\cf\big(A_{\la,\la'}\big)(\xi)\big|.\label{bou-r-1}
\end{equation}
On the other hand,
\begin{align}
\int_{\frac12 k_1^2}^{\infty}d\xi\, \bigg| \frac{1}{|\xi-k_1^2|^{2H-1}}-\frac{1}{|k_1^2|^{2H-1}}\bigg| \big|\cf\big(A_{\la,\la'}\big)(\xi\big)\big|&=|k_1|^{4-4H}\int_{\frac12}^{\infty }d\xi\, \bigg| \frac{1}{|\xi-1|^{2H-1}}-1\bigg| \big|\cf\big(A_{\la,\la'}\big)(k_1^2\xi)\big|.\label{bou-r-2}
\end{align}
By injecting \eqref{bou-r-1} and \eqref{bou-r-2} into \eqref{bou-r-0}, we obtain that
\begin{align}
\big|R_{k_1}(\la,\la')\big|&\lesssim \frac{1}{|k_1|^{4H-2\varepsilon}}\int_{\R}d\xi\, |\xi|^{1-\varepsilon}\big|\cf\big(A_{\la,\la'}\big)(\xi)\big|+|k_1|^{4-4H}\int_{\frac12}^{\infty }d\xi\, \bigg| \frac{1}{|\xi-1|^{2H-1}}-1\bigg| \big|\cf\big(A_{\la,\la'}\big)(k_1^2\xi)\big|.\label{major-r-1}
\end{align}
In order to estimate $\cf\big(A_{\la,\la'}\big)$ in the above integrals, observe that
\begin{align*}
A_{\la,\la'}(r)&=\int_{\R} ds \, \,  \chi(s) \chi(s+r)\int_{\R} dt\, e^{\imath \la t}\chi(t)\1_{[0,t]}(s) \int_{\R} dt'\, e^{-\imath \la' t'}\chi(t') \1_{[0,t']}(s+r)\\
&=\int_{\R} ds \, \bigg[\chi(r-s) \int_{\R} dt'\, e^{-\imath \la' t'}\chi(t')\1_{[0,t']}(r-s)\bigg]\bigg[  \chi(-s) \int_{\R} dt\, e^{\imath \la t}\chi(t)\1_{[0,t]}(-s)\bigg]\\
&=\big( f_{\la'} \ast g_\la\big)(r),
\end{align*}
with
$$f_{\la'}(s):=\chi(s) \int_{\R} dt'\, e^{-\imath \la' t'}\chi(t')\1_{[0,t']}(s) \quad \text{and} \quad g_{\la}(s):=\chi(-s) \int_{\R} dt\, e^{\imath \la t}\chi(t)\1_{[0,t]}(-s) .$$
As a result,
$$\big|\cf\big(A_{\la,\la'}\big)(\xi)\big| \lesssim  \big|\cf\big(f_{\la'}\big)(\xi)\big|  \big|\cf\big(g_{\la}\big)(\xi)\big| .$$
Then 
\begin{align*}
\big|\cf\big(f_{\la'}\big)(\xi)\big|&=\bigg| \int_{\R} ds\, e^{-\imath \xi s}\chi(s) \int_{\R} dt'\, e^{-\imath \la' t'}\chi(t')\1_{[0,t']}(s) \bigg|\\
&=\bigg|\int_{\R} dt'\, e^{-\imath \la' t'}\chi(t')\int_0^{t'} ds\, e^{-\imath \xi s}\chi(s)\bigg|\ \lesssim \ \frac{1}{\langle \la'\rangle}\frac{1}{\langle \xi+\la'\rangle},
\end{align*}
{\cop where the last estimate can be easily derived from integration-by-parts arguments}. In the same way,
\begin{align*}
\big|\cf\big(g_{\la}\big)(\xi)\big|&\lesssim \frac{1}{\langle \la\rangle}\frac{1}{\langle \xi+\la\rangle},
\end{align*}
which yields
$$\big|\cf\big(A_{\la,\la'}\big)(\xi)\big|\lesssim \frac{1}{\langle \la\rangle}\frac{1}{\langle \la'\rangle} \frac{1}{\langle \xi+\la\rangle}\frac{1}{\langle \xi+\la'\rangle}.$$
Therefore
\begin{align*}
\int_{\R}d\xi\, |\xi|^{1-\varepsilon}\big|\cf\big(A_{\la,\la'}\big)(\xi)\big|&\lesssim \frac{1}{\langle \la \rangle \langle \la'\rangle}\int_{\R}d\xi\, \frac{|\xi|^{1-\varepsilon}}{\langle \xi+\la'\rangle \langle \xi+\la\rangle},
\end{align*}
while
\begin{align*}
\int_{\frac12}^{\infty }d\xi\, \bigg| \frac{1}{|\xi-1|^{2H-1}}-1\bigg| \big|\cf\big(A_{\la,\la'}\big)(k_1^2\xi)\big|&\lesssim \frac{1}{\langle \la\rangle}\frac{1}{\langle \la'\rangle} \int_{\frac12}^{\infty }d\xi\, \bigg| \frac{1}{|\xi-1|^{2H-1}}-1\bigg|\frac{1}{\langle \la+k_1^2\xi\rangle}\frac{1}{\langle \la'+k_1^2\xi\rangle}.
\end{align*}
Going back to \eqref{major-r-1}, we obtain the estimate
\begin{align*}
&\big|R_{k_1}(\la,\la')\big|\\
&\lesssim \frac{1}{\langle \la\rangle}\frac{1}{\langle \la'\rangle}\bigg[\frac{1}{|k_1|^{4H-2\varepsilon}}\int_{\R}d\xi\, \frac{|\xi|^{1-\varepsilon}}{\langle \xi+\la'\rangle \langle \xi+\la\rangle}+| k_1|^{4-4H}\int_{\frac12}^{\infty }d\xi\, \bigg| \frac{1}{|\xi-1|^{2H-1}}-1\bigg|\frac{1}{\langle \la+k_1^2\xi\rangle}\frac{1}{\langle \la'+k_1^2\xi\rangle}\bigg].
\end{align*}
Once endowed with this bound, the derivation of \eqref{bound-r-integr} easily follows. Namely, for $k_1\neq 0$,
\begin{align*}
\int d\la_1\int \frac{d\la_2}{\langle \la_2\rangle^2}\big|R_{k_1}(\la_1,\la_2)\big|
&\lesssim \frac{1}{|k_1|^{4H-2\varepsilon}}\int_{\R}d\xi\, |\xi|^{1-\varepsilon}\int \frac{d\la_1}{\langle \la_1\rangle} \frac{1}{\langle \la_1+\xi\rangle }\int \frac{d\la_2}{\langle \la_2\rangle^3}\frac{1}{ \langle \la_2+\xi\rangle}\\
&+ \frac{1}{|k_1|^{4H-4}}\int_{\frac12}^{\infty }d\xi\, \bigg| \frac{1}{|\xi-1|^{2H-1}}-1\bigg|\int \frac{d\la_1}{\langle \la_1\rangle} \frac{1}{\langle \la_1+k_1^2\xi\rangle }\int \frac{d\la_2}{\langle \la_2\rangle^3}\frac{1}{ \langle \la_2+k_1^2\xi\rangle},
\end{align*}
{\cop and by applying Lemma \ref{lem:gtv}, we deduce, for $k_1\neq 0$,}
\begin{align*}
\int d\la_1\int \frac{d\la_2}{\langle \la_2\rangle^2}\big|R_{k_1}(\la_1,\la_2)\big|
&\lesssim \frac{1}{|k_1|^{4H-2\varepsilon}}\int_{\R}d\xi\, \frac{|\xi|^{1-\varepsilon}}{\langle \xi\rangle^{2-\frac{\varepsilon}{2}}}+\frac{1}{|k_1|^{4H-4}}\int_{\frac12}^{\infty }d\xi\, \bigg| \frac{1}{|\xi-1|^{2H-1}}-1\bigg| \frac{1}{\langle k_1^2\xi\rangle^{2-\varepsilon} }\\
&\lesssim \frac{1}{|k_1|^{4H-2\varepsilon}} \bigg[\int_{\R}d\xi\, \frac{|\xi|^{1-\varepsilon}}{\langle \xi\rangle^{2-\frac{\varepsilon}{2}}}+\int_{\frac12}^{\infty }\frac{d\xi}{|\xi|^{2-\varepsilon} }\, \bigg| \frac{1}{|\xi-1|^{2H-1}}-1\bigg| \bigg] \lesssim  \frac{1}{|k_1|^{4H-2\varepsilon}}.
\end{align*}

\end{proof}

\section{Proof of Proposition \ref{prop:p-p-q}}\label{sec:item-ii}

\subsection{Notation}

\

\smallskip

As a preliminary observation, note that due to \eqref{cova-frac}, the covariance of the kernel $\ck^{(n)}$ satisfies
\begin{align}
& \mathbb{E} \Big[ \big(\ck^{(n)}_{\chi}\big)_{kk_1}(\la,\la_1) \overline{\big(\ck^{(n)}_{\chi}\big)_{k'k'_1}(\la',\la'_1)}\Big]\nonumber\\
&=\1_{\{k,k'\neq 0\}}\1_{\{k_1\neq k\}}\1_{\{k_1'\neq k'\}}\nonumber\\
&\hspace{1.5cm}\int d\la_2 d\la_2' \, \ci_\chi(\la,\Omega_{k,k_1-k}+\la_1-\la_2)\overline{\ci_\chi(\la',\Omega_{k',k'_1-k'}+\la'_1-\la'_2)}\mathbb{E}\Big[ \overline{\cf(\<Psi>^{(n)}_{k_1-k})(\la_2)}\cf(\<Psi>^{(n)}_{k'_1-k'})(\la_2')\Big]\nonumber\\
&=\1_{\{ k_1-k=k_1'-k'\}}\mathbb{E} \Big[ \big(\ck^{(n)}_{\chi}\big)_{kk_1}(\la,\la_1) \overline{\big(\ck^{(n)}_{\chi}\big)_{k'k'_1}(\la',\la'_1)}\Big].\label{support-k}
\end{align}

\smallskip

For future reference, let us also rephrase the above expression (when $k=k',k_1=k_1'$) using \eqref{cova-frac}, which gives
\begin{align}
& \mathbb{E} \Big[ \big(\ck^{(n)}_{\chi}\big)_{kk_1}(\la,\la_1) \overline{\big(\ck^{(n)}_{\chi}\big)_{kk_1}(\la',\la'_1)}\Big]\nonumber\\
&=\1_{\{k\neq 0\}}\1_{\{k_1\neq k\}}\nonumber\\
&\hspace{1cm}\int d\la_2 d\la_2' \, \ci_\chi(\la,\Omega_{k,k_1-k}+\la_1-\la_2)\overline{\ci_\chi(\la',\Omega_{k,k_1-k}+\la'_1-\la'_2)}\mathbb{E}\Big[ \overline{\cf(\<Psi>^{(n)}_{k_1-k})(\la_2)}\cf(\<Psi>^{(n)}_{k_1-k})(\la_2')\Big]\nonumber\\
&=\1_{\{k\neq 0\}}\1_{\{k_1\neq k\}}\1_{\{\langle k_1-k\rangle \leq 2^n\}}\int_{\R} \frac{d\xi}{|\xi|^{2H-1}}\, A^\xi_{(k,\la)}(k_1,\la_1)\overline{A^\xi_{(k,\color{blue}\la'\color{black})}(k_1,\la_1')}\label{e-k-k-2-ter},
\end{align}
where we have set
\begin{align*}
A^\xi_{(k,\la)}(k_1,\la_1):=\int d\la_2 \, \ci_\chi(\la,\Omega_{k,k_1-k}+\la_1-\la_2)\overline{\ci_\chi(\la_2,-\xi-(k_1-k)^2)}.
\end{align*}

\subsection{Proof of Proposition \ref{prop:p-p-q}.}

\

We fix $b,c\in (0,1)$ such that $b-c\leq \frac14$ and consider the related quantity ${\cop \cpti^{(n)}_{c,b}}$ in \eqref{defi:cp-n}. Using Jensen's inequality, we immediately obtain that

\begin{align*}
&\mathbb{E}\Big[{\cop \cpti^{(n)}_{c,b}}\Big]\\
&=\sum_{k_1,k_1'\in \Z}\int_{\R^2} \frac{d\la_1}{\langle k_1 \rangle^{2c}+\langle \la_1\rangle^{2b}}\frac{d\la_1'}{\langle k'_1 \rangle^{2c}+\langle \la'_1\rangle^{2b}} \mathbb{E}\bigg[\bigg| \sum_{k\in \Z}\int_{\R} d\la\, \{ \langle k \rangle^{2c}+\langle \la\rangle^{2b}\} \,\big(\ck^{(n)}_{\chi}\big)_{kk_1}(\la,\la_1) \overline{\big(\ck^{(n)}_{\chi}\big)_{kk_1'}(\la,\la_1')}\bigg|^2\bigg]\\
&\geq \sum_{k_1,k_1'\in \Z}\int_{\R^2} \frac{d\la_1}{\langle k_1 \rangle^{2c}+\langle \la_1\rangle^{2b}}\frac{d\la_1'}{\langle k'_1 \rangle^{2c}+\langle \la'_1\rangle^{2b}} \bigg| \sum_{k\in \Z}\int_{\R} d\la\, \{ \langle k \rangle^{2c}+\langle \la\rangle^{2b}\} \,\mathbb{E}\bigg[ \big(\ck^{(n)}_{\chi}\big)_{kk_1}(\la,\la_1) \overline{\big(\ck^{(n)}_{\chi}\big)_{kk_1'}(\la,\la_1')}\bigg]\bigg|^2\\
&\hspace{3cm}=:\mathfrak{P}^{(n)}_{c,b}.
\end{align*}

Then, thanks to \eqref{support-k}, we can assert that
\begin{align*}
&\mathfrak{P}^{(n)}_{c,b}=\sum_{k_1\in \Z}\int_{\R^2} \frac{d\la_1}{\langle k_1 \rangle^{2c}+\langle \la_1\rangle^{2b}}\frac{d\la_1'}{\langle k_1 \rangle^{2c}+\langle \la'_1\rangle^{2b}} \bigg|\sum_{k\in \Z}\int_{\R} d\la\, \{ \langle k \rangle^{2c}+\langle \la\rangle^{2b}\} \,\mathbb{E}\bigg[ \big(\ck^{(n)}_{\chi}\big)_{kk_1}(\la,\la_1) \overline{\big(\ck^{(n)}_{\chi}\big)_{kk_1}(\la,\la_1')}\bigg]\bigg|^2\\
&=\sum_{k_1\in \Z}\int_{\R^2} \frac{d\la_1}{\langle k_1 \rangle^{2c}+\langle \la_1\rangle^{2b}}\frac{d\la_1'}{\langle k_1 \rangle^{2c}+\langle \la'_1\rangle^{2b}}\sum_{k,k'\in \Z}\int_{\R^2} d\la d\la'\, \{ \langle k \rangle^{2c}+\langle \la\rangle^{2b}\}\{ \langle k' \rangle^{2c}+\langle \la'\rangle^{2b}\} \\
&\hspace{2cm}\mathbb{E}\bigg[ \big(\ck^{(n)}_{\chi}\big)_{kk_1}(\la,\la_1) \overline{\big(\ck^{(n)}_{\chi}\big)_{kk_1}(\la,\la_1')}\bigg]\mathbb{E}\bigg[ \big(\ck^{(n)}_{\chi}\big)_{k'k_1}(\la',\la_1) \overline{\big(\ck^{(n)}_{\chi}\big)_{k'k_1}(\la',\la_1')}\bigg],
\end{align*}
which, combined with \eqref{e-k-k-2-ter}, yields that
\begin{align*}
\mathfrak{P}^{(n)}_{c,b}&=\sum_{k,k'\neq 0}\int_{\R^2} d\la d\la'\, \{ \langle k \rangle^{2c}+\langle \la\rangle^{2b}\}\{ \langle k' \rangle^{2c}+\langle \la'\rangle^{2b}\} \sum_{k_1\notin \{k,k'\}}\1_{\{\langle k_1-k\rangle \leq 2^n\}}\1_{\{\langle k_1-k'\rangle \leq 2^n\}} \\
&\hspace{2cm}\int_{\R} \frac{d\la_1}{\langle k_1 \rangle^{2c}+\langle \la_1\rangle^{2b}}\int_{\R}\frac{d\la_1'}{\langle k_1 \rangle^{2c}+\langle \la'_1\rangle^{2b}}\int_{\R} \frac{d\xi}{|\xi|^{2H-1}}\, A^\xi_{(k,\la)}(k_1,\la_1)\overline{A^\xi_{(k,\la)}(k_1,\la_1')}\\
&\hspace{4cm}\int_{\R} \frac{d\xi'}{|\xi'|^{2H-1}}\, \overline{A^{\xi'}_{(k',\la')}(k_1,\la_1)}A^{\xi'}_{(k',\la')}(k_1,\la_1')\\
&=\sum_{k,k'\neq 0}\int_{\R^2} d\la d\la'\, \{ \langle k \rangle^{2c}+\langle \la\rangle^{2b}\}\{ \langle k' \rangle^{2c}+\langle \la'\rangle^{2b}\} \sum_{k_1\notin \{k,k'\}}\1_{\{\langle k_1-k\rangle \leq 2^n\}}\1_{\{\langle k_1-k'\rangle \leq 2^n\}}\\
&\hspace{2cm}\int_{\R} \frac{d\xi}{|\xi|^{2H-1}}\int_{\R} \frac{d\xi'}{|\xi'|^{2H-1}}\bigg|\int_{\R} \frac{d\la_1}{\langle k_1 \rangle^{2c}+\langle \la_1\rangle^{2b}}A^\xi_{(k,\la)}(k_1,\la_1)\overline{A^{\xi'}_{(k',\la')}(k_1,\la_1)}\bigg|^2.
\end{align*}
As a result,
\begin{align*}
\mathfrak{P}^{(n)}_{c,b}
&\geq \sum_{k\neq 0}\langle k \rangle^{4c}\sum_{k_1\neq k}\1_{\{\langle k_1-k\rangle \leq 2^n\}} \\
&\hspace{1.5cm}\int_{\R^2} d\la d\la'\int_{\R} \frac{d\xi}{|\xi|^{2H-1}}\int_{\R} \frac{d\xi'}{|\xi'|^{2H-1}}\bigg|\int_{\R} \frac{d\la_1}{\langle k_1 \rangle^{2c}+\langle \la_1\rangle^{2b}}A^\xi_{(k,\la)}(k_1,\la_1)\overline{A^{\xi'}_{(k,\la')}(k_1,\la_1)}\bigg|^2\\
&\geq \sum_{k\neq 0}\langle k \rangle^{4c}\sum_{k_1\neq k}\1_{\{\langle k_1-k\rangle \leq 2^n\}} \int_{\R} \frac{d\la_1}{\langle k_1 \rangle^{2c}+\langle \la_1\rangle^{2b}}\int_{\R} \frac{d\la'_1}{\langle k_1 \rangle^{2c}+\langle \la'_1\rangle^{2b}} \\
&\hspace{1cm}\bigg[\int_{\R} \frac{d\xi}{|\xi|^{2H-1}}\int_{\R} d\la\, A^\xi_{(k,\la)}(k_1,\la_1)\overline{A^\xi_{(k,\la)}(k_1,\la'_1)}\bigg]\bigg[\int_{\R} \frac{d\xi'}{|\xi'|^{2H-1}} \int_{\R}d\la'\, \overline{A^{\xi'}_{(k,\la')}(k_1,\la_1)}A^{\xi'}_{(k,\la')}(k_1,\la'_1)\bigg]\\
&\geq \sum_{k\neq 0}\langle k \rangle^{4c}\sum_{k_1\neq k}\1_{\{\langle k_1-k\rangle \leq 2^n\}} \int_{\R} \frac{d\la_1}{\langle k_1 \rangle^{2c}+\langle \la_1\rangle^{2b}}\int_{\R} \frac{d\la'_1}{\langle k_1 \rangle^{2c}+\langle \la'_1\rangle^{2b}} \\
&\hspace{5cm}\bigg|\int_{\R} \frac{d\xi}{|\xi|^{2H-1}}\int_{\R} d\la\, A^\xi_{(k,\la)}(k_1,\la_1)\overline{A^\xi_{(k,\la)}(k_1,\la'_1)}\bigg|^2.
\end{align*}
In turn, this entails that
\begin{align}
&\mathfrak{P}^{(n)}_{c,b}
\geq \sum_{ 2\leq k\leq 2^n} \langle k \rangle^{4c}\int_{{\cop 2k}-\frac12}^{{\cop 2k}+\frac12} \frac{d\la_1}{\langle k-1\rangle^{2c}+\langle \la_1\rangle^{2b}}\int_{{\cop 2k}-\frac12}^{{\cop 2k}+\frac12} \frac{d\la_1'}{\langle k-1 \rangle^{2c}+\langle \la'_1\rangle^{2b}}\nonumber\\
&\hspace{5cm}\bigg|\int_{\R} \frac{d\xi}{|\xi|^{2H-1}}\int_{\R} d\la\, A^\xi_{(k,\la)}(k-1,\la_1)\overline{A^\xi_{(k,\la)}(k-1,\la'_1)}\bigg|^2\nonumber\\
&\gtrsim \sum_{ 2\leq k\leq 2^n} \frac{\langle k \rangle^{4c}}{\langle k\rangle^{4c}+\langle k\rangle^{4b}}\int_{{\cop 2k}-\frac12}^{{\cop 2k}+\frac12} d\la_1 \int_{{\cop 2k}-\frac12}^{{\cop 2k}+\frac12}d\la_1'\,\bigg|\int_{\R} \frac{d\xi}{|\xi|^{2H-1}}\int_{\R} d\la\, A^\xi_{(k,\la)}(k-1,\la_1)\overline{A^\xi_{(k,\la)}(k-1,\la'_1)}\bigg|^2\nonumber\\
&\gtrsim \sum_{ 2\leq k\leq 2^n} \frac{\langle k \rangle^{4c}}{\langle k\rangle^{4c}+\langle k\rangle^{4b}}\int_{-\frac12}^{\frac12} d\la_1 \int_{-\frac12}^{\frac12}d\la_1'\,\bigg|\int_{\R} \frac{d\xi}{|\xi|^{2H-1}}\int_{\R} d\la\, A^\xi_{(k,\la)}(k-1,\la_1 {\cop +2k})\overline{A^\xi_{(k,\la)}(k-1,\la'_1 {\cop + 2k})}\bigg|^2.\label{correct-i}
\end{align}
At this point, observe that due to $\Omega_{k,-1}={\cop -2k}$, one has in fact
\begin{align*}
A^\xi_{(k,\la)}(k-1,\la_1 {\cop +2k})&=\int d\la_2 \, \ci_\chi(\la,\Omega_{k,-1}+\la_1 {\cop +2k}-\la_2)\overline{\ci_\chi(\la_2,-\xi-1)}\\
&=\int d\la_2 \, \ci_\chi(\la,\la_1-\la_2)\overline{\ci_\chi(\la_2,-\xi-1)}=:B(\xi,\la,\la_1),
\end{align*}
and therefore, going back to \eqref{correct-i}, we deduce that
\begin{align*}
\mathfrak{P}^{(n)}_{c,b}
&\gtrsim \bigg(\sum_{ 2\leq k\leq 2^n} \frac{\langle k \rangle^{4c}}{\langle k\rangle^{4c}+\langle k\rangle^{4b}}\bigg)\bigg(\int_{-\frac12}^{\frac12} d\la_1 \int_{-\frac12}^{\frac12}d\la_1'\,\bigg|\int_{\R} \frac{d\xi}{|\xi|^{2H-1}}\int_{\R} d\la\, B(\xi,\la,\la_1)\overline{B(\xi,\la,\la_1')}\bigg|^2\bigg).
\end{align*}

Recall now that $b-c\leq \frac14$. Thus, in order to ensure that $\mathfrak{P}^{(n)}_{c,b} \stackrel{n\to\infty}{\longrightarrow}\infty$, we only need to guarantee that
\begin{equation}\label{last-gar}
\int_{-\frac12}^{\frac12} d\la_1 \int_{-\frac12}^{\frac12}d\la_1'\,\bigg|\int_{\R} \frac{d\xi}{|\xi|^{2H-1}}\int_{\R} d\la\, B(\xi,\la,\la_1)\overline{B(\xi,\la,\la_1')}\bigg|^2 >0.
\end{equation}
To this end, observe for instance that for $(\la_1,\la_1')=(0,0)$,
\begin{align*}
&\int_{\R} \frac{d\xi}{|\xi|^{2H-1}}\int_{\R} d\la\, B(\xi,\la,0)\overline{B(\xi,\la,0)}=\int_{\R} \frac{d\xi}{|\xi|^{2H-1}}\int_{\R} d\la\, \big|B(\xi,\la,0)\big|^2,
\end{align*}
and then, for $(\xi,\la)=(-1,0)$,
\begin{align*}
B(-1,0,0)&=\int d\la_2 \, \ci_\chi(0,-\la_2)\overline{\ci_\chi(\la_2,0)}\\
&=\int d\la_2\, \bigg(\int_{\R} dt\,  \chi(t)\int_0^t ds \, e^{\cop -\imath \la_2s}\chi(s)\bigg) \bigg(\int_{\R} dt' \, e^{\imath \la_2 t'} \chi(t')\int_0^{t'} ds' \,\chi(s')\bigg)\\
&={\cop \int_{\R} dt\,  \chi(t)\int_0^t ds \, \chi(s)\int_{\R} dt' \,  \chi(t')\int_0^{t'} ds' \,\chi(s')\bigg(\int d\la_2\, e^{ -\imath \la_2(s-t')}\bigg) }\\
&={\cop \int_{\R} dt\,  \chi(t)\int_0^t ds \, {\cop\chi(s)^2}\int_0^{{\cop s}} ds' \,\chi(s')  >0.}
\end{align*}

\smallskip

For obvious continuity reasons, we deduce that \eqref{last-gar} is satisfied, and accordingly one has
\begin{equation*}
\mathfrak{P}^{(n)}_{c,b} \stackrel{n\to\infty}{\longrightarrow}\infty,
\end{equation*}
as desired.

\bigskip

\appendix

\section{Proof of Lemma \ref{lem:regu-luxo}, item $(i)$}\label{sec:proof-regu-luxo}

We know that the linear solution $\Psi$ is explicitly given by the convolution formula
$$\Psi(t)=\int_0^t e^{-\imath {\cop \partial^2_x} (t-s)}\dot{B}(s),$$
and hence, using the notation introduced in \eqref{not:fouri-space},
\begin{align*}
\mathbb{E}\Big[ \big\| \Psi\big\|_{L^2([0,T]\times \mathbb{T})}^2\Big]&=\sum_k \int_0^T \mathbb{E}\Big[\big| \Psi_k(t)\big|^2\Big] \, dt\\
&=\sum_k \int_0^T \mathbb{E}\bigg[\bigg| \int_0^t e^{-\imath k^2 s}\dot{\beta}^{(k)}_{t-s} \, ds\bigg|^2\bigg] \, dt=\sum_k \int_0^T dt \int_0^t ds\int_0^t ds' e^{-\imath k^2 (s-s')}  \mathbb{E}\big[\dot{\beta}^{(k)}_{t-s} \dot{\beta}^{(k)}_{t-s'} \big].
\end{align*}
Recall that the fractional covariance is given by $ \mathbb{E}\big[\dot{\beta}^{(k)}_s \dot{\beta}^{(k)}_{s'} \big]=|s-s'|^{2H-2}$, which gives
\begin{align}
\mathbb{E}\Big[ \big\| \Psi\big\|_{L^2([0,T]\times \mathbb{T})}^2\Big]&=\sum_k \int_0^T dt\int_0^t ds\int_0^t ds' \, \frac{e^{-\imath k^2 (s-s')}}{|s-s'|^{2-2H}}=\sum_k \int_0^T dt\int_0^t ds\int_{-s}^{t-s} dr\, \frac{e^{\imath k^2 r}}{|r|^{2-2H}}\nonumber\\
&=\sum_k \int_0^T dt\int_0^t ds\int_{-s}^{0} dr\,  \frac{e^{\imath k^2 r}}{|r|^{2-2H}}+\sum_k \int_0^T dt\int_0^t ds\int_{0}^{t-s} dr\, \frac{e^{\imath k^2 r}}{|r|^{2-2H}}\nonumber\\
&=\sum_k \int_0^T dt\int_0^t ds\int_{0}^{s} dr\, \frac{e^{-\imath k^2 r}}{|r|^{2-2H}}+\sum_k \int_0^T dt\int_0^t ds\int_{0}^{s} dr \,\frac{e^{\imath k^2 r}}{|r|^{2-2H}}\nonumber\\
&=2\sum_k \int_0^T dt\int_0^t ds\int_{0}^{s} dr\, \frac{\cos(k^2r)}{r^{2-2H}}\nonumber\\
&=2\int_0^T dt\int_0^t ds\int_{0}^{s} \frac{dr}{r^{2-2H}}+2\sum_{k\neq 0}\frac{1}{|k|^{4H-2}} \int_0^T dt\int_0^t ds\int_{0}^{k^2 s} dr \, \frac{\cos(r)}{r^{2-2H}}\nonumber\\
&\lesssim 1+2\, \bigg|\sum_{k\neq 0}\frac{1}{|k|^{4H-2}} \int_0^T dt\int_0^t ds\int_{0}^{k^2 s} dr \, \frac{\cos(r)}{r^{2-2H}}\bigg|.\label{zero}
\end{align}
Now observe that
\begin{align}
&\bigg|\sum_{k\neq 0}\frac{1}{|k|^{4H-2}} \int_0^T dt\int_0^t ds\int_{0}^{k^2 s} dr\,  \frac{\cos(r)}{r^{2-2H}}\bigg|\nonumber\\
&\lesssim \sum_{k\neq 0:|k|\leq \frac{1}{\sqrt{T}}}\frac{1}{|k|^{4H-2}} \int_0^T dt\int_0^t ds\int_{0}^{1} \frac{dr}{r^{2-2H}}+\sum_{k\neq 0:|k|\geq \frac{1}{\sqrt{T}}}\frac{1}{|k|^{4H-2}} \int_0^T dt\int_0^T ds\, \bigg|\int_{0}^{k^2 s} dr \, \frac{\cos(r)}{r^{2-2H}}\bigg|\nonumber\\
&\lesssim \sum_{k\neq 0}\frac{1}{|k|^{4H-2}}+\sum_{k\neq 0:|k|\geq \frac{1}{\sqrt{T}}}\frac{1}{|k|^{4H-2}} \int_0^T dt \int_0^T ds\, \bigg|\int_{0}^{k^2 s} dr \, \frac{\cos(r)}{r^{2-2H}}\bigg|.\label{un}
\end{align}
Then
\begin{align}
&\sum_{k\neq 0:|k|\geq \frac{1}{\sqrt{T}}}\frac{1}{|k|^{4H-2}} \int_0^T dt\int_0^T ds\, \bigg|\int_{0}^{k^2 s} dr\,  \frac{\cos(r)}{r^{2-2H}}\bigg|\nonumber\\
&\lesssim \sum_{k\neq 0:|k|\geq \frac{1}{\sqrt{T}}}\frac{1}{|k|^{4H-2}} \int_0^T dt\int_0^{\frac{1}{k^2}} ds\int_{0}^{1} \frac{dr}{r^{2-2H}}+\sum_{k\neq 0:|k|\geq \frac{1}{\sqrt{T}}}\frac{1}{|k|^{4H-2}} \int_0^T dt\int_{\frac{1}{k^2}}^T ds\bigg[\int_{0}^{1} \frac{dr}{r^{2-2H}}+\bigg|\int_{1}^{k^2 s} dr\, \frac{\cos(r)}{r^{2-2H}}\bigg|\bigg]\nonumber\\
&\lesssim \sum_{k\neq 0}\frac{1}{|k|^{4H-2}} +\sum_{k\neq 0:|k|\geq \frac{1}{\sqrt{T}}}\frac{1}{|k|^{4H-2}} \int_0^T dt\int_{\frac{1}{k^2}}^T ds\,\bigg|\int_{1}^{k^2 s} dr\, \frac{\cos(r)}{r^{2-2H}}\bigg|\label{deux}
\end{align}
and for every $s>\frac{1}{k^2}$, we can use an integration-by-parts argument to derive that
\begin{align}
\bigg|\int_{1}^{k^2 s} dr\, \frac{\cos(r)}{r^{2-2H}}\bigg|&\lesssim 1+\int_{1}^{k^2 s}dr\, \frac{|\sin(r)|}{r^{3-2H}}\lesssim 1+\int_{1}^{\infty} \frac{dr}{r^{3-2H}}\lesssim 1.\label{trois}
\end{align}
Injecting \eqref{un}, \eqref{deux} and \eqref{trois} into \eqref{zero}, we can conclude that
\begin{align*}
\mathbb{E}\Big[ \big\| \Psi\big\|_{L^2([0,T]\times \mathbb{T})}^2\Big]&\lesssim 1+\sum_{k\neq 0}\frac{1}{|k|^{4H-2}} <\infty,
\end{align*}
due to the assumption $H>\frac34$.

\

\color{blue}

\section{Control of $\ci_\chi\cm(z,z)$}\label{sec:z-z-ztilde}

\begin{proposition}\label{prop:control-m-z-z-ztilde}
For all $\frac12 < b<\frac34$ and $0<c\leq b$, one has
\begin{equation}\label{control-m-z-z}
\big\|  \ci_\chi \cm(z,w)\big\|_{\widetilde{Z}^{c,b}}\lesssim \| z\|_{\widetilde{Z}^{c,b}}\| w\|_{\widetilde{Z}^{c,b}}.
\end{equation}
\end{proposition}

For the sake of clarity, we will treat the controls in $\widetilde{Z}^{0,b}$ and $\widetilde{Z}^{c,0}$ separately.

\subsection{Control in $\widetilde{Z}^{0,b}$}

\

\smallskip

\begin{proposition}\label{prop:m-z-z}
For all $\frac12 < b<\frac34$, one has
\begin{equation}\label{m-z-z}
\big\| \ci_\chi \cm(z,w)\big\|_{\widetilde{Z}^{0,b}}\lesssim \| z\|_{\widetilde{Z}^{0,b}}\| w\|_{\widetilde{Z}^{0,b}}.
\end{equation}

\end{proposition}

\begin{proof}

With the notation in \eqref{i-chi-fou} and \eqref{cm}, we can first write
\begin{align*}
&\cf\big( \ci_{\chi} \cm(z,w)_k\big)(\la)=\int_{\R} d\la' \, \ci_\chi(\la,\la') \cf(\cm(z,w)_k)(\la')\\
&=\sum_{k_1} \int_{\R} d\la' \, \ci_\chi(\la,\la') \int dt\, e^{-\imath \la' t}e^{\imath t \Omega_{k,k_1}} z_{k+k_1}(t) \overline{w_{k_1}(t)}\\
&=\sum_{k_1}\int d\la_1 \, \overline{\cf(w_{k_1})(\la_1)} \int d\la_2 \, \cf(z_{k+k_1})(\la_2) \int_{\R} d\la' \, \ci_\chi(\la,\la')\int dt\, e^{-\imath \la' t}e^{\imath t\Omega_{k,k_1}} e^{\imath t\la_2}e^{-\imath t \la_1} 
\end{align*}
and so
\begin{align}
&\cf\big( \ci_{\chi} \cm(z,w)_k\big)(\la)=\sum_{k_1}\int d\la_1 \, \overline{\cf(w_{k_1})(\la_1)} \int d\la_2 \, \cf(z_{k+k_1})(\la_2) \ci_\chi(\la,\Omega_{k,k_1}+\la_2-\la_1).\label{conven-exp}
\end{align}
Based on this expression, one has
\begin{align*}
&\big\| \ci_\chi \cm(z,w)\big\|_{\widetilde{Z}^{0,b}}^2\\
&=\sum_k\int d\la \, \langle \la\rangle^{2b}\bigg| \sum_{k_1}\int d\la_1 \, \overline{\cf(w_{k_1})(\la_1)} \int d\la_2 \, \cf(z_{k+k_1})(\la_2) \ci_\chi(\la,\Omega_{k,k_1}+\la_2-\la_1)\bigg|^2\\
&=\sum_{k\neq 0}\int d\la \, \langle \la\rangle^{2b}\bigg| \sum_{k_1}\int d\la_1 \, \overline{\cf(w_{k_1})(\la_1)} \int d\la_2 \, \cf(z_{k+k_1})(\la_2) \ci_\chi(\la,\Omega_{k,k_1}+\la_2-\la_1)\bigg|^2\\
&\hspace{1cm}+\int d\la \, \langle \la\rangle^{2b}\bigg| \sum_{k_1}\int d\la_1 \, \overline{\cf(w_{k_1})(\la_1)} \int d\la_2 \, \cf(z_{k_1})(\la_2) \ci_\chi(\la,\la_2-\la_1)\bigg|^2\ =:\ I +II.
\end{align*}

\

\noindent
\textit{Estimate for $I$.} One has
\begin{align*}
&I=\sum_{k\neq 0}\int d\la \, \langle \la\rangle^{2b}\bigg| \sum_{k_1}\int d\la_1 \int d\la_2 \, \overline{\cf(w_{k_1})(\la_1)}  \cf(z_{k+k_1})(\la_2) \ci_\chi(\la,\Omega_{k,k_1}+\la_2-\la_1)\bigg|^2\\
&=\sum_{k\neq 0}\bigg[ \sum_{k_1,k_1'}\int d\la_1 d\la'_1\int d\la_2d\la'_2\\
&\hspace{1cm}  \Big(\big[\langle\la_1\rangle^{b}\overline{\cf(w_{k_1})(\la_1)}\big]\big[ \langle\la_2\rangle^{b} \cf(z_{k+k_1})(\la_2)\big]\big[\langle\la'_1\rangle^{b}\cf(w_{k'_1})(\la'_1)\big]\big[ \langle\la'_2\rangle^{b} \overline{\cf(z_{k+k_1'})(\la'_2)}\big]\Big)\\
&\hspace{1.2cm}\bigg(\frac{1}{\langle\la_1\rangle^{b}} \frac{1}{\langle\la'_1\rangle^{b}} \frac{1}{\langle\la_2\rangle^{b}}\frac{1}{\langle\la'_2\rangle^{b}}\int d\la \, \langle \la\rangle^{2b}\ci_\chi(\la,\Omega_{k,k_1}+\la_2-\la_1) \overline{\ci_\chi(\la,\Omega_{k,k'_1}+\la'_2-\la'_1)}\bigg)\bigg],
\end{align*}
and so, using the Cauchy-Schwarz inequality with respect to $(k_1,k_1',\la_1,\la_1',\la_2,\la_2')$, we get
\begin{align*}
&I\leq \sum_{k\neq 0}\bigg( \sum_{k_1,k_1'}\int d\la_1 d\la'_1\int d\la_2d\la'_2\,  \Big|\big[\langle\la_1\rangle^{b}\cf(w_{k_1})(\la_1)\big]\big[ \langle\la_2\rangle^{b} \cf(z_{k+k_1})(\la_2)\big]\\
&\hspace{7cm} \big[\langle\la'_1\rangle^{b}\cf(w_{k'_1})(\la'_1)\big]\big[ \langle\la'_2\rangle^{b} \cf(z_{k+k_1'})(\la'_2)\big]\Big|^2\bigg)^{\frac12}\\
&\bigg(\sum_{k_1,k_1'}\int \frac{d\la_1}{\langle\la_1\rangle^{2b}} \frac{d\la'_1}{\langle\la'_1\rangle^{2b}}\int \frac{d\la_2}{\langle\la_2\rangle^{2b}}\frac{d\la'_2}{\langle\la'_2\rangle^{2b}}\, \bigg|\int d\la \, \langle \la\rangle^{2b}\ci_\chi(\la,\Omega_{k,k_1}+\la_2-\la_1) \overline{\ci_\chi(\la,\Omega_{k,k'_1}+\la'_2-\la'_1)}\bigg|^2\bigg)^{\frac12}.
\end{align*}
Setting 
$$\mathbf{C}:=\sup_{k\neq 0} \sup_{\la_1,\la_2\in \R} \sup_{\la'_1,\la'_2\in \R}\sum_{k_1,k_1'} \bigg(\int d\la \, \langle \la\rangle^{2b}|\ci_\chi(\la,\Omega_{k,k_1}+\la_2-\la_1)| |\ci_\chi(\la,\Omega_{k,k'_1}+\la'_2-\la'_1)|\bigg)^2$$
and using the fact that $b>\frac12$, we obtain that
\begin{align*}
I&\leq \mathbf{C}^{\frac12}\cdot  \sum_{k}\bigg( \sum_{k_1}\int d\la_1\, \langle\la_1\rangle^{2b}\big|\cf(w_{k_1})(\la_1)\big|^2 \int d\la_2\,   \langle\la_2\rangle^{2b} \big|\cf(z_{k+k_1})(\la_2)\big|^2\bigg) \leq \mathbf{C}^{\frac12} \, \|z\|_{\widetilde{Z}^{0,b}}^2\|w\|_{\widetilde{Z}^{0,b}}^2.
\end{align*}
Therefore it remains us to prove that $\mathbf{C}<\infty$.

\smallskip

To this end, let us first use \eqref{ker-i} to write, for all $k\neq 0$ and $\la_1,\la_2\in \R$,
\begin{align}
&\sum_{k_1,k_1'} \bigg(\int d\la \, \langle \la\rangle^{2b}|\ci_\chi(\la,\Omega_{k,k_1}+\la_2-\la_1)| |\ci_\chi(\la,\Omega_{k,k'_1}+\la'_2-\la'_1)|\bigg)^2\nonumber\\
&\lesssim \sum_{k_1,k_1'} \bigg(\int \frac{d\la}{\langle \la\rangle^{2-2b}}\frac{1}{\langle \la-\Omega_{k,k_1}-\la_2+\la_1\rangle}\frac{1}{\langle \la-\Omega_{k,k'_1}-\la'_2+\la'_1\rangle}\bigg)^2\nonumber\\
&\lesssim \int \frac{d\la}{\langle \la\rangle^{2-2b}}\int \frac{d\la'}{\langle \la'\rangle^{2-2b}}\bigg[\sum_{k_1}\frac{1}{\langle \la-\Omega_{k,k_1}-\la_2+\la_1\rangle}\frac{1}{\langle \la'-\Omega_{k,k_1}-\la_2+\la_1\rangle}\bigg]\nonumber\\
&\hspace{5cm}\bigg[\sum_{k_1'}\frac{1}{\langle \la-\Omega_{k,k'_1}-\la'_2+\la'_1\rangle}\frac{1}{\langle \la'-\Omega_{k,k'_1}-\la'_2+\la'_1\rangle}\bigg].\label{interm-product}
\end{align}
Then recall that $\Omega_{k,k_1}=2kk_1$, and so
\begin{align*}
&\sum_{k_1}\frac{1}{\langle \la-\Omega_{k,k_1}-\la_2+\la_1\rangle}\frac{1}{\langle \la'-\Omega_{k,k_1}-\la_2+\la_1\rangle}\\
&=\sum_{k_1}\frac{1}{\langle \la-2kk_1-\la_2+\la_1\rangle}\frac{1}{\langle \la'-2kk_1-\la_2+\la_1\rangle}\leq \sum_{k_1}\frac{1}{\langle \la+k_1-\la_2+\la_1\rangle}\frac{1}{\langle \la'+k_1-\la_2+\la_1\rangle},
\end{align*}
where we have used the fact that $k\neq 0$ to derive the last inequality. We are here in a position to apply Lemma \ref{lem:gtv} and assert that for every $0<\varepsilon<1$,
$$\sum_{k_1}\frac{1}{\langle \la-\Omega_{k,k_1}-\la_2+\la_1\rangle}\frac{1}{\langle \la'-\Omega_{k,k_1}-\la_2+\la_1\rangle} \lesssim \frac{1}{\langle \la-\la'\rangle^{1-\varepsilon}},$$
where the proportional constant does not depend on $k$, $\la_1$ or $\la_2$. Injecting this uniform bound into \eqref{interm-product} now yields
\begin{align*}
&\sum_{k_1,k_1'} \bigg(\int d\la \, \langle \la\rangle^{2b}|\ci_\chi(\la,\Omega_{k,k_1}+\la_2-\la_1)| |\ci_\chi(\la,\Omega_{k,k'_1}+\la'_2-\la'_1)|\bigg)^2\lesssim \int \frac{d\la}{\langle \la\rangle^{2-2b}}\int \frac{d\la'}{\langle \la'\rangle^{2-2b}}\frac{1}{\langle \la-\la'\rangle^{2-2\varepsilon}},
\end{align*}
uniformly over $k\neq 0$ and $\la_1,\la_2,\la'_1,\la'_2\in \R$.

\smallskip

Finally, we can again rely on Lemma \ref{lem:gtv} to deduce that
\begin{align*}
&\mathbf{C}\lesssim \int \frac{d\la}{\langle \la\rangle^{4-4b}},
\end{align*}
and since $b<\frac34$, this achieves to prove that $\mathbf{C}<\infty$.

\

\noindent
\textit{Estimate for $II$.} Using \eqref{ker-i}, we can write
\begin{align*}
&II=\int d\la \, \langle \la\rangle^{2b}\bigg| \sum_{k_1}\int d\la_1 \, \overline{\cf(w_{k_1})(\la_1)} \int d\la_2 \, \cf(z_{k_1})(\la_2) \ci_\chi(\la,\la_2-\la_1)\bigg|^2\\
&\lesssim \int \frac{d\la}{\langle \la\rangle^{2-2b}}\bigg[ \sum_{k_1}\int d\la_1\int d\la_2 \bigg( \big[ \langle \la_1\rangle^b|\cf(w_{k_1})(\la_1)|\big]\big[\langle \la_2\rangle^b | \cf(z_{k_1})(\la_2)|\big]\bigg) \bigg( \frac{1}{\langle \la_1\rangle^b} \frac{1}{\langle \la_2\rangle^b}\frac{1}{\langle \la-(\la_1-\la_2)\rangle}\bigg)\bigg]^2,
\end{align*}
and then, by the Cauchy-Schwarz inequality (with respect to $(\la_1,\la_2)$), we obtain
\begin{align*}
II&\lesssim \bigg(\int \frac{d\la}{\langle \la\rangle^{2-2b}}\frac{d\la'_1}{\langle \la'_1\rangle^{2b}} \frac{d\la'_2}{\langle \la'_2\rangle^{2b}}\frac{1}{\langle \la-(\la'_1-\la'_2)\rangle^2}\bigg)\\
&\hspace{3cm}\bigg[ \sum_{k_1}\bigg(\int d\la_1 \, \langle \la_1\rangle^{2b}|\cf(w_{k_1})(\la_1)|^2\bigg)^{\frac12}\bigg(\int d\la_2 \, \langle \la_2\rangle^{2b}|\cf(z_{k_1})(\la_2)|^2\bigg)^{\frac12} \bigg]^2\\
&\lesssim \bigg(\int \frac{d\la}{\langle \la\rangle^{2-2b}}\frac{d\la'_1}{\langle \la'_1\rangle^{2b}} \frac{d\la'_2}{\langle \la'_2\rangle^{2b}}\frac{1}{\langle \la-(\la'_1-\la'_2)\rangle^2}\bigg) \, \|z\|_{\widetilde{Z}^{0,b}}^2\|w\|_{\widetilde{Z}^{0,b}}^2.
\end{align*}
Applying  Lemma \ref{lem:gtv} twice gives us successively (recall that $1<2b<\frac32$)
\begin{align*}
&\int \frac{d\la}{\langle \la\rangle^{2-2b}}\frac{d\la'_1}{\langle \la'_1\rangle^{2b}} \frac{d\la'_2}{\langle \la'_2\rangle^{2b}}\frac{1}{\langle \la-(\la'_1-\la'_2)\rangle^2}\lesssim \int \frac{d\la}{\langle \la\rangle^{2-2b}}\frac{d\la'_1}{\langle \la'_1\rangle^{2b}} \frac{1}{\langle \la-\la'_1\rangle^{2b}}\lesssim \int \frac{d\la}{\langle \la\rangle^{2}} <\infty,
\end{align*}
which achieves to prove that $\mathbf{C}<\infty$.
\end{proof}

\smallskip

\subsection{Control in $\widetilde{Z}^{c,0}$}

\begin{proposition}\label{prop:z-z-c}
For all $\frac12<b<1$ and $0\leq c\leq b$, one has
\begin{equation}
\big\|\ci_\chi \cm(z,w)\big\|_{\widetilde{Z}^{c,0}}\lesssim  \|z\|_{\widetilde{Z}^{c,b}}\|w\|_{\widetilde{Z}^{c,b}}.
\end{equation}

\end{proposition}

\begin{proof}
Using the expression in \eqref{conven-exp}, one has
\begin{align}
& \big\|\ci_\chi \cm(z,w)\big\|_{\widetilde{Z}^{c,0}}^2\nonumber\\
&\leq \sum_k \langle k\rangle^{2c}  \sum_{k_1,k_1'}\int d\la_1 d\la_1'\int d\la_2 d\la_2'\,  |\cf\big(w_{k_1}\big)(\la_1)| |\cf\big(z_{k+k_1}\big)(\la_2)| |\cf\big(w_{k'_1}\big)(\la'_1)| |\cf\big(z_{k+k_1'}\big)(\la'_2)| \nonumber\\
&\hspace{3cm}\int d\la\,  \big|\ci_{\chi}(\la,\Omega_{k,k_1}+\la_2-\la_1)\big| \big|\ci_{\chi}(\la,\Omega_{k,k'_1}+\la'_2-\la'_1)\big| .\label{fir-prod-x-c}
\end{align}
Then, by \eqref{ker-i},
\begin{align*}
&\int d\la\,  \big|\ci_{\chi}(\la,\Omega_{k,k_1}+\la_2-\la_1)\big| \big|\ci_{\chi}(\la,\Omega_{k,k'_1}+\la'_2-\la'_1)\big|  \\
&\lesssim  \int \frac{d\la }{\langle \la\rangle^{2}} \frac{1}{\langle \la-\Omega_{k,k_1}-\la_2+\la_1\rangle}\frac{1}{\langle \la-\Omega_{k,k'_1}-\la'_2+\la'_1\rangle} \\
&\lesssim \bigg(\int \frac{d\la }{\langle \la\rangle^2} \frac{1}{\langle \la-\Omega_{k,k_1}-\la_2+\la_1\rangle^2}\bigg)^{\frac12}\bigg(\int \frac{ d\la'}{\langle \la'\rangle^2}\frac{1}{\langle \la'-\Omega_{k,k'_1}-\la'_2+\la'_1\rangle^2}\bigg)^{\frac12}\\
& \lesssim \frac{1}{\langle \Omega_{k,k_1}+\la_2-\la_1\rangle}\frac{1}{\langle \Omega_{k,k'_1}+\la'_2-\la'_1\rangle},
\end{align*}
due to Lemma \ref{lem:gtv}. Injecting this estimate into \eqref{fir-prod-x-c}, we obtain that
\begin{align}
&\big\|  \ci_\chi \cm(z,w)  \big\|_{\widetilde{Z}^{c,0}}^2\lesssim \sum_k \langle k\rangle^{2c} \bigg[ \sum_{k_1}\int d\la_1 \int d\la_2 \,  |\cf\big(w_{k_1}\big)(\la_1)| |\cf\big(z_{k+k_1}\big)(\la_2)|\frac{1}{\langle \Omega_{k,k_1}+\la_2-\la_1\rangle}\bigg]^2.
\end{align}
Let us bound the latter sum as follows:
\begin{align*}
&\sum_k \langle k\rangle^{2c} \bigg[ \sum_{k_1}\int d\la_1 \int d\la_2 \,  |\cf\big(w_{k_1}\big)(\la_1)| |\cf\big(z_{k+k_1}\big)(\la_2)|\frac{1}{\langle \Omega_{k,k_1}+\la_2-\la_1\rangle}\bigg]^2\lesssim I+II+III,
\end{align*}
with
$$I:=\sum_{k\neq 0} \langle k\rangle^{2c} \bigg[ \sum_{k_1\neq 0}\int d\la_1 \int d\la_2 \,  |\cf\big(w_{k_1}\big)(\la_1)| |\cf\big(z_{k+k_1}\big)(\la_2)|\frac{1}{\langle \Omega_{k,k_1}+\la_2-\la_1\rangle}\bigg]^2,$$
$$II:=\sum_{k\neq 0} \langle k\rangle^{2c} \bigg[\int d\la_1 \int d\la_2 \,  |\cf\big(w_{0}\big)(\la_1)| |\cf\big(z_{k}\big)(\la_2)|\frac{1}{\langle \la_2-\la_1\rangle}\bigg]^2,$$
and
$$III:=\bigg[ \sum_{k_1}\int d\la_1 \int d\la_2 \,  |\cf\big(w_{k_1}\big)(\la_1)| |\cf\big(z_{k_1}\big)(\la_2)|\frac{1}{\langle \la_2-\la_1\rangle}\bigg]^2.$$

\

\noindent
\textit{Control of $I$.} For this term, using Lemma \ref{lem:gtv} twice, we have
\begin{align*}
&I=\sum_{k\neq 0} \langle k\rangle^{2c}\bigg[ \sum_{k_1\neq 0}\int d\la_1 d\la_2 \, \Big[\langle \la_1\rangle^b\langle \la_2\rangle^b |\cf\big(w_{k_1}\big)(\la_1)| |\cf\big(z_{k+k_1}\big)(\la_2)|\Big]\bigg[\frac{1}{\langle \la_1\rangle^b\langle \la_2\rangle^b}\frac{1}{\langle \Omega_{k,k_1}+\la_2-\la_1\rangle}\bigg]\bigg]^2\\
&\lesssim \sum_{k\neq 0} \langle k\rangle^{2c}\bigg( \sum_{k_1 }\int d\la_1 d\la_2 \, \langle \la_1\rangle^{2b}\langle \la_2\rangle^{2b} |\cf\big(w_{k_1}\big)(\la_1)|^2 |\cf\big(z_{k+k_1}\big)(\la_2)|^2\bigg)\\
&\hspace{8cm}\bigg(\sum_{k_1'\neq 0}\int \frac{d\la_1 d\la_2}{\langle \la_1\rangle^{2b}\langle \la_2\rangle^{2b}}\frac{1}{\langle \Omega_{k,k'_1}+\la_2-\la_1\rangle^{2}}\bigg)\\
&\lesssim \sum_{k\neq 0} \langle k\rangle^{2c}\bigg( \sum_{k_1 }\int d\la_1 d\la_2 \, \langle \la_1\rangle^{2b}\langle \la_2\rangle^{2b} |\cf\big(w_{k_1}\big)(\la_1)|^2 |\cf\big(z_{k+k_1}\big)(\la_2)|^2\bigg)\bigg(\sum_{k_1'\neq 0}\frac{1}{\langle \Omega_{k,k'_1}\rangle^{2b}}\bigg)\\
&\lesssim \sum_{k\neq 0} \langle k\rangle^{2(c-b)} \sum_{k_1 }\int d\la_1 d\la_2 \, \langle \la_1\rangle^{2b}\langle \la_2\rangle^{2b} |\cf\big(w_{k_1}\big)(\la_1)|^2 |\cf\big(z_{k+k_1}\big)(\la_2)|^2\\
&\lesssim \sum_{k} \sum_{k_1 }\int d\la_1 d\la_2 \, \langle \la_1\rangle^{2b}\langle \la_2\rangle^{2b} |\cf\big(w_{k_1}\big)(\la_1)|^2 |\cf\big(z_{k+k_1}\big)(\la_2)|^2 \ \lesssim \ \big\|z\big\|_{\widetilde{Z}^{0,b}}^2\big\|w\big\|_{\widetilde{Z}^{0,b}}^2,
\end{align*}
where we have used the assumption $c-b\leq 0$.

\

\noindent
\textit{Control of $II$.} One has in this case
\begin{align*}
II&=\sum_{k\neq 0} \langle k\rangle^{2c} \bigg[\int d\la_2 \,  |\cf\big(z_{k}\big)(\la_2)| \bigg(\int d\la_1 \, |\cf\big(w_{0}\big)(\la_1)|\frac{1}{\langle \la_2-\la_1\rangle}\bigg)\bigg]^2\\
&\lesssim \sum_{k\neq 0} \langle k\rangle^{2c} \bigg(\int d\la'_2 \,  |\cf\big(z_{k}\big)(\la'_2)|^2\bigg) \bigg(\int d\la_2\, \bigg(\int d\la_1 \, |\cf\big(w_{0}\big)(\la_1)|\frac{1}{\langle \la_2-\la_1\rangle}\bigg)^2\bigg)\\
&\lesssim \big\|z\big\|_{\widetilde{Z}^{c,0}}^2 \bigg[\int d\la_2\, \bigg(\int d\la_1 \, \big[\langle \la_1\rangle^b|\cf\big(w_{0}\big)(\la_1)|\big]\bigg[\frac{1}{\langle \la_1\rangle^b\langle \la_2-\la_1\rangle}\bigg]\bigg)^2\bigg]\\
&\lesssim \big\|z\big\|_{\widetilde{Z}^{c,0}}^2 \big\|w\big\|_{\widetilde{Z}^{0,b}}^2 \int\frac{d\la_2 d\la_1}{\langle \la_1\rangle^{2b}\langle \la_2-\la_1\rangle^{2}}\  \lesssim \ \big\|z\big\|_{\widetilde{Z}^{c,0}}^2 \big\|w\big\|_{\widetilde{Z}^{0,b}}^2 .
\end{align*}

\

\noindent
\textit{Control of $III$.} Finally, for this term, we have 
\begin{align*}
III&=\bigg[ \int d\la_1  d\la_2 \, \bigg(\sum_{k_1} \langle\la_1\rangle^b |\cf\big(w_{k_1}\big)(\la_1)|\cdot \langle \la_2\rangle^b|\cf\big(z_{k_1}\big)(\la_2)|\bigg)\bigg(\frac{1}{\langle\la_1\rangle^b \langle \la_2\rangle^b\langle \la_2-\la_1\rangle}\bigg)\bigg]^2\\
&\leq  \bigg[ \int d\la_1  d\la_2 \, \bigg(\sum_{k_1} \langle\la_1\rangle^b |\cf\big(w_{k_1}\big)(\la_1)|\cdot \langle \la_2\rangle^b |\cf\big(z_{k_1}\big)(\la_2)|\bigg)^2\bigg]\bigg[\int\frac{d\la_1d\la_2}{\langle\la_1\rangle^{2b} \langle \la_2\rangle^{2b}\langle \la_2-\la_1\rangle^{2}}\bigg]\\
&\lesssim \bigg[ \int d\la_1  d\la_2 \, \bigg(\sum_{k_1} \langle\la_1\rangle^{2b} |\cf\big(w_{k_1}\big)(\la_1)|^2\bigg)\bigg(\sum_{k'_1} \langle\la_2\rangle^{2b} |\cf\big(z_{k'_1}\big)(\la_2)|^2\bigg)\bigg]\ \lesssim \ \big\|z\big\|_{\widetilde{Z}^{0,b}}^2\big\|w\big\|_{\widetilde{Z}^{0,b}}^2.
\end{align*}

\end{proof}

\color{black}

\section{Estimate of the product-operator norm}\label{sec:discuss-p-n}

We gather here a few elements to justify our consideration of the quantity ${\cop \cpti^{(n)}_{c,b}}$ in \eqref{defi:cp-n} as an estimate of the operator norm $\big\|\cl^{\sharp,(n)}\big\|_{{\cop \widetilde{Z}^{c,b}} \to {\cop \widetilde{Z}^{c,b}}}$.

\subsection{General operator estimates in Besov spaces}\label{subsec:gene-ope-esti}

Given a general kernel $K\in L^2(\R^d \times \R^d)$, define the operator $\cl_K: L^2(\R^d)\to \cl^2(\R^d)$ by
\begin{equation}\label{kernel-pattern}
\cf(\cl_K f)(\la)=\int_{\R^d} d\la_1 \, K(\la,\la_1)(\cf f)(\la_1), \quad \la\in \R^d.
\end{equation}
For each $\underline{b}=(b_1,\ldots,b_d)$, consider the anisotropic Besov space $\ch^{\underline{b}}$ on $\R^d$ related to the Fourier multiplier
$$\langle \la\rangle_{\underline{b}}:=\bigg(\sum_{i=1}^d |\la_i|^{2b_i}\bigg)^{\frac12}.$$


\

If $K$ has no specific a priori structure, then we are essentially confined to very general transformations toward the operator norm of $\cl_K$ (from $\ch^{\underline{b}}$ to $\ch^{\underline{b}'}$): namely,
\begin{align}
\big\| \cl_K f\big\|_{\ch^{\underline{b}'}}^2&=\int d\la \, \langle \la\rangle_{\underline{b}'}^2  \bigg|\int d\la_1  \, K(\la,\la_1)(\cf f)(\la_1)\bigg|^2\nonumber\\
&=\int d\la \, \langle \la\rangle_{\underline{b}'}^2  \bigg|\int d\la_1  \, \frac{1}{\langle \la_1\rangle_{\underline{b}}}K(\la,\la_1)\big(\langle \la_1\rangle_{\underline{b}}(\cf f)(\la_1)\big)\bigg|^2\label{cs-basic}\\
&= \int d\la_1 d\la_1' \, \bigg[ \frac{1}{\langle \la_1\rangle_{\underline{b}}}\frac{1}{\langle \la_1'\rangle_{\underline{b}}}\int d\la \, \langle \la\rangle_{\underline{b}'}^2 \, K(\la,\la_1)\overline{K(\la,\la_1')}\bigg] \Big[\big( \langle \la_1\rangle_{\underline{b}} (\cf f)(\la_1)\big) \big( \langle\la_1'\rangle_{\underline{b}}\overline{(\cf f)(\la_1')}\big)\Big]\nonumber\\
&\leq \bigg( \int \frac{d\la_1}{\langle \la_1\rangle_{\underline{b}}^{2}}\frac{d\la_1'}{\langle \la_1'\rangle_{\underline{b}}^{2}} \,\bigg|\int d\la \, \langle \la\rangle_{\underline{b}'}^2\, K(\la,\la_1)\overline{K(\la,\la_1')}\bigg|^2\bigg)^{\frac12} \, \big\|f\big\|_{\ch^{\underline{b}}}^2,\nonumber
\end{align}
which provides us with the general bound
\begin{equation}\label{estim-cal-p}
\big\|\cl_K\big\|_{\ch^{\underline{b}}\to \ch^{\underline{b}'}} \leq \cp_{\underline{b},\underline{b}'}^{\frac14}, \quad \text{where} \ \cp_{\underline{b},\underline{b}'}:= \int \frac{d\la_1}{\langle \la_1\rangle_{\underline{b}}^{2}}\frac{d\la_1'}{\langle \la_1'\rangle_{\underline{b}}^{2}} \,\bigg|\int d\la \, \langle \la\rangle_{\underline{b}'}^2\, K(\la,\la_1)\overline{K(\la,\la_1')}\bigg|^2.
\end{equation}

\

\begin{remark}
The operator norm $\big\|\cl_K\big\|_{\ch^{\underline{b}}\to \ch^{\underline{b}'}}$ can be more basically - but less sharply - bounded by the Hilbert-Schmidt norm of $\cl_K$, that is the quantity 
$$\cq_{\underline{b},\underline{b}'}^{\frac12}:=\bigg(\int d\la \, \langle \la\rangle_{\underline{b}'}^2\int \frac{d\la_1}{\langle \la_1\rangle_{\underline{b}}^{2}} \big|K(\la,\la_1)\big|^2\bigg)^{\frac12},$$
as it can be seen from a straightforward application of Cauchy-Schwarz inequality in \eqref{cs-basic}.

\smallskip

A classical example where the three quantities $\big\|\cl_K\big\|_{\ch^{\underline{b}}\to \ch^{\underline{b}'}}$, $\cp_{\underline{b},\underline{b}'}^{\frac14}$ and $\cq_{\underline{b},\underline{b}'}^{\frac12}$ can be compared is given by the elementary kernel $K(t,s):=\1_{\{0\leq s\leq t\leq 1\}}$ in $L^2(\R)$, or more exactly by the integration operator
$$\cl f(t):=\1_{[0,1]}(t)\int_0^t f(s) \, ds.$$
In this case, it can be checked that
$$\big\|\cl\big\|_{{L^2}\to L^2}=\frac{2}{\pi}\approx 0.637, \quad \cp_{0,0}^{\frac14}=\Big(\frac{1}{6}\Big)^{\frac14} \approx 0.639,  \quad \text{and} \quad  \cq_{0,0}^{\frac12}=\frac{1}{\sqrt{2}}\approx 0.707.$$
\end{remark}

\

\subsection{Example: Young integration as random operator}\label{subsec:young-op}

As an additional justification of our focus on ${\cop \cpti^{(n)}_{c,b}}$, let us consider the (well-known) case of the fractional integration operator.

\smallskip

Thus, consider a fractional noise $\dot{B}$ on $\R$ with index $H\in (0,1)$, as well as a smooth approximation $\dot{B}^{(n)}$ given for instance by
\begin{equation}\label{appro-wn}
\dot{B}^{(n)}(t):=\int_{\{|\xi|\leq 2^n\}} {\cop \frac{\xi}{|\xi|^{H+\frac12}}}e^{-\imath t\xi}\, \widehat{W}(d\xi) ,
\end{equation}
where $\widehat{W}$ stands for the Fourier transform of a Wiener process $W$ on $\R$.

\smallskip

Then define the (local) fractional integration operator in a standard way: for any regular $z:\R\to \R$ and $t\in \R$, 
\begin{equation}
\big(\cl^{(n)}z\big)(t):=\chi(t)\int_0^t ds \, \chi(s) z(s) \dot{B}^{(n)}(s),
\end{equation}
where $\chi: \R \to \R$ stands for localizing cut-off function, that is smooth, positive and compactly-supported.

\smallskip

In Fourier mode, the operator $\cl^{(n)}$ can easily be written along the pattern of \eqref{kernel-pattern}, that is as
\begin{align*}
\cf\big( \cl^{(n)}z\big)(\la)=\int d\la_1 \, K^{(n)}(\la ,\la_1) \, (\cf z)(\la_1),
\end{align*}
{\cop with $K^{(n)}$ explicitly given by
\begin{equation}\label{kernel-integration-simple}
K^{(n)}(\la,\la_1):=\int_{\R} dt \, e^{-\imath \la t} \chi(t)\int_0^t ds \,\chi(s) e^{\imath \la_1 s} \dot{B}^{(n)}(s).
\end{equation}}
Setting
$$\big\|z\big\|_{X^b}^2:=\int d\la \, \langle \la\rangle^{2b} \big|\cf(z)\big|^2$$
and applying \eqref{estim-cal-p}, we deduce that
\begin{equation}\label{defi:p-n-y}
\big\| \cl^{(n)}\big\|_{X^b \to X^b} \lesssim \big(\cp_b^{(n)})^{\frac14} ,\quad \text{with} \ \ \cp_b^{(n)}:=\int \frac{d\la_1}{\langle \la_1\rangle^{2b}}\frac{d\la_1'}{\langle \la_1'\rangle^{2b}} \,\bigg|\int d\la \, \langle \la\rangle^{2b}\, K^{(n)}(\la,\la_1)\overline{K^{(n)}(\la,\la_1')}\bigg|^2.
\end{equation}

Focusing exclusively on this explicit quantity $\cp_b^{(n)}$ allows us to recover the classical \enquote{Young} dichotomy of fractional integration theory.

\begin{proposition}\label{prop:young-p}
In the above setting, the following picture holds true:

\smallskip

\noindent
$(i)$ If $H>\frac12$, then for every $b\in (\frac12,H)$, one has
$$\sup_{n} \, \mathbb{E}\Big[\cp_b^{(n)}\Big] < \infty. $$

\smallskip

\noindent
$(ii)$ If $H= \frac12$, then for every $b\in \R$, one has
$$\mathbb{E}\Big[\cp_b^{(n)}\Big] \stackrel{n\to \infty}{\longrightarrow} \infty.$$
\end{proposition}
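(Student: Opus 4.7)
My plan is to reduce both items to explicit deterministic integrals via Wick's formula, and then to bound these integrals using kernel estimates in the spirit of \eqref{ker-i}. Computing $K^{(n)}$ directly from the definition of $\cl^{(n)}$ and interpreting \eqref{appro-wn} as the natural spectral truncation of fractional noise (so that the weight $|\xi|^{(1-2H)/2}$ is implicit through the normalization of $\widehat{W}(d\xi)$), one gets
\begin{equation*}
K^{(n)}(\la,\la_1)\;=\;c\int_{|\xi|\leq 2^n}\cj_\chi(\la,\la_1-\xi)\,|\xi|^{(1-2H)/2}\,\widehat{W}(d\xi),
\end{equation*}
where $\cj_\chi(\la,\nu):=\int dt\,e^{-\imath \la t}\chi(t)\int_0^t\chi(s)e^{\imath\nu s}\,ds$ obeys $|\cj_\chi(\la,\nu)|\lesssim \langle\la\rangle^{-1}\langle\la-\nu\rangle^{-1}$ by the same integration-by-parts argument as for $\ci_\chi$. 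Setting $N^{(n)}(\la_1,\la_1'):=\int\langle\la\rangle^{2b}K^{(n)}(\la,\la_1)\overline{K^{(n)}(\la,\la_1')}\,d\la$, so that $\cp^{(n)}_b=\int\frac{d\la_1 d\la_1'}{\langle\la_1\rangle^{2b}\langle\la_1'\rangle^{2b}}|N^{(n)}|^2$, Wick's formula yields $\mathbb{E}[|N^{(n)}|^2]=|\mathbb{E} N^{(n)}|^2+\mathrm{Var}(N^{(n)})$, where $\mathbb{E} N^{(n)}$ is a single $\xi$-integral against $|\xi|^{1-2H}$ and $\mathrm{Var}(N^{(n)})$ a double integral against two such factors.

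For item $(i)$, I would treat the two contributions separately. Inserting the kernel bound on $\cj_\chi$ and collapsing the $\la$- and cross $\xi$-integrations via the convolution estimate \cite[Lemma 3.8]{DFT}, both expressions reduce (after absorbing the Bessel weights $\langle\la_1\rangle^{-2b}\langle\la_1'\rangle^{-2b}$ into the outer integrals) to spectral integrals of the form $\int d\xi\,|\xi|^{1-2H}\langle\xi\rangle^{2b-2-\varepsilon}$. Integrability at infinity is exactly $(1-2H)+(2b-2-\varepsilon)<-1$, i.e.\ $b<H$, which matches the claimed range.

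For item $(ii)$, with $H=\tfrac12$ the spectral weight is $1$ and the $\xi$-integration provides no decay. When $b\geq \tfrac12$ the divergence is already visible at the level of $|\mathbb{E} N^{(n)}|^2$: on a diagonal strip of $(\la_1,\la_1')$ of positive measure, $\mathbb{E} N^{(n)}(\la_1,\la_1)$ contains the integral $\int_{|\xi|\leq 2^n}\int\langle\la\rangle^{2b}|\cj_\chi(\la,-\xi)|^2\,d\la\,d\xi$, which scales like a positive power of $2^n$ (and logarithmically at the threshold $b=\tfrac12$). For general $b\in\R$, including the delicate regime $b<\tfrac12$, I would instead isolate the second-chaos (variance) contribution, restrict the outer $(\la_1,\la_1')$-integration to a bounded region so that the weights are frozen, and run a positivity argument analogous to the $c_0>0$ computation at the end of the proof of Proposition~\ref{prop:q-sharp-optim-intro}. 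Once positivity of the Wick contraction is secured on such a test region, the remaining quantity reduces to a double spectral integral over $[-2^n,2^n]^2$ with a bounded-below, non-integrable integrand, whence divergence.

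The main obstacle is precisely this small-$b$ part of item $(ii)$: one must rule out possible cancellations between the first-moment and variance pieces, and construct an explicit region in the $(\la,\la_1,\la_1',\xi,\xi')$ parameter space on which a concrete Wick contraction remains positive uniformly in $n$, in the same spirit as (but more delicate than) the localized lower-bound arguments used in Propositions~\ref{prop:q-sharp-optim-intro} and~\ref{prop:p-p-q}.
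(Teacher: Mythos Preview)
The paper does not prove this proposition: the line immediately following the statement reads ``For the sake of conciseness, we prefer to leave the proof of this assertion as an exercise to the reader.'' There is therefore nothing in the paper to compare your proposal against.

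That said, your approach is the natural one and mirrors the techniques of Sections~\ref{sec:schrod-prod-tree}--\ref{sec:item-ii}. Your identification of $K^{(n)}$ (with the fractional weight $|\xi|^{(1-2H)/2}$ read into the spectral measure, which the paper's informal notation in \eqref{appro-wn} indeed requires) and the bound on $\cj_\chi$ are correct; in fact $\cj_\chi$ coincides with $\ci_\chi$ up to a unimodular constant, so \eqref{ker-i} applies verbatim. The Wick decomposition and the reduction of item~$(i)$ to integrability of $|\xi|^{1-2H}\langle\xi\rangle^{2b-2}$ at infinity are right, and the threshold $b<H$ falls out exactly as you describe.

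For item~$(ii)$, one simplification is worth pointing out: there is no cancellation to rule out. Since $N^{(n)}$ lies in the (inhomogeneous) second Wiener chaos, one has $\mathbb{E}\big[|N^{(n)}|^2\big]=|\mathbb{E} N^{(n)}|^2+\mathrm{Var}(N^{(n)})$ with both summands nonnegative, so a lower bound on either one already suffices. Your first-moment argument covers $b\geq\tfrac12$; for smaller $b$ your plan to exploit the variance term, localized in $(\la_1,\la_1')$ and combined with a positivity check in the spirit of the $c_0>0$ computation in Section~\ref{sec:prod-restr-space}, is the correct route and is less delicate than you fear once the cancellation worry is removed.
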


\cop

\begin{proof}
We only give an account of the phenomenon occurring at the Brownian threshold - that is, concerning the proof of item $(ii)$. Thus, we fix $H=\frac12$.

\smallskip

Suppose, aiming for a contradiction, that there exists $b\in \R$ such that
\begin{equation}\label{futur-contradic}
\sup_{n\geq 0} \mathbb{E}\Big[\cp_b^{(n)}\Big] <\infty.
\end{equation}

\

Let us first prove that one has necessarily $b< 1$. Indeed, for $z:=\chi \in \bigcap_{b\in \R} X^b$,
\begin{align*}
\mathbb{E}\Big[\cp_{1}^{(n)}\Big]& \gtrsim \mathbb{E}\Big[ \big\| \cl^{(n)}(\chi)\big\|_{X^{1}}^2\Big] \geq \mathbb{E}\Big[ \big\| \partial_t \big(\cl^{(n)}(\chi)\big)\big\|_{L^2}^2\Big]\\
&\geq\int_{\R} dt \,\mathbb{E}\bigg[ \Big| \chi(t)^3 \dot{B}^{(n)}(t)+\chi'(t)\int_0^t ds \, \chi(s)^2\dot{B}^{(n)}(s)\Big|^2\bigg]\\
&\geq\int_{\R} dt \, \chi(t)^6 \mathbb{E}\Big[ \big|\dot{B}^{(n)}(t)\big|^2\Big]+2\, Re\bigg( \int_{\R} dt \,  \chi(t)^3 \chi'(t)\int_0^t ds \, \chi(s)^2\mathbb{E}\Big[\dot{B}^{(n)}(t)\overline{\dot{B}^{(n)}(s)}\Big]\bigg).
\end{align*}
Using the expression
\begin{equation}\label{expr-cov-bro}
\mathbb{E}\Big[ \dot{B}^{(n)}(s)\overline{\dot{B}^{(n)}(s')}\Big]=\mathbb{E}\Big[ \dot{B}^{(n)}(s)\dot{B}^{(n)}(s')\Big]=c \int_{\{|\xi|\leq 2^n \}}d\xi\, e^{-\imath \xi(s-s')},
\end{equation}
we can then decompose the second term as
\small
\begin{align*}
&2\, Re\bigg( \int_{\R} dt \,  \chi(t)^3 \chi'(t)\int_0^t ds \, \chi(s)^2\mathbb{E}\Big[\dot{B}^{(n)}(t)\overline{\dot{B}^{(n)}(s)}\Big]\bigg)\\
&= 2 \,  Re\bigg( \int_{\{|\xi|\leq 2^n\}} d\xi \int_{\R} dt \,  e^{-\imath \xi t}\chi(t)^3 \chi'(t)\int_0^t ds \, e^{\imath \xi s} \chi(s)^2\bigg)\\
&=2\, \int_{\{|\xi|\leq 1\}} d\xi \int_{\R} dt \,  e^{-\imath \xi t}\chi(t)^3 \chi'(t)\int_0^t ds \, e^{\imath \xi s} \chi(s)^2 -2\imath \int_{\{1\leq |\xi|\leq 2^n\}} \frac{d\xi}{\xi} \int_{\R} dt \,  e^{-\imath \xi t}\partial_t(\chi^3 \chi')(t) \int_0^t ds \, e^{\imath \xi s} \chi(s)^2,
\end{align*}
\normalsize
which allows us to assert that
\begin{align*}
&\sup_{n\geq 1} \bigg|2 \, Re\bigg( \int_{\R} dt \,  \chi(t)^3 \chi'(t)\int_0^t ds \, \chi(s)^2\mathbb{E}\Big[\dot{B}^{(n)}(t)\overline{\dot{B}^{(n)}(s)}\Big]\bigg) \bigg|\lesssim 1+\int_1^\infty \frac{d\xi}{\xi^2} < \infty.
\end{align*}
On the other hand, one has of course
\begin{align*}
&\int_{\R} dt \, \chi(t)^6 \mathbb{E}\Big[ \big|\dot{B}^{(n)}(t)\big|^2\Big]=\bigg(\int_{\R} dt \, \chi(t)^6\bigg) \bigg(\int_{\{|\xi|\leq 2^n\}}d\xi \bigg) \stackrel{n\to\infty}{\longrightarrow} \infty,
\end{align*}
which achieves to prove that 
$$\mathbb{E}\Big[\cp_{1}^{(n)}\Big] \stackrel{n\to\infty}{\longrightarrow} \infty,$$
and accordingly, if \eqref{futur-contradic} holds true, one must have $b< 1$.

\

Assume now that $\frac12\leq  b < 1$. Still focusing on the test-function $z:=\chi$, we can write using \eqref{expr-cov-bro}
\begin{align*}
&\mathbb{E}\Big[\cp_{b}^{(n)}\Big] \gtrsim \mathbb{E} \Big[\big\| \cl^{(n)}(\chi)\big\|_{X^b}^2 \Big]\geq \int_1^\infty d\la \, |\la|^{2b} \mathbb{E}\Big[ \big|\cf\big(\cl^{(n)}(\chi)\big)(\la)\big|^2\Big]\\
&\geq \int_1^\infty d\la \, |\la|^{2b}\int dt dt' \, e^{-\imath \la (t-t')}\chi(t)\chi(t') \int_0^t ds \int_0^{t'} ds' \, \chi(s)^2\chi(s')^2 \mathbb{E}\Big[ \dot{B}^{(n)}(s)\overline{\dot{B}^{(n)}(s')}\Big]\\
&\geq \int_1^\infty d\la \, |\la|^{2b}\int _{\{|\xi| \leq 2^n\}} d\xi \, \bigg(\int dt \, e^{-\imath \la t} \chi(t) \int_0^t ds \, \chi(s)^2 e^{-\imath s\xi}\bigg) \bigg(\int dt' \, e^{\imath \la t'} \chi(t') \int_0^{t'} ds' \, \chi(s')^2 e^{\imath s'\xi}\bigg),
\end{align*}
which, by elementary integrations by parts, entails
\begin{align*}
&\mathbb{E} \Big[\big\| \cl^{(n)}(\chi)\big\|_{X^b}^2 \Big]\gtrsim M^{(n)}_b+\int_1^\infty \frac{d\la}{|\la|^{2-2b}}\int _{\{|\xi| \leq 2^n\}} d\xi \, R(\la,\xi),
\end{align*}
where
\begin{align*}
M^{(n)}_b:=\int_1^\infty \frac{d\la}{|\la|^{2-2b}}\int _{\{|\xi| \leq 2^n\}} d\xi \, \bigg(\int dt \, e^{-\imath (\la+\xi) t} \chi(t)^3 \bigg) \bigg(\int dt' \, e^{\imath (\la+\xi) t'} \chi(t')^3 \bigg)
\end{align*}
and $R$ satisfies (thanks to \eqref{ker-i})
\begin{align*}
|R(\la,\xi)|\lesssim \frac{1}{\langle \la \rangle \langle \la+\xi\rangle^2}.
\end{align*}
Based on the latter estimate, it is clear that
\begin{align*}
\sup_{n\geq 1} \bigg|\int_1^\infty \frac{d\la}{|\la|^{2-2b}}\int _{\{|\xi| \leq 2^n\}} d\xi \, R(\la,\xi)\bigg|\lesssim \int_1^\infty \frac{d\la}{|\la|^{3-2b}}\int _{\R} \frac{d\xi}{\langle \xi\rangle^2} <\infty,
\end{align*}
due to $\frac12\leq b<1$. The contradiction now comes from the fact that
\begin{align*}
M^{(n)}_b\stackrel{n\to\infty}{\longrightarrow}\int_1^\infty \frac{d\la}{|\la|^{2-2b}}\bigg(\int dt \, \chi(t)^6 \bigg),
\end{align*}
which can only be finite if $b<\frac12$.

\

Next, to rule out the case $b<\frac14$, let us start from the inequality
\begin{align*}
\cp_b^{(n)}\gtrsim \int_1^\infty \frac{d\la_1}{|\la_1|^{2b}}\int_1^\infty \frac{d\la_1'}{|\la_1'|^{2b}} \,\bigg|\int_1^2 d\la \, |\la|^{2b}\, K^{(n)}(\la,\la_1)\overline{K^{(n)}(\la,\la_1')}\bigg|^2
\end{align*}
which yields
\begin{align}
\mathbb{E}\Big[\cp_b^{(n)}\Big]\gtrsim \int_1^\infty \frac{d\la_1}{|\la_1|^{2b}}\int_1^\infty \frac{d\la_1'}{|\la_1'|^{2b}} \,\bigg|\int_1^2 d\la \, |\la|^{2b}\, \mathbb{E}\Big[ K^{(n)}(\la,\la_1)\overline{K^{(n)}(\la,\la_1')}\Big]\bigg|^2. \label{pn-14}
\end{align}
Combining \eqref{kernel-integration-simple} and \eqref{expr-cov-bro}, we obtain that for all $\la,\la',\la_1,\la_1'\in \R$,
\begin{align}
&\mathbb{E}\Big[K^{(n)}(\la,\la_1)\overline{K^{(n)}(\la',\la_1')}\Big]\nonumber\\
&=c \int_{\{|\xi|\leq 2^n \}}d\xi\, \bigg(\int dt \, e^{-\imath \la t}\chi(t)\int_0^t ds \, e^{\imath (\la_1-\xi) s} \chi(s)\bigg) \bigg( \int dt' \, e^{\imath \la' t'}\chi(t')\int_0^{t'} ds' \, e^{-\imath (\la_1'-\xi) s'} \chi(s')\bigg).\label{covari-k-k}
\end{align}
From this expression, we can see in particular that
\begin{align}
&\mathbb{E}\Big[K^{(n)}(\la,\la_1)\overline{K^{(n)}(\la',\la_1')}\Big] \stackrel{n\to \infty}{\longrightarrow} F(\la,\la',\la_1'-\la_1),\label{intr-lim-f}
\end{align}
where
\begin{align*}
&F(\la,\la',\beta):=c\int dt \, e^{-\imath \la t}\chi(t)\int dt' \, e^{\imath \la' t'}\chi(t')\int_0^t ds \, e^{-\imath \beta  s}  \chi(s)^2\1_{[0,t']}(s) .
\end{align*}
Going back to \eqref{pn-14}, we deduce that
\begin{align*}
\sup_{n\geq 1}\mathbb{E}\Big[\cp_b^{(n)}\Big]\gtrsim \int_1^\infty \frac{d\la_1}{|\la_1|^{2b}}\int_1^\infty \frac{d\la_1'}{|\la_1'|^{2b}} \big| G_b(\la_1'-\la_1)\big|^2, \quad \text{with} \ G_b(\beta):=\int_1^2 d\la \, |\la|^{2b}\, F(\la,\la,\beta) .
\end{align*} 
Therefore
\begin{align*}
\sup_{n\geq 1}\mathbb{E}\Big[\cp_b^{(n)}\Big]&\gtrsim \int_1^\infty \frac{d\la_1}{|\la_1|^{2b}}\int_{\la_1+1}^{\la_1+2} \frac{d\la_1'}{|\la_1'|^{2b}} \big| G_b(\la_1'-\la_1)\big|^2\\
&\gtrsim \int_{1}^{2} d\rho\, \big| G_b(\rho)\big|^2 \int_1^\infty \frac{d\la_1}{|\la_1|^{2b}|\la_1+\rho|^{2b}}\gtrsim \int_{1}^{2} d\rho\, \big| G_b(\rho)\big|^2 \int_1^\infty \frac{d\la_1}{|\la_1|^{2b}|\la_1+2|^{2b}},
\end{align*} 
and in light of the hypothesis \eqref{futur-contradic}, we can conclude that $b>\frac14$.

\

It only remains us to exclude the possibility that $\frac14< b < \frac12$. To this end, we start from
\begin{align*}
\cp_b^{(n)}\gtrsim \widetilde{\cp_b}^{(n)}:=\int_1^\infty \frac{d\la_1}{|\la_1|^{2b}}\int_1^\infty \frac{d\la_1'}{|\la_1'|^{2b}} \,\bigg|\int_1^\infty d\la \, |\la|^{2b}\, K^{(n)}(\la,\la_1)\overline{K^{(n)}(\la,\la_1')}\bigg|^2.
\end{align*}
By applying Wick's formula, we obtain the decomposition
\begin{align*}
\mathbb{E}\Big[\widetilde{\cp_b}^{(n)}\Big]&=\int_1^\infty \frac{d\la_1}{|\la_1|^{2b}}\int_1^\infty \frac{d\la_1'}{|\la_1'|^{2b}} \int_1^\infty d\la \, |\la|^{2b} \int_1^\infty d\la' \, |\la'|^{2b}\, \mathbb{E}\Big[K^{(n)}(\la,\la_1)\overline{K^{(n)}(\la,\la_1')}\overline{K^{(n)}(\la',\la_1)}K^{(n)}(\la',\la_1')\Big]\\
&=I^{(n)}_b+II^{(n)}_b+III^{(n)}_b,
\end{align*}
with
\begin{align*}
I^{(n)}_b:=\int_1^\infty \frac{d\la_1}{|\la_1|^{2b}}\int_1^\infty \frac{d\la_1'}{|\la_1'|^{2b}}\bigg| \int_1^\infty d\la \, |\la|^{2b} \, \mathbb{E}\Big[K^{(n)}(\la,\la_1)\overline{K^{(n)}(\la,\la_1')}\Big]\bigg|^2,
\end{align*}
\begin{align*}
II^{(n)}_b:=\int_1^\infty \frac{d\la_1}{|\la_1|^{2b}}\int_1^\infty \frac{d\la_1'}{|\la_1'|^{2b}} \int_1^\infty d\la \, |\la|^{2b} \int_1^\infty d\la' \, |\la'|^{2b}\, \mathbb{E}\Big[K^{(n)}(\la,\la_1)\overline{K^{(n)}(\la',\la_1)}\Big]\mathbb{E}\Big[\overline{K^{(n)}(\la,\la_1')}K^{(n)}(\la',\la_1')\Big],
\end{align*}
\begin{align*}
III^{(n)}_b:=\int_1^\infty \frac{d\la_1}{|\la_1|^{2b}}\int_1^\infty \frac{d\la_1'}{|\la_1'|^{2b}} \int_1^\infty d\la \, |\la|^{2b} \int_1^\infty d\la' \, |\la'|^{2b}\, \mathbb{E}\Big[K^{(n)}(\la,\la_1)K^{(n)}(\la',\la_1')\Big] \mathbb{E}\Big[\overline{K^{(n)}(\la,\la_1')}\overline{K^{(n)}(\la',\la_1)}\Big].
\end{align*}
Note in particular that $I^{(n)}_b\geq 0$, and so
\begin{align}
\mathbb{E}\Big[\widetilde{\cp_b}^{(n)}\Big]&\geq II^{(n)}_b+III^{(n)}_b.\label{iin-iiin}
\end{align}
Similarly to \eqref{covari-k-k}, one has
\begin{align*}
&\mathbb{E}\Big[K^{(n)}(\la,\la_1)K^{(n)}(\la',\la_1')\Big]\\
&=c \int_{\{|\xi|\leq 2^n \}}d\xi\, \bigg(\int dt \, e^{-\imath \la t}\chi(t)\int_0^t ds \, e^{\imath (\la_1-\xi) s} \chi(s)\bigg) \bigg( \int dt' \, e^{-\imath \la' t'}\chi(t')\int_0^{t'} ds' \, e^{\imath (\la_1'+\xi) s'} \chi(s')\bigg),
\end{align*}
from which we easily deduce, using \eqref{ker-i} and Lemma \eqref{lem:gtv}, 
\begin{align*}
&\Big|\mathbb{E}\Big[K^{(n)}(\la,\la_1)K^{(n)}(\la',\la_1')\Big]\Big|\\
&\lesssim  \int_{\R}d\xi\, \bigg|\int dt \, e^{-\imath \la t}\chi(t)\int_0^t ds \, e^{\imath (\la_1-\xi) s} \chi(s)\bigg| \bigg| \int dt' \, e^{-\imath \la' t'}\chi(t')\int_0^{t'} ds' \, e^{\imath (\la_1'+\xi) s'} \chi(s')\bigg|\\
&\lesssim \frac{1}{\langle \la\rangle \langle \la'\rangle} \int_{\R}d\xi\, \frac{1}{\langle \xi+\la-\la_1\rangle } \frac{1}{\langle \xi-\la'+\la_1' \rangle}\lesssim \frac{1}{\langle \la\rangle \langle \la'\rangle} \frac{1}{\langle (\la_1+\la_1')-(\la+\la') \rangle^{1-\varepsilon}},
\end{align*}
for any $\varepsilon >0$. Injecting this bound into $III^{(n)}_b$, we obtain the uniform estimate
\begin{align*}
\sup_{n\geq 1}\big| III^{(n)}_b \big| \lesssim \int_{\R} \frac{d\la }{\langle \la\rangle^{2-2b}}  \int_{\R} \frac{d\la' }{ \langle \la'\rangle^{2-2b}} \int_{\R} \frac{d\la_1}{\langle \la_1\rangle^{2b}}\int_{\R} \frac{d\la_1'}{\langle \la_1'\rangle^{2b}} \frac{1}{\langle (\la_1+\la_1')-(\la+\la') \rangle^{2-2\varepsilon}},
\end{align*}
and by applying Lemma \ref{lem:gtv} twice, we get
\begin{align*}
\sup_{n\geq 1}\big| III^{(n)}_b \big| &\lesssim \int_{\R} \frac{d\la }{\langle \la\rangle^{2-2b}}  \int_{\R} \frac{d\la' }{ \langle \la'\rangle^{2-2b}} \int_{\R} \frac{d\la_1}{\langle \la_1\rangle^{2b}} \frac{1}{\langle \la_1-(\la+\la') \rangle^{2b}}\\
&\lesssim \int_{\R} \frac{d\la }{\langle \la\rangle^{2-2b}}  \int_{\R} \frac{d\la' }{ \langle \la'\rangle^{2-2b}} \frac{1}{\langle \la+\la' \rangle^{4b-1}} <\infty,
\end{align*}
where we have used the fact that $\frac14<b<\frac12$.

\smallskip

Finally, to handle the quantity $II^{(n)}_b$, we can use \eqref{intr-lim-f} and assert that
\begin{align*}
&\liminf_{n\to \infty} II^{(n)}_b \geq \bigg(\int_1^\infty \frac{d\la_1}{|\la_1|^{2b}}\int_1^\infty \frac{d\la_1'}{|\la_1'|^{2b}}\bigg) \bigg( \int_1^\infty d\la \, |\la|^{2b} \int_1^\infty d\la' \, |\la'|^{2b}\, \big| F(\la,\la',0)\big|^2\bigg).
\end{align*}
Of course the latter limit is positive and can only be finite for $b>\frac12$, which, going back to \eqref{iin-iiin}, achieves to prove the contradiction. In other words, we cannot find any $b\in \R$ such that \eqref{futur-contradic} holds true.

\end{proof}

\

\color{blue}

\noindent
{ \textbf{Acknowledgements.} I am deeply grateful to the two anonymous reviewers who have meticulously overseen this study. Their numerous comments have significantly contributed to improving both the presentation and the substance of the article. I also wish to thank them for bringing to my attention several additional publications related to this topic, which have enriched the bibliography accordingly}.

\color{black}

\bigskip

\bigskip

\begingroup

\endgroup

\end{document}